\definecolor{AliceBlue}{rgb}{0.94,0.97,1.00}
\definecolor{AntiqueWhite1}{rgb}{1.00,0.94,0.86}
\definecolor{AntiqueWhite2}{rgb}{0.93,0.87,0.80}
\definecolor{AntiqueWhite3}{rgb}{0.80,0.75,0.69}
\definecolor{AntiqueWhite4}{rgb}{0.55,0.51,0.47}
\definecolor{AntiqueWhite}{rgb}{0.98,0.92,0.84}
\definecolor{BlanchedAlmond}{rgb}{1.00,0.92,0.80}
\definecolor{BlueViolet}{rgb}{0.54,0.17,0.89}
\definecolor{CadetBlue1}{rgb}{0.60,0.96,1.00}
\definecolor{CadetBlue2}{rgb}{0.56,0.90,0.93}
\definecolor{CadetBlue3}{rgb}{0.48,0.77,0.80}
\definecolor{CadetBlue4}{rgb}{0.33,0.53,0.55}
\definecolor{CadetBlue}{rgb}{0.37,0.62,0.63}
\definecolor{CornflowerBlue}{rgb}{0.39,0.58,0.93}
\definecolor{DarkBlue}{rgb}{0.00,0.00,0.55}
\definecolor{DarkCyan}{rgb}{0.00,0.55,0.55}
\definecolor{DarkGoldenrod1}{rgb}{1.00,0.73,0.06}
\definecolor{DarkGoldenrod2}{rgb}{0.93,0.68,0.05}
\definecolor{DarkGoldenrod3}{rgb}{0.80,0.58,0.05}
\definecolor{DarkGoldenrod4}{rgb}{0.55,0.40,0.03}
\definecolor{DarkGoldenrod}{rgb}{0.72,0.53,0.04}
\definecolor{DarkGray}{rgb}{0.66,0.66,0.66}
\definecolor{DarkGreen}{rgb}{0.00,0.39,0.00}
\definecolor{DarkGrey}{rgb}{0.66,0.66,0.66}
\definecolor{DarkKhaki}{rgb}{0.74,0.72,0.42}
\definecolor{DarkMagenta}{rgb}{0.55,0.00,0.55}
\definecolor{DarkOliveGreen1}{rgb}{0.79,1.00,0.44}
\definecolor{DarkOliveGreen2}{rgb}{0.74,0.93,0.41}
\definecolor{DarkOliveGreen3}{rgb}{0.64,0.80,0.35}
\definecolor{DarkOliveGreen4}{rgb}{0.43,0.55,0.24}
\definecolor{DarkOliveGreen}{rgb}{0.33,0.42,0.18}
\definecolor{DarkOrange1}{rgb}{1.00,0.50,0.00}
\definecolor{DarkOrange2}{rgb}{0.93,0.46,0.00}
\definecolor{DarkOrange3}{rgb}{0.80,0.40,0.00}
\definecolor{DarkOrange4}{rgb}{0.55,0.27,0.00}
\definecolor{DarkOrange}{rgb}{1.00,0.55,0.00}
\definecolor{DarkOrchid1}{rgb}{0.75,0.24,1.00}
\definecolor{DarkOrchid2}{rgb}{0.70,0.23,0.93}
\definecolor{DarkOrchid3}{rgb}{0.60,0.20,0.80}
\definecolor{DarkOrchid4}{rgb}{0.41,0.13,0.55}
\definecolor{DarkOrchid}{rgb}{0.60,0.20,0.80}
\definecolor{DarkRed}{rgb}{0.55,0.00,0.00}
\definecolor{DarkSalmon}{rgb}{0.91,0.59,0.48}
\definecolor{DarkSeaGreen1}{rgb}{0.76,1.00,0.76}
\definecolor{DarkSeaGreen2}{rgb}{0.71,0.93,0.71}
\definecolor{DarkSeaGreen3}{rgb}{0.61,0.80,0.61}
\definecolor{DarkSeaGreen4}{rgb}{0.41,0.55,0.41}
\definecolor{DarkSeaGreen}{rgb}{0.56,0.74,0.56}
\definecolor{DarkSlateBlue}{rgb}{0.28,0.24,0.55}
\definecolor{DarkSlateGray1}{rgb}{0.59,1.00,1.00}
\definecolor{DarkSlateGray2}{rgb}{0.55,0.93,0.93}
\definecolor{DarkSlateGray3}{rgb}{0.47,0.80,0.80}
\definecolor{DarkSlateGray4}{rgb}{0.32,0.55,0.55}
\definecolor{DarkSlateGray}{rgb}{0.18,0.31,0.31}
\definecolor{DarkSlateGrey}{rgb}{0.18,0.31,0.31}
\definecolor{DarkTurquoise}{rgb}{0.00,0.81,0.82}
\definecolor{DarkViolet}{rgb}{0.58,0.00,0.83}
\definecolor{DeepPink1}{rgb}{1.00,0.08,0.58}
\definecolor{DeepPink2}{rgb}{0.93,0.07,0.54}
\definecolor{DeepPink3}{rgb}{0.80,0.06,0.46}
\definecolor{DeepPink4}{rgb}{0.55,0.04,0.31}
\definecolor{DeepPink}{rgb}{1.00,0.08,0.58}
\definecolor{DeepSkyBlue1}{rgb}{0.00,0.75,1.00}
\definecolor{DeepSkyBlue2}{rgb}{0.00,0.70,0.93}
\definecolor{DeepSkyBlue3}{rgb}{0.00,0.60,0.80}
\definecolor{DeepSkyBlue4}{rgb}{0.00,0.41,0.55}
\definecolor{DeepSkyBlue}{rgb}{0.00,0.75,1.00}
\definecolor{DimGray}{rgb}{0.41,0.41,0.41}
\definecolor{DimGrey}{rgb}{0.41,0.41,0.41}
\definecolor{DodgerBlue1}{rgb}{0.12,0.56,1.00}
\definecolor{DodgerBlue2}{rgb}{0.11,0.53,0.93}
\definecolor{DodgerBlue3}{rgb}{0.09,0.45,0.80}
\definecolor{DodgerBlue4}{rgb}{0.06,0.31,0.55}
\definecolor{DodgerBlue}{rgb}{0.12,0.56,1.00}
\definecolor{FloralWhite}{rgb}{1.00,0.98,0.94}
\definecolor{ForestGreen}{rgb}{0.13,0.55,0.13}
\definecolor{GhostWhite}{rgb}{0.97,0.97,1.00}
\definecolor{GreenYellow}{rgb}{0.68,1.00,0.18}
\definecolor{HotPink1}{rgb}{1.00,0.43,0.71}
\definecolor{HotPink2}{rgb}{0.93,0.42,0.65}
\definecolor{HotPink3}{rgb}{0.80,0.38,0.56}
\definecolor{HotPink4}{rgb}{0.55,0.23,0.38}
\definecolor{HotPink}{rgb}{1.00,0.41,0.71}
\definecolor{IndianRed1}{rgb}{1.00,0.42,0.42}
\definecolor{IndianRed2}{rgb}{0.93,0.39,0.39}
\definecolor{IndianRed3}{rgb}{0.80,0.33,0.33}
\definecolor{IndianRed4}{rgb}{0.55,0.23,0.23}
\definecolor{IndianRed}{rgb}{0.80,0.36,0.36}
\definecolor{LavenderBlush1}{rgb}{1.00,0.94,0.96}
\definecolor{LavenderBlush2}{rgb}{0.93,0.88,0.90}
\definecolor{LavenderBlush3}{rgb}{0.80,0.76,0.77}
\definecolor{LavenderBlush4}{rgb}{0.55,0.51,0.53}
\definecolor{LavenderBlush}{rgb}{1.00,0.94,0.96}
\definecolor{LawnGreen}{rgb}{0.49,0.99,0.00}
\definecolor{LemonChiffon1}{rgb}{1.00,0.98,0.80}
\definecolor{LemonChiffon2}{rgb}{0.93,0.91,0.75}
\definecolor{LemonChiffon3}{rgb}{0.80,0.79,0.65}
\definecolor{LemonChiffon4}{rgb}{0.55,0.54,0.44}
\definecolor{LemonChiffon}{rgb}{1.00,0.98,0.80}
\definecolor{LightBlue1}{rgb}{0.75,0.94,1.00}
\definecolor{LightBlue2}{rgb}{0.70,0.87,0.93}
\definecolor{LightBlue3}{rgb}{0.60,0.75,0.80}
\definecolor{LightBlue4}{rgb}{0.41,0.51,0.55}
\definecolor{LightBlue}{rgb}{0.68,0.85,0.90}
\definecolor{LightCoral}{rgb}{0.94,0.50,0.50}
\definecolor{LightCyan1}{rgb}{0.88,1.00,1.00}
\definecolor{LightCyan2}{rgb}{0.82,0.93,0.93}
\definecolor{LightCyan3}{rgb}{0.71,0.80,0.80}
\definecolor{LightCyan4}{rgb}{0.48,0.55,0.55}
\definecolor{LightCyan}{rgb}{0.88,1.00,1.00}
\definecolor{LightGoldenrod1}{rgb}{1.00,0.93,0.55}
\definecolor{LightGoldenrod2}{rgb}{0.93,0.86,0.51}
\definecolor{LightGoldenrod3}{rgb}{0.80,0.75,0.44}
\definecolor{LightGoldenrod4}{rgb}{0.55,0.51,0.30}
\definecolor{LightGoldenrodYellow}{rgb}{0.98,0.98,0.82}
\definecolor{LightGoldenrod}{rgb}{0.93,0.87,0.51}
\definecolor{LightGray}{rgb}{0.83,0.83,0.83}
\definecolor{LightGreen}{rgb}{0.56,0.93,0.56}
\definecolor{LightGrey}{rgb}{0.83,0.83,0.83}
\definecolor{LightPink1}{rgb}{1.00,0.68,0.73}
\definecolor{LightPink2}{rgb}{0.93,0.64,0.68}
\definecolor{LightPink3}{rgb}{0.80,0.55,0.58}
\definecolor{LightPink4}{rgb}{0.55,0.37,0.40}
\definecolor{LightPink}{rgb}{1.00,0.71,0.76}
\definecolor{LightSalmon1}{rgb}{1.00,0.63,0.48}
\definecolor{LightSalmon2}{rgb}{0.93,0.58,0.45}
\definecolor{LightSalmon3}{rgb}{0.80,0.51,0.38}
\definecolor{LightSalmon4}{rgb}{0.55,0.34,0.26}
\definecolor{LightSalmon}{rgb}{1.00,0.63,0.48}
\definecolor{LightSeaGreen}{rgb}{0.13,0.70,0.67}
\definecolor{LightSkyBlue1}{rgb}{0.69,0.89,1.00}
\definecolor{LightSkyBlue2}{rgb}{0.64,0.83,0.93}
\definecolor{LightSkyBlue3}{rgb}{0.55,0.71,0.80}
\definecolor{LightSkyBlue4}{rgb}{0.38,0.48,0.55}
\definecolor{LightSkyBlue}{rgb}{0.53,0.81,0.98}
\definecolor{LightSlateBlue}{rgb}{0.52,0.44,1.00}
\definecolor{LightSlateGray}{rgb}{0.47,0.53,0.60}
\definecolor{LightSlateGrey}{rgb}{0.47,0.53,0.60}
\definecolor{LightSteelBlue1}{rgb}{0.79,0.88,1.00}
\definecolor{LightSteelBlue2}{rgb}{0.74,0.82,0.93}
\definecolor{LightSteelBlue3}{rgb}{0.64,0.71,0.80}
\definecolor{LightSteelBlue4}{rgb}{0.43,0.48,0.55}
\definecolor{LightSteelBlue}{rgb}{0.69,0.77,0.87}
\definecolor{LightYellow1}{rgb}{1.00,1.00,0.88}
\definecolor{LightYellow2}{rgb}{0.93,0.93,0.82}
\definecolor{LightYellow3}{rgb}{0.80,0.80,0.71}
\definecolor{LightYellow4}{rgb}{0.55,0.55,0.48}
\definecolor{LightYellow}{rgb}{1.00,1.00,0.88}
\definecolor{LimeGreen}{rgb}{0.20,0.80,0.20}
\definecolor{MediumAquamarine}{rgb}{0.40,0.80,0.67}
\definecolor{MediumBlue}{rgb}{0.00,0.00,0.80}
\definecolor{MediumOrchid1}{rgb}{0.88,0.40,1.00}
\definecolor{MediumOrchid2}{rgb}{0.82,0.37,0.93}
\definecolor{MediumOrchid3}{rgb}{0.71,0.32,0.80}
\definecolor{MediumOrchid4}{rgb}{0.48,0.22,0.55}
\definecolor{MediumOrchid}{rgb}{0.73,0.33,0.83}
\definecolor{MediumPurple1}{rgb}{0.67,0.51,1.00}
\definecolor{MediumPurple2}{rgb}{0.62,0.47,0.93}
\definecolor{MediumPurple3}{rgb}{0.54,0.41,0.80}
\definecolor{MediumPurple4}{rgb}{0.36,0.28,0.55}
\definecolor{MediumPurple}{rgb}{0.58,0.44,0.86}
\definecolor{MediumSeaGreen}{rgb}{0.24,0.70,0.44}
\definecolor{MediumSlateBlue}{rgb}{0.48,0.41,0.93}
\definecolor{MediumSpringGreen}{rgb}{0.00,0.98,0.60}
\definecolor{MediumTurquoise}{rgb}{0.28,0.82,0.80}
\definecolor{MediumVioletRed}{rgb}{0.78,0.08,0.52}
\definecolor{MidnightBlue}{rgb}{0.10,0.10,0.44}
\definecolor{MintCream}{rgb}{0.96,1.00,0.98}
\definecolor{MistyRose1}{rgb}{1.00,0.89,0.88}
\definecolor{MistyRose2}{rgb}{0.93,0.84,0.82}
\definecolor{MistyRose3}{rgb}{0.80,0.72,0.71}
\definecolor{MistyRose4}{rgb}{0.55,0.49,0.48}
\definecolor{MistyRose}{rgb}{1.00,0.89,0.88}
\definecolor{NavajoWhite1}{rgb}{1.00,0.87,0.68}
\definecolor{NavajoWhite2}{rgb}{0.93,0.81,0.63}
\definecolor{NavajoWhite3}{rgb}{0.80,0.70,0.55}
\definecolor{NavajoWhite4}{rgb}{0.55,0.47,0.37}
\definecolor{NavajoWhite}{rgb}{1.00,0.87,0.68}
\definecolor{NavyBlue}{rgb}{0.00,0.00,0.50}
\definecolor{OldLace}{rgb}{0.99,0.96,0.90}
\definecolor{OliveDrab1}{rgb}{0.75,1.00,0.24}
\definecolor{OliveDrab2}{rgb}{0.70,0.93,0.23}
\definecolor{OliveDrab3}{rgb}{0.60,0.80,0.20}
\definecolor{OliveDrab4}{rgb}{0.41,0.55,0.13}
\definecolor{OliveDrab}{rgb}{0.42,0.56,0.14}
\definecolor{OrangeRed1}{rgb}{1.00,0.27,0.00}
\definecolor{OrangeRed2}{rgb}{0.93,0.25,0.00}
\definecolor{OrangeRed3}{rgb}{0.80,0.22,0.00}
\definecolor{OrangeRed4}{rgb}{0.55,0.15,0.00}
\definecolor{OrangeRed}{rgb}{1.00,0.27,0.00}
\definecolor{PaleGoldenrod}{rgb}{0.93,0.91,0.67}
\definecolor{PaleGreen1}{rgb}{0.60,1.00,0.60}
\definecolor{PaleGreen2}{rgb}{0.56,0.93,0.56}
\definecolor{PaleGreen3}{rgb}{0.49,0.80,0.49}
\definecolor{PaleGreen4}{rgb}{0.33,0.55,0.33}
\definecolor{PaleGreen}{rgb}{0.60,0.98,0.60}
\definecolor{PaleTurquoise1}{rgb}{0.73,1.00,1.00}
\definecolor{PaleTurquoise2}{rgb}{0.68,0.93,0.93}
\definecolor{PaleTurquoise3}{rgb}{0.59,0.80,0.80}
\definecolor{PaleTurquoise4}{rgb}{0.40,0.55,0.55}
\definecolor{PaleTurquoise}{rgb}{0.69,0.93,0.93}
\definecolor{PaleVioletRed1}{rgb}{1.00,0.51,0.67}
\definecolor{PaleVioletRed2}{rgb}{0.93,0.47,0.62}
\definecolor{PaleVioletRed3}{rgb}{0.80,0.41,0.54}
\definecolor{PaleVioletRed4}{rgb}{0.55,0.28,0.36}
\definecolor{PaleVioletRed}{rgb}{0.86,0.44,0.58}
\definecolor{PapayaWhip}{rgb}{1.00,0.94,0.84}
\definecolor{PeachPuff1}{rgb}{1.00,0.85,0.73}
\definecolor{PeachPuff2}{rgb}{0.93,0.80,0.68}
\definecolor{PeachPuff3}{rgb}{0.80,0.69,0.58}
\definecolor{PeachPuff4}{rgb}{0.55,0.47,0.40}
\definecolor{PeachPuff}{rgb}{1.00,0.85,0.73}
\definecolor{PowderBlue}{rgb}{0.69,0.88,0.90}
\definecolor{RosyBrown1}{rgb}{1.00,0.76,0.76}
\definecolor{RosyBrown2}{rgb}{0.93,0.71,0.71}
\definecolor{RosyBrown3}{rgb}{0.80,0.61,0.61}
\definecolor{RosyBrown4}{rgb}{0.55,0.41,0.41}
\definecolor{RosyBrown}{rgb}{0.74,0.56,0.56}
\definecolor{RoyalBlue1}{rgb}{0.28,0.46,1.00}
\definecolor{RoyalBlue2}{rgb}{0.26,0.43,0.93}
\definecolor{RoyalBlue3}{rgb}{0.23,0.37,0.80}
\definecolor{RoyalBlue4}{rgb}{0.15,0.25,0.55}
\definecolor{RoyalBlue}{rgb}{0.25,0.41,0.88}
\definecolor{SaddleBrown}{rgb}{0.55,0.27,0.07}
\definecolor{SandyBrown}{rgb}{0.96,0.64,0.38}
\definecolor{SeaGreen1}{rgb}{0.33,1.00,0.62}
\definecolor{SeaGreen2}{rgb}{0.31,0.93,0.58}
\definecolor{SeaGreen3}{rgb}{0.26,0.80,0.50}
\definecolor{SeaGreen4}{rgb}{0.18,0.55,0.34}
\definecolor{SeaGreen}{rgb}{0.18,0.55,0.34}
\definecolor{SkyBlue1}{rgb}{0.53,0.81,1.00}
\definecolor{SkyBlue2}{rgb}{0.49,0.75,0.93}
\definecolor{SkyBlue3}{rgb}{0.42,0.65,0.80}
\definecolor{SkyBlue4}{rgb}{0.29,0.44,0.55}
\definecolor{SkyBlue}{rgb}{0.53,0.81,0.92}
\definecolor{SlateBlue1}{rgb}{0.51,0.44,1.00}
\definecolor{SlateBlue2}{rgb}{0.48,0.40,0.93}
\definecolor{SlateBlue3}{rgb}{0.41,0.35,0.80}
\definecolor{SlateBlue4}{rgb}{0.28,0.24,0.55}
\definecolor{SlateBlue}{rgb}{0.42,0.35,0.80}
\definecolor{SlateGray1}{rgb}{0.78,0.89,1.00}
\definecolor{SlateGray2}{rgb}{0.73,0.83,0.93}
\definecolor{SlateGray3}{rgb}{0.62,0.71,0.80}
\definecolor{SlateGray4}{rgb}{0.42,0.48,0.55}
\definecolor{SlateGray}{rgb}{0.44,0.50,0.56}
\definecolor{SlateGrey}{rgb}{0.44,0.50,0.56}
\definecolor{SpringGreen1}{rgb}{0.00,1.00,0.50}
\definecolor{SpringGreen2}{rgb}{0.00,0.93,0.46}
\definecolor{SpringGreen3}{rgb}{0.00,0.80,0.40}
\definecolor{SpringGreen4}{rgb}{0.00,0.55,0.27}
\definecolor{SpringGreen}{rgb}{0.00,1.00,0.50}
\definecolor{SteelBlue1}{rgb}{0.39,0.72,1.00}
\definecolor{SteelBlue2}{rgb}{0.36,0.67,0.93}
\definecolor{SteelBlue3}{rgb}{0.31,0.58,0.80}
\definecolor{SteelBlue4}{rgb}{0.21,0.39,0.55}
\definecolor{SteelBlue}{rgb}{0.27,0.51,0.71}
\definecolor{VioletRed1}{rgb}{1.00,0.24,0.59}
\definecolor{VioletRed2}{rgb}{0.93,0.23,0.55}
\definecolor{VioletRed3}{rgb}{0.80,0.20,0.47}
\definecolor{VioletRed4}{rgb}{0.55,0.13,0.32}
\definecolor{VioletRed}{rgb}{0.82,0.13,0.56}
\definecolor{WhiteSmoke}{rgb}{0.96,0.96,0.96}
\definecolor{YellowGreen}{rgb}{0.60,0.80,0.20}
\definecolor{aliceblue}{rgb}{0.94,0.97,1.00}
\definecolor{antiquewhite}{rgb}{0.98,0.92,0.84}
\definecolor{aquamarine1}{rgb}{0.50,1.00,0.83}
\definecolor{aquamarine2}{rgb}{0.46,0.93,0.78}
\definecolor{aquamarine3}{rgb}{0.40,0.80,0.67}
\definecolor{aquamarine4}{rgb}{0.27,0.55,0.45}
\definecolor{aquamarine}{rgb}{0.50,1.00,0.83}
\definecolor{azure1}{rgb}{0.94,1.00,1.00}
\definecolor{azure2}{rgb}{0.88,0.93,0.93}
\definecolor{azure3}{rgb}{0.76,0.80,0.80}
\definecolor{azure4}{rgb}{0.51,0.55,0.55}
\definecolor{azure}{rgb}{0.94,1.00,1.00}
\definecolor{beige}{rgb}{0.96,0.96,0.86}
\definecolor{bisque1}{rgb}{1.00,0.89,0.77}
\definecolor{bisque2}{rgb}{0.93,0.84,0.72}
\definecolor{bisque3}{rgb}{0.80,0.72,0.62}
\definecolor{bisque4}{rgb}{0.55,0.49,0.42}
\definecolor{bisque}{rgb}{1.00,0.89,0.77}
\definecolor{black}{rgb}{0.00,0.00,0.00}
\definecolor{blanchedalmond}{rgb}{1.00,0.92,0.80}
\definecolor{blue1}{rgb}{0.00,0.00,1.00}
\definecolor{blue2}{rgb}{0.00,0.00,0.93}
\definecolor{blue3}{rgb}{0.00,0.00,0.80}
\definecolor{blue4}{rgb}{0.00,0.00,0.55}
\definecolor{blueviolet}{rgb}{0.54,0.17,0.89}
\definecolor{blue}{rgb}{0.00,0.00,1.00}
\definecolor{brown1}{rgb}{1.00,0.25,0.25}
\definecolor{brown2}{rgb}{0.93,0.23,0.23}
\definecolor{brown3}{rgb}{0.80,0.20,0.20}
\definecolor{brown4}{rgb}{0.55,0.14,0.14}
\definecolor{brown}{rgb}{0.65,0.16,0.16}
\definecolor{burlywood1}{rgb}{1.00,0.83,0.61}
\definecolor{burlywood2}{rgb}{0.93,0.77,0.57}
\definecolor{burlywood3}{rgb}{0.80,0.67,0.49}
\definecolor{burlywood4}{rgb}{0.55,0.45,0.33}
\definecolor{burlywood}{rgb}{0.87,0.72,0.53}
\definecolor{cadetblue}{rgb}{0.37,0.62,0.63}
\definecolor{chartreuse1}{rgb}{0.50,1.00,0.00}
\definecolor{chartreuse2}{rgb}{0.46,0.93,0.00}
\definecolor{chartreuse3}{rgb}{0.40,0.80,0.00}
\definecolor{chartreuse4}{rgb}{0.27,0.55,0.00}
\definecolor{chartreuse}{rgb}{0.50,1.00,0.00}
\definecolor{chocolate1}{rgb}{1.00,0.50,0.14}
\definecolor{chocolate2}{rgb}{0.93,0.46,0.13}
\definecolor{chocolate3}{rgb}{0.80,0.40,0.11}
\definecolor{chocolate4}{rgb}{0.55,0.27,0.07}
\definecolor{chocolate}{rgb}{0.82,0.41,0.12}
\definecolor{coral1}{rgb}{1.00,0.45,0.34}
\definecolor{coral2}{rgb}{0.93,0.42,0.31}
\definecolor{coral3}{rgb}{0.80,0.36,0.27}
\definecolor{coral4}{rgb}{0.55,0.24,0.18}
\definecolor{coral}{rgb}{1.00,0.50,0.31}
\definecolor{cornflowerblue}{rgb}{0.39,0.58,0.93}
\definecolor{cornsilk1}{rgb}{1.00,0.97,0.86}
\definecolor{cornsilk2}{rgb}{0.93,0.91,0.80}
\definecolor{cornsilk3}{rgb}{0.80,0.78,0.69}
\definecolor{cornsilk4}{rgb}{0.55,0.53,0.47}
\definecolor{cornsilk}{rgb}{1.00,0.97,0.86}
\definecolor{cyan1}{rgb}{0.00,1.00,1.00}
\definecolor{cyan2}{rgb}{0.00,0.93,0.93}
\definecolor{cyan3}{rgb}{0.00,0.80,0.80}
\definecolor{cyan4}{rgb}{0.00,0.55,0.55}
\definecolor{cyan}{rgb}{0.00,1.00,1.00}
\definecolor{darkblue}{rgb}{0.00,0.00,0.55}
\definecolor{darkcyan}{rgb}{0.00,0.55,0.55}
\definecolor{darkgoldenrod}{rgb}{0.72,0.53,0.04}
\definecolor{darkgray}{rgb}{0.66,0.66,0.66}
\definecolor{darkgreen}{rgb}{0.00,0.39,0.00}
\definecolor{darkgrey}{rgb}{0.66,0.66,0.66}
\definecolor{darkkhaki}{rgb}{0.74,0.72,0.42}
\definecolor{darkmagenta}{rgb}{0.55,0.00,0.55}
\definecolor{darkolive}{rgb}{0.33,0.42,0.18}
\definecolor{darkorange}{rgb}{1.00,0.55,0.00}
\definecolor{darkorchid}{rgb}{0.60,0.20,0.80}
\definecolor{darkred}{rgb}{0.55,0.00,0.00}
\definecolor{darksalmon}{rgb}{0.91,0.59,0.48}
\definecolor{darksea}{rgb}{0.56,0.74,0.56}
\definecolor{darkslate}{rgb}{0.18,0.31,0.31}
\definecolor{darkslate}{rgb}{0.18,0.31,0.31}
\definecolor{darkslate}{rgb}{0.28,0.24,0.55}
\definecolor{darkturquoise}{rgb}{0.00,0.81,0.82}
\definecolor{darkviolet}{rgb}{0.58,0.00,0.83}
\definecolor{deeppink}{rgb}{1.00,0.08,0.58}
\definecolor{deepsky}{rgb}{0.00,0.75,1.00}
\definecolor{dimgray}{rgb}{0.41,0.41,0.41}
\definecolor{dimgrey}{rgb}{0.41,0.41,0.41}
\definecolor{dodgerblue}{rgb}{0.12,0.56,1.00}
\definecolor{firebrick1}{rgb}{1.00,0.19,0.19}
\definecolor{firebrick2}{rgb}{0.93,0.17,0.17}
\definecolor{firebrick3}{rgb}{0.80,0.15,0.15}
\definecolor{firebrick4}{rgb}{0.55,0.10,0.10}
\definecolor{firebrick}{rgb}{0.70,0.13,0.13}
\definecolor{floralwhite}{rgb}{1.00,0.98,0.94}
\definecolor{forestgreen}{rgb}{0.13,0.55,0.13}
\definecolor{gainsboro}{rgb}{0.86,0.86,0.86}
\definecolor{ghostwhite}{rgb}{0.97,0.97,1.00}
\definecolor{gold1}{rgb}{1.00,0.84,0.00}
\definecolor{gold2}{rgb}{0.93,0.79,0.00}
\definecolor{gold3}{rgb}{0.80,0.68,0.00}
\definecolor{gold4}{rgb}{0.55,0.46,0.00}
\definecolor{goldenrod1}{rgb}{1.00,0.76,0.15}
\definecolor{goldenrod2}{rgb}{0.93,0.71,0.13}
\definecolor{goldenrod3}{rgb}{0.80,0.61,0.11}
\definecolor{goldenrod4}{rgb}{0.55,0.41,0.08}
\definecolor{goldenrod}{rgb}{0.85,0.65,0.13}
\definecolor{gold}{rgb}{1.00,0.84,0.00}
\definecolor{gray0}{rgb}{0.00,0.00,0.00}
\definecolor{gray100}{rgb}{1.00,1.00,1.00}
\definecolor{gray10}{rgb}{0.10,0.10,0.10}
\definecolor{gray11}{rgb}{0.11,0.11,0.11}
\definecolor{gray12}{rgb}{0.12,0.12,0.12}
\definecolor{gray13}{rgb}{0.13,0.13,0.13}
\definecolor{gray14}{rgb}{0.14,0.14,0.14}
\definecolor{gray15}{rgb}{0.15,0.15,0.15}
\definecolor{gray16}{rgb}{0.16,0.16,0.16}
\definecolor{gray17}{rgb}{0.17,0.17,0.17}
\definecolor{gray18}{rgb}{0.18,0.18,0.18}
\definecolor{gray19}{rgb}{0.19,0.19,0.19}
\definecolor{gray1}{rgb}{0.01,0.01,0.01}
\definecolor{gray20}{rgb}{0.20,0.20,0.20}
\definecolor{gray21}{rgb}{0.21,0.21,0.21}
\definecolor{gray22}{rgb}{0.22,0.22,0.22}
\definecolor{gray23}{rgb}{0.23,0.23,0.23}
\definecolor{gray24}{rgb}{0.24,0.24,0.24}
\definecolor{gray25}{rgb}{0.25,0.25,0.25}
\definecolor{gray26}{rgb}{0.26,0.26,0.26}
\definecolor{gray27}{rgb}{0.27,0.27,0.27}
\definecolor{gray28}{rgb}{0.28,0.28,0.28}
\definecolor{gray29}{rgb}{0.29,0.29,0.29}
\definecolor{gray2}{rgb}{0.02,0.02,0.02}
\definecolor{gray30}{rgb}{0.30,0.30,0.30}
\definecolor{gray31}{rgb}{0.31,0.31,0.31}
\definecolor{gray32}{rgb}{0.32,0.32,0.32}
\definecolor{gray33}{rgb}{0.33,0.33,0.33}
\definecolor{gray34}{rgb}{0.34,0.34,0.34}
\definecolor{gray35}{rgb}{0.35,0.35,0.35}
\definecolor{gray36}{rgb}{0.36,0.36,0.36}
\definecolor{gray37}{rgb}{0.37,0.37,0.37}
\definecolor{gray38}{rgb}{0.38,0.38,0.38}
\definecolor{gray39}{rgb}{0.39,0.39,0.39}
\definecolor{gray3}{rgb}{0.03,0.03,0.03}
\definecolor{gray40}{rgb}{0.40,0.40,0.40}
\definecolor{gray41}{rgb}{0.41,0.41,0.41}
\definecolor{gray42}{rgb}{0.42,0.42,0.42}
\definecolor{gray43}{rgb}{0.43,0.43,0.43}
\definecolor{gray44}{rgb}{0.44,0.44,0.44}
\definecolor{gray45}{rgb}{0.45,0.45,0.45}
\definecolor{gray46}{rgb}{0.46,0.46,0.46}
\definecolor{gray47}{rgb}{0.47,0.47,0.47}
\definecolor{gray48}{rgb}{0.48,0.48,0.48}
\definecolor{gray49}{rgb}{0.49,0.49,0.49}
\definecolor{gray4}{rgb}{0.04,0.04,0.04}
\definecolor{gray50}{rgb}{0.50,0.50,0.50}
\definecolor{gray51}{rgb}{0.51,0.51,0.51}
\definecolor{gray52}{rgb}{0.52,0.52,0.52}
\definecolor{gray53}{rgb}{0.53,0.53,0.53}
\definecolor{gray54}{rgb}{0.54,0.54,0.54}
\definecolor{gray55}{rgb}{0.55,0.55,0.55}
\definecolor{gray56}{rgb}{0.56,0.56,0.56}
\definecolor{gray57}{rgb}{0.57,0.57,0.57}
\definecolor{gray58}{rgb}{0.58,0.58,0.58}
\definecolor{gray59}{rgb}{0.59,0.59,0.59}
\definecolor{gray5}{rgb}{0.05,0.05,0.05}
\definecolor{gray60}{rgb}{0.60,0.60,0.60}
\definecolor{gray61}{rgb}{0.61,0.61,0.61}
\definecolor{gray62}{rgb}{0.62,0.62,0.62}
\definecolor{gray63}{rgb}{0.63,0.63,0.63}
\definecolor{gray64}{rgb}{0.64,0.64,0.64}
\definecolor{gray65}{rgb}{0.65,0.65,0.65}
\definecolor{gray66}{rgb}{0.66,0.66,0.66}
\definecolor{gray67}{rgb}{0.67,0.67,0.67}
\definecolor{gray68}{rgb}{0.68,0.68,0.68}
\definecolor{gray69}{rgb}{0.69,0.69,0.69}
\definecolor{gray6}{rgb}{0.06,0.06,0.06}
\definecolor{gray70}{rgb}{0.70,0.70,0.70}
\definecolor{gray71}{rgb}{0.71,0.71,0.71}
\definecolor{gray72}{rgb}{0.72,0.72,0.72}
\definecolor{gray73}{rgb}{0.73,0.73,0.73}
\definecolor{gray74}{rgb}{0.74,0.74,0.74}
\definecolor{gray75}{rgb}{0.75,0.75,0.75}
\definecolor{gray76}{rgb}{0.76,0.76,0.76}
\definecolor{gray77}{rgb}{0.77,0.77,0.77}
\definecolor{gray78}{rgb}{0.78,0.78,0.78}
\definecolor{gray79}{rgb}{0.79,0.79,0.79}
\definecolor{gray7}{rgb}{0.07,0.07,0.07}
\definecolor{gray80}{rgb}{0.80,0.80,0.80}
\definecolor{gray81}{rgb}{0.81,0.81,0.81}
\definecolor{gray82}{rgb}{0.82,0.82,0.82}
\definecolor{gray83}{rgb}{0.83,0.83,0.83}
\definecolor{gray84}{rgb}{0.84,0.84,0.84}
\definecolor{gray85}{rgb}{0.85,0.85,0.85}
\definecolor{gray86}{rgb}{0.86,0.86,0.86}
\definecolor{gray87}{rgb}{0.87,0.87,0.87}
\definecolor{gray88}{rgb}{0.88,0.88,0.88}
\definecolor{gray89}{rgb}{0.89,0.89,0.89}
\definecolor{gray8}{rgb}{0.08,0.08,0.08}
\definecolor{gray90}{rgb}{0.90,0.90,0.90}
\definecolor{gray91}{rgb}{0.91,0.91,0.91}
\definecolor{gray92}{rgb}{0.92,0.92,0.92}
\definecolor{gray93}{rgb}{0.93,0.93,0.93}
\definecolor{gray94}{rgb}{0.94,0.94,0.94}
\definecolor{gray95}{rgb}{0.95,0.95,0.95}
\definecolor{gray96}{rgb}{0.96,0.96,0.96}
\definecolor{gray97}{rgb}{0.97,0.97,0.97}
\definecolor{gray98}{rgb}{0.98,0.98,0.98}
\definecolor{gray99}{rgb}{0.99,0.99,0.99}
\definecolor{gray9}{rgb}{0.09,0.09,0.09}
\definecolor{gray}{rgb}{0.75,0.75,0.75}
\definecolor{green1}{rgb}{0.00,1.00,0.00}
\definecolor{green2}{rgb}{0.00,0.93,0.00}
\definecolor{green3}{rgb}{0.00,0.80,0.00}
\definecolor{green4}{rgb}{0.00,0.55,0.00}
\definecolor{greenyellow}{rgb}{0.68,1.00,0.18}
\definecolor{green}{rgb}{0.00,1.00,0.00}
\definecolor{grey0}{rgb}{0.00,0.00,0.00}
\definecolor{grey100}{rgb}{1.00,1.00,1.00}
\definecolor{grey10}{rgb}{0.10,0.10,0.10}
\definecolor{grey11}{rgb}{0.11,0.11,0.11}
\definecolor{grey12}{rgb}{0.12,0.12,0.12}
\definecolor{grey13}{rgb}{0.13,0.13,0.13}
\definecolor{grey14}{rgb}{0.14,0.14,0.14}
\definecolor{grey15}{rgb}{0.15,0.15,0.15}
\definecolor{grey16}{rgb}{0.16,0.16,0.16}
\definecolor{grey17}{rgb}{0.17,0.17,0.17}
\definecolor{grey18}{rgb}{0.18,0.18,0.18}
\definecolor{grey19}{rgb}{0.19,0.19,0.19}
\definecolor{grey1}{rgb}{0.01,0.01,0.01}
\definecolor{grey20}{rgb}{0.20,0.20,0.20}
\definecolor{grey21}{rgb}{0.21,0.21,0.21}
\definecolor{grey22}{rgb}{0.22,0.22,0.22}
\definecolor{grey23}{rgb}{0.23,0.23,0.23}
\definecolor{grey24}{rgb}{0.24,0.24,0.24}
\definecolor{grey25}{rgb}{0.25,0.25,0.25}
\definecolor{grey26}{rgb}{0.26,0.26,0.26}
\definecolor{grey27}{rgb}{0.27,0.27,0.27}
\definecolor{grey28}{rgb}{0.28,0.28,0.28}
\definecolor{grey29}{rgb}{0.29,0.29,0.29}
\definecolor{grey2}{rgb}{0.02,0.02,0.02}
\definecolor{grey30}{rgb}{0.30,0.30,0.30}
\definecolor{grey31}{rgb}{0.31,0.31,0.31}
\definecolor{grey32}{rgb}{0.32,0.32,0.32}
\definecolor{grey33}{rgb}{0.33,0.33,0.33}
\definecolor{grey34}{rgb}{0.34,0.34,0.34}
\definecolor{grey35}{rgb}{0.35,0.35,0.35}
\definecolor{grey36}{rgb}{0.36,0.36,0.36}
\definecolor{grey37}{rgb}{0.37,0.37,0.37}
\definecolor{grey38}{rgb}{0.38,0.38,0.38}
\definecolor{grey39}{rgb}{0.39,0.39,0.39}
\definecolor{grey3}{rgb}{0.03,0.03,0.03}
\definecolor{grey40}{rgb}{0.40,0.40,0.40}
\definecolor{grey41}{rgb}{0.41,0.41,0.41}
\definecolor{grey42}{rgb}{0.42,0.42,0.42}
\definecolor{grey43}{rgb}{0.43,0.43,0.43}
\definecolor{grey44}{rgb}{0.44,0.44,0.44}
\definecolor{grey45}{rgb}{0.45,0.45,0.45}
\definecolor{grey46}{rgb}{0.46,0.46,0.46}
\definecolor{grey47}{rgb}{0.47,0.47,0.47}
\definecolor{grey48}{rgb}{0.48,0.48,0.48}
\definecolor{grey49}{rgb}{0.49,0.49,0.49}
\definecolor{grey4}{rgb}{0.04,0.04,0.04}
\definecolor{grey50}{rgb}{0.50,0.50,0.50}
\definecolor{grey51}{rgb}{0.51,0.51,0.51}
\definecolor{grey52}{rgb}{0.52,0.52,0.52}
\definecolor{grey53}{rgb}{0.53,0.53,0.53}
\definecolor{grey54}{rgb}{0.54,0.54,0.54}
\definecolor{grey55}{rgb}{0.55,0.55,0.55}
\definecolor{grey56}{rgb}{0.56,0.56,0.56}
\definecolor{grey57}{rgb}{0.57,0.57,0.57}
\definecolor{grey58}{rgb}{0.58,0.58,0.58}
\definecolor{grey59}{rgb}{0.59,0.59,0.59}
\definecolor{grey5}{rgb}{0.05,0.05,0.05}
\definecolor{grey60}{rgb}{0.60,0.60,0.60}
\definecolor{grey61}{rgb}{0.61,0.61,0.61}
\definecolor{grey62}{rgb}{0.62,0.62,0.62}
\definecolor{grey63}{rgb}{0.63,0.63,0.63}
\definecolor{grey64}{rgb}{0.64,0.64,0.64}
\definecolor{grey65}{rgb}{0.65,0.65,0.65}
\definecolor{grey66}{rgb}{0.66,0.66,0.66}
\definecolor{grey67}{rgb}{0.67,0.67,0.67}
\definecolor{grey68}{rgb}{0.68,0.68,0.68}
\definecolor{grey69}{rgb}{0.69,0.69,0.69}
\definecolor{grey6}{rgb}{0.06,0.06,0.06}
\definecolor{grey70}{rgb}{0.70,0.70,0.70}
\definecolor{grey71}{rgb}{0.71,0.71,0.71}
\definecolor{grey72}{rgb}{0.72,0.72,0.72}
\definecolor{grey73}{rgb}{0.73,0.73,0.73}
\definecolor{grey74}{rgb}{0.74,0.74,0.74}
\definecolor{grey75}{rgb}{0.75,0.75,0.75}
\definecolor{grey76}{rgb}{0.76,0.76,0.76}
\definecolor{grey77}{rgb}{0.77,0.77,0.77}
\definecolor{grey78}{rgb}{0.78,0.78,0.78}
\definecolor{grey79}{rgb}{0.79,0.79,0.79}
\definecolor{grey7}{rgb}{0.07,0.07,0.07}
\definecolor{grey80}{rgb}{0.80,0.80,0.80}
\definecolor{grey81}{rgb}{0.81,0.81,0.81}
\definecolor{grey82}{rgb}{0.82,0.82,0.82}
\definecolor{grey83}{rgb}{0.83,0.83,0.83}
\definecolor{grey84}{rgb}{0.84,0.84,0.84}
\definecolor{grey85}{rgb}{0.85,0.85,0.85}
\definecolor{grey86}{rgb}{0.86,0.86,0.86}
\definecolor{grey87}{rgb}{0.87,0.87,0.87}
\definecolor{grey88}{rgb}{0.88,0.88,0.88}
\definecolor{grey89}{rgb}{0.89,0.89,0.89}
\definecolor{grey8}{rgb}{0.08,0.08,0.08}
\definecolor{grey90}{rgb}{0.90,0.90,0.90}
\definecolor{grey91}{rgb}{0.91,0.91,0.91}
\definecolor{grey92}{rgb}{0.92,0.92,0.92}
\definecolor{grey93}{rgb}{0.93,0.93,0.93}
\definecolor{grey94}{rgb}{0.94,0.94,0.94}
\definecolor{grey95}{rgb}{0.95,0.95,0.95}
\definecolor{grey96}{rgb}{0.96,0.96,0.96}
\definecolor{grey97}{rgb}{0.97,0.97,0.97}
\definecolor{grey98}{rgb}{0.98,0.98,0.98}
\definecolor{grey99}{rgb}{0.99,0.99,0.99}
\definecolor{grey9}{rgb}{0.09,0.09,0.09}
\definecolor{grey}{rgb}{0.75,0.75,0.75}
\definecolor{honeydew1}{rgb}{0.94,1.00,0.94}
\definecolor{honeydew2}{rgb}{0.88,0.93,0.88}
\definecolor{honeydew3}{rgb}{0.76,0.80,0.76}
\definecolor{honeydew4}{rgb}{0.51,0.55,0.51}
\definecolor{honeydew}{rgb}{0.94,1.00,0.94}
\definecolor{hotpink}{rgb}{1.00,0.41,0.71}
\definecolor{indianred}{rgb}{0.80,0.36,0.36}
\definecolor{ivory1}{rgb}{1.00,1.00,0.94}
\definecolor{ivory2}{rgb}{0.93,0.93,0.88}
\definecolor{ivory3}{rgb}{0.80,0.80,0.76}
\definecolor{ivory4}{rgb}{0.55,0.55,0.51}
\definecolor{ivory}{rgb}{1.00,1.00,0.94}
\definecolor{khaki1}{rgb}{1.00,0.96,0.56}
\definecolor{khaki2}{rgb}{0.93,0.90,0.52}
\definecolor{khaki3}{rgb}{0.80,0.78,0.45}
\definecolor{khaki4}{rgb}{0.55,0.53,0.31}
\definecolor{khaki}{rgb}{0.94,0.90,0.55}
\definecolor{lavenderblush}{rgb}{1.00,0.94,0.96}
\definecolor{lavender}{rgb}{0.90,0.90,0.98}
\definecolor{lawngreen}{rgb}{0.49,0.99,0.00}
\definecolor{lemonchiffon}{rgb}{1.00,0.98,0.80}
\definecolor{lightblue}{rgb}{0.68,0.85,0.90}
\definecolor{lightcoral}{rgb}{0.94,0.50,0.50}
\definecolor{lightcyan}{rgb}{0.88,1.00,1.00}
\definecolor{lightgoldenrod}{rgb}{0.93,0.87,0.51}
\definecolor{lightgoldenrod}{rgb}{0.98,0.98,0.82}
\definecolor{lightgray}{rgb}{0.83,0.83,0.83}
\definecolor{lightgreen}{rgb}{0.56,0.93,0.56}
\definecolor{lightgrey}{rgb}{0.83,0.83,0.83}
\definecolor{lightpink}{rgb}{1.00,0.71,0.76}
\definecolor{lightsalmon}{rgb}{1.00,0.63,0.48}
\definecolor{lightsea}{rgb}{0.13,0.70,0.67}
\definecolor{lightsky}{rgb}{0.53,0.81,0.98}
\definecolor{lightslate}{rgb}{0.47,0.53,0.60}
\definecolor{lightslate}{rgb}{0.47,0.53,0.60}
\definecolor{lightslate}{rgb}{0.52,0.44,1.00}
\definecolor{lightsteel}{rgb}{0.69,0.77,0.87}
\definecolor{lightyellow}{rgb}{1.00,1.00,0.88}
\definecolor{limegreen}{rgb}{0.20,0.80,0.20}
\definecolor{linen}{rgb}{0.98,0.94,0.90}
\definecolor{magenta1}{rgb}{1.00,0.00,1.00}
\definecolor{magenta2}{rgb}{0.93,0.00,0.93}
\definecolor{magenta3}{rgb}{0.80,0.00,0.80}
\definecolor{magenta4}{rgb}{0.55,0.00,0.55}
\definecolor{magenta}{rgb}{1.00,0.00,1.00}
\definecolor{maroon1}{rgb}{1.00,0.20,0.70}
\definecolor{maroon2}{rgb}{0.93,0.19,0.65}
\definecolor{maroon3}{rgb}{0.80,0.16,0.56}
\definecolor{maroon4}{rgb}{0.55,0.11,0.38}
\definecolor{maroon}{rgb}{0.69,0.19,0.38}
\definecolor{mediumaquamarine}{rgb}{0.40,0.80,0.67}
\definecolor{mediumblue}{rgb}{0.00,0.00,0.80}
\definecolor{mediumorchid}{rgb}{0.73,0.33,0.83}
\definecolor{mediumpurple}{rgb}{0.58,0.44,0.86}
\definecolor{mediumsea}{rgb}{0.24,0.70,0.44}
\definecolor{mediumslate}{rgb}{0.48,0.41,0.93}
\definecolor{mediumspring}{rgb}{0.00,0.98,0.60}
\definecolor{mediumturquoise}{rgb}{0.28,0.82,0.80}
\definecolor{mediumviolet}{rgb}{0.78,0.08,0.52}
\definecolor{midnightblue}{rgb}{0.10,0.10,0.44}
\definecolor{mintcream}{rgb}{0.96,1.00,0.98}
\definecolor{mistyrose}{rgb}{1.00,0.89,0.88}
\definecolor{moccasin}{rgb}{1.00,0.89,0.71}
\definecolor{navajowhite}{rgb}{1.00,0.87,0.68}
\definecolor{navyblue}{rgb}{0.00,0.00,0.50}
\definecolor{navy}{rgb}{0.00,0.00,0.50}
\definecolor{oldlace}{rgb}{0.99,0.96,0.90}
\definecolor{olivedrab}{rgb}{0.42,0.56,0.14}
\definecolor{orange1}{rgb}{1.00,0.65,0.00}
\definecolor{orange2}{rgb}{0.93,0.60,0.00}
\definecolor{orange3}{rgb}{0.80,0.52,0.00}
\definecolor{orange4}{rgb}{0.55,0.35,0.00}
\definecolor{orangered}{rgb}{1.00,0.27,0.00}
\definecolor{orange}{rgb}{1.00,0.65,0.00}
\definecolor{orchid1}{rgb}{1.00,0.51,0.98}
\definecolor{orchid2}{rgb}{0.93,0.48,0.91}
\definecolor{orchid3}{rgb}{0.80,0.41,0.79}
\definecolor{orchid4}{rgb}{0.55,0.28,0.54}
\definecolor{orchid}{rgb}{0.85,0.44,0.84}
\definecolor{palegoldenrod}{rgb}{0.93,0.91,0.67}
\definecolor{palegreen}{rgb}{0.60,0.98,0.60}
\definecolor{paleturquoise}{rgb}{0.69,0.93,0.93}
\definecolor{paleviolet}{rgb}{0.86,0.44,0.58}
\definecolor{papayawhip}{rgb}{1.00,0.94,0.84}
\definecolor{peachpuff}{rgb}{1.00,0.85,0.73}
\definecolor{peru}{rgb}{0.80,0.52,0.25}
\definecolor{pink1}{rgb}{1.00,0.71,0.77}
\definecolor{pink2}{rgb}{0.93,0.66,0.72}
\definecolor{pink3}{rgb}{0.80,0.57,0.62}
\definecolor{pink4}{rgb}{0.55,0.39,0.42}
\definecolor{pink}{rgb}{1.00,0.75,0.80}
\definecolor{plum1}{rgb}{1.00,0.73,1.00}
\definecolor{plum2}{rgb}{0.93,0.68,0.93}
\definecolor{plum3}{rgb}{0.80,0.59,0.80}
\definecolor{plum4}{rgb}{0.55,0.40,0.55}
\definecolor{plum}{rgb}{0.87,0.63,0.87}
\definecolor{powderblue}{rgb}{0.69,0.88,0.90}
\definecolor{purple1}{rgb}{0.61,0.19,1.00}
\definecolor{purple2}{rgb}{0.57,0.17,0.93}
\definecolor{purple3}{rgb}{0.49,0.15,0.80}
\definecolor{purple4}{rgb}{0.33,0.10,0.55}
\definecolor{purple}{rgb}{0.63,0.13,0.94}
\definecolor{red1}{rgb}{1.00,0.00,0.00}
\definecolor{red2}{rgb}{0.93,0.00,0.00}
\definecolor{red3}{rgb}{0.80,0.00,0.00}
\definecolor{red4}{rgb}{0.55,0.00,0.00}
\definecolor{red}{rgb}{1.00,0.00,0.00}
\definecolor{rosybrown}{rgb}{0.74,0.56,0.56}
\definecolor{royalblue}{rgb}{0.25,0.41,0.88}
\definecolor{saddlebrown}{rgb}{0.55,0.27,0.07}
\definecolor{salmon1}{rgb}{1.00,0.55,0.41}
\definecolor{salmon2}{rgb}{0.93,0.51,0.38}
\definecolor{salmon3}{rgb}{0.80,0.44,0.33}
\definecolor{salmon4}{rgb}{0.55,0.30,0.22}
\definecolor{salmon}{rgb}{0.98,0.50,0.45}
\definecolor{sandybrown}{rgb}{0.96,0.64,0.38}
\definecolor{seagreen}{rgb}{0.18,0.55,0.34}
\definecolor{seashell1}{rgb}{1.00,0.96,0.93}
\definecolor{seashell2}{rgb}{0.93,0.90,0.87}
\definecolor{seashell3}{rgb}{0.80,0.77,0.75}
\definecolor{seashell4}{rgb}{0.55,0.53,0.51}
\definecolor{seashell}{rgb}{1.00,0.96,0.93}
\definecolor{sienna1}{rgb}{1.00,0.51,0.28}
\definecolor{sienna2}{rgb}{0.93,0.47,0.26}
\definecolor{sienna3}{rgb}{0.80,0.41,0.22}
\definecolor{sienna4}{rgb}{0.55,0.28,0.15}
\definecolor{sienna}{rgb}{0.63,0.32,0.18}
\definecolor{skyblue}{rgb}{0.53,0.81,0.92}
\definecolor{slateblue}{rgb}{0.42,0.35,0.80}
\definecolor{slategray}{rgb}{0.44,0.50,0.56}
\definecolor{slategrey}{rgb}{0.44,0.50,0.56}
\definecolor{snow1}{rgb}{1.00,0.98,0.98}
\definecolor{snow2}{rgb}{0.93,0.91,0.91}
\definecolor{snow3}{rgb}{0.80,0.79,0.79}
\definecolor{snow4}{rgb}{0.55,0.54,0.54}
\definecolor{snow}{rgb}{1.00,0.98,0.98}
\definecolor{springgreen}{rgb}{0.00,1.00,0.50}
\definecolor{steelblue}{rgb}{0.27,0.51,0.71}
\definecolor{tan1}{rgb}{1.00,0.65,0.31}
\definecolor{tan2}{rgb}{0.93,0.60,0.29}
\definecolor{tan3}{rgb}{0.80,0.52,0.25}
\definecolor{tan4}{rgb}{0.55,0.35,0.17}
\definecolor{tan}{rgb}{0.82,0.71,0.55}
\definecolor{thistle1}{rgb}{1.00,0.88,1.00}
\definecolor{thistle2}{rgb}{0.93,0.82,0.93}
\definecolor{thistle3}{rgb}{0.80,0.71,0.80}
\definecolor{thistle4}{rgb}{0.55,0.48,0.55}
\definecolor{thistle}{rgb}{0.85,0.75,0.85}
\definecolor{tomato1}{rgb}{1.00,0.39,0.28}
\definecolor{tomato2}{rgb}{0.93,0.36,0.26}
\definecolor{tomato3}{rgb}{0.80,0.31,0.22}
\definecolor{tomato4}{rgb}{0.55,0.21,0.15}
\definecolor{tomato}{rgb}{1.00,0.39,0.28}
\definecolor{turquoise1}{rgb}{0.00,0.96,1.00}
\definecolor{turquoise2}{rgb}{0.00,0.90,0.93}
\definecolor{turquoise3}{rgb}{0.00,0.77,0.80}
\definecolor{turquoise4}{rgb}{0.00,0.53,0.55}
\definecolor{turquoise}{rgb}{0.25,0.88,0.82}
\definecolor{violetred}{rgb}{0.82,0.13,0.56}
\definecolor{violet}{rgb}{0.93,0.51,0.93}
\definecolor{wheat1}{rgb}{1.00,0.91,0.73}
\definecolor{wheat2}{rgb}{0.93,0.85,0.68}
\definecolor{wheat3}{rgb}{0.80,0.73,0.59}
\definecolor{wheat4}{rgb}{0.55,0.49,0.40}
\definecolor{wheat}{rgb}{0.96,0.87,0.70}
\definecolor{whitesmoke}{rgb}{0.96,0.96,0.96}
\definecolor{white}{rgb}{1.00,1.00,1.00}
\definecolor{yellow1}{rgb}{1.00,1.00,0.00}
\definecolor{yellow2}{rgb}{0.93,0.93,0.00}
\definecolor{yellow3}{rgb}{0.80,0.80,0.00}
\definecolor{yellow4}{rgb}{0.55,0.55,0.00}
\definecolor{yellowgreen}{rgb}{0.60,0.80,0.20}
\definecolor{yellow}{rgb}{1.00,1.00,0.00}
\DeclareMathAlphabet{\mathpzc}{OT1}{pzc}{m}{it} 
\theoremstyle{plain}
\newtheorem{thm}{Theorem}[section]
\newtheorem{cor}[thm]{Corollary}
\newtheorem{lmm}[thm]{Lemma}
\newtheorem{prpn}[thm]{Proposition}
\newtheorem{rem}[thm]{Remark}
\theoremstyle{definition}
\newtheorem{defn}[thm]{Definition}
\newtheorem{eg}[thm]{Example}
\newtheorem*{thma}{Theorem A}
\newtheorem*{thmb}{Theorem B}
\newtheorem*{thmc}{Theorem C}
\newtheorem*{thmd}{Theorem D}
\newcommand{\R}{\mathbb{R}}
\newcommand{\C}{\mathbb{C}}
\newcommand{\bgd}{\begin{displaymath}}
\newcommand{\edd}{\end{displaymath}}
\newcommand{\bgc}{\begin{center}}
\newcommand{\edc}{\end{center}}
\newcommand{\hf}{\hspace*{0.5cm}}
\newcommand{\lan}{\left\langle}
\newcommand{\ran}{\right\rangle}
\newcommand{\upq}{U(p,q)}
\newcommand{\ep}{\varepsilon}
\numberwithin{equation}{section}
\numberwithin{equation}{section}
\definecolor{citecol}{RGB}{12,127,172}
\definecolor{example}{rgb}{0.6, 0.4, 0.8}
\definecolor{remark}{rgb}{0.0, 0.5, 0.0}
\newcommand{\rbb}{\ensuremath{\mathbb{R}}} 
\newcommand{\cbb}{\ensuremath{\mathbb{C}}}
\newcommand{\vbf}{\ensuremath{\mathbf{v}}}
\renewcommand{\bf}[1]{\ensuremath{\mathbf{#1}}}
\newcommand{\sbb}{\ensuremath{\mathbb{S}}}
\newcommand{\cali}[1]{\ensuremath{\mathcal{#1}}}
\newcommand{\bb}[1]{\ensuremath{\mathbb{#1}}} 
\newcommand{\delbydel}[2]{\ensuremath{\dfrac{\partial#1}{\partial#2}}} 
\newcommand{\crf}{\ensuremath{\mathrm{Cr}(f)}} 
\newcommand{\grad}{\ensuremath{\nabla}} 
\newcommand{\comp}{\mathbin{\mathchoice {\compcent\scriptstyle}{\compcent\scriptstyle} {\compcent\scriptscriptstyle}{\compcent\scriptscriptstyle}}} 
\newcommand{\compcent}[1]{\vcenter{\hbox{$#1\circ$}}} 
\newcommand{\dist}{\ensuremath{\operatorname{dist}}} 
\newcommand{\isom}{\ensuremath{\cong}} 
\newcommand{\defeq}{\vcentcolon=}
\newcommand{\cutn}{\ensuremath{\mathrm{Cu}(N)}}
\newcommand{\sen}{\ensuremath{\mathrm{Se}(N)}}
\newcommand{\innerprod}[2]{\ensuremath{\left\langle #1,#2\right\rangle}}
\newcommand{\norm}[1]{\ensuremath{\left\|#1\right\|}}
\newcommand{\paran}[1]{\ensuremath{\left( #1 \right)}}
\newcommand{\curlybracket}[1]{\ensuremath{\left\{ #1 \right\}}}
\newcommand{\abs}[1]{\ensuremath{\left|#1\right|}}
\newcommand{\aTransInverse}{\ensuremath{\paran{A^T}^{-1}}}
\newcommand{\trace}[1]{\ensuremath{\mathrm{tr}\left( #1 \right)}}
\newcommand{\cu}{\ensuremath{\mathrm{Cu}(p)}}
\newcommand{\co}{\ensuremath{\mathrm{Co}(x_0,\delta)~}} 
\newcommand{\costar}{\ensuremath{\mathrm{Co}^\star(x_0,\delta)~}}
\newcommand{\Ball}{\ensuremath{\overline{B(x_0,\delta)}~}}
\newcommand{\hess}{\mathrm{Hess}}
\newcommand{\spmat}[1]{%
  \left(\begin{smallmatrix}#1\end{smallmatrix}\right)%
}
\begin{document}

\title{A connection between cut locus, Thom space and Morse-Bott functions}

\author{Somnath Basu}
\address{Indian Institute of Science Education \& Research, Mohanpur 741246, West Bengal, India}
\email{somnath.basu@iiserkol.ac.in}

\author{Sachchidanand Prasad}
\address{Indian Institute of Science Education \& Research, Mohanpur 741246, West Bengal, India}
\email{sp17rs038@iiserkol.ac.in}
\thanks{The second author is funded by UGC (NET)-JRF fellowship.}
\date{}

\begin{abstract}
Associated to every closed, embedded submanifold $N$ in a connected Riemannian manifold $M$, there is the distance function $d_N$ which measures the distance of a point in $M$ from $N$. We analyze the square of this function and show that it is Morse-Bott on the complement of the cut locus $\cutn$ of $N$, provided $M$ is complete. Moreover, the gradient flow lines provide a deformation retraction of $M-\cutn$ to $N$. If $M$ is a closed manifold, then we prove that the Thom space of the normal bundle of $N$ is homeomorphic to $M/\cutn$. We also discuss several interesting results which are either applications of these or related observations regarding the theory of cut locus. These results include, but are not limited to, a computation of the local homology of singular matrices, a classification of the homotopy type of the cut locus of a homology sphere inside a sphere, a deformation of the indefinite unitary group $U(p,q)$ to $U(p)\times U(q)$ and a geometric deformation of $GL(n,\R)$ to $O(n,\R)$ which is different from the Gram-Schmidt retraction.
\end{abstract}

\subjclass[2020]{Primary: 53B21, 53C22, 55P10; Secondary: 32B20, 57R19, 58C05} 
\keywords{Cut locus, distance function, Morse-Bott function, Thom space}

\maketitle

\tableofcontents


	
\section{Introduction} \label{intro}

\hf On a Riemannian manifold $M$, the distance function $d_N(\cdot):=d(N,\cdot)$ from a closed subset $N$ is fundamental in the study of variational problems. For instance, the viscosity solution of the Hamilton-Jacobi equation is given by the flow of the gradient vector of the distance function $d_N$, when $N$ is the smooth boundary of a relatively compact domain in manifolds \cite{MaMe03, LiNi05}. Although the distance function $d_N$ is not differentiable at $N$, squaring the function removes this issue. Associated to $N$ and the distance function $d_N$ is a set $\cutn$, the cut locus of $N$ in $M$. Cut locus of a point, a notion initiated by Poincare \cite{Poin05}, has been extensively studied (see \cite{Kob67} for a survey as well as \cite{Mye35, Buc77, Wol79, Sak96}). There has been work on the structure of the cut locus of submanifolds or closed sets \cite{Heb83, Singh87, Heb95, TaSa16}. Suitable simple examples indicate that $M-\cutn$ topologically deforms to $N$. One of our main results is the following (Theorem \ref{thm: Morse-Bott}).
\begin{thma}
\textit{Let $N$ be a closed embedded submanifold of a complete Riemannian manifold $M$ and $d:M\to \mathbb{R}$ denote the distance function with respect to $N$. If $f=d^2$, then its restriction to $M-\mathrm{Cu}(N)$ is a Morse-Bott function, with $N$ as the critical submanifold. Moreover, $M-\mathrm{Cu}(N)$ deforms to $N$ via the gradient flow of $f$.}
\end{thma}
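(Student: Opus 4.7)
The plan is to use the normal exponential map $\exp^\perp : \nu(N) \to M$ to reduce the question to a tautological statement about the squared-norm function on the normal bundle $\nu(N)$. The first step is to identify $M - \cutn$ with an open subset $E \subset \nu(N)$. Completeness of $M$ together with closedness of $N$ guarantees, by Hopf-Rinow, that every point $q \in M$ is joined to $N$ by at least one minimizing geodesic. The standard characterization of the cut locus asserts that $q \notin \cutn$ if and only if there is a unique such minimizer $\gamma$ and its initial velocity $v \in \nu_{\gamma(0)}(N)$ is not a focal vector along $\gamma$. Letting $E$ be the set of such vectors (including the zero section), the map $\exp^\perp : E \to M - \cutn$ is a diffeomorphism, and $d(\exp^\perp(v), N) = |v|$, so that $f \circ \exp^\perp(v) = |v|^2$ on $E$.

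With this identification in hand, the Morse-Bott property and the deformation retract both reduce to inspecting $|v|^2$ on $E$. Locally trivializing $\nu(N)$ over a chart of $N$ with an orthonormal frame yields coordinates $(y_1,\ldots,y_k, v_1,\ldots,v_{n-k})$ in which $|v|^2 = \sum_i v_i^2$; the critical set is $\{v = 0\}$ (the zero section, corresponding to $N \subset M$), and the Hessian restricted to the normal directions is $2I$, hence non-degenerate. Since the Morse-Bott condition is diffeomorphism invariant, $f|_{M - \cutn}$ is Morse-Bott with critical submanifold $N$. Away from $N$, the Gauss lemma gives $\nabla d = \partial_r$, the unit outward tangent to the unique minimizing $N$-geodesic, so $\nabla f = 2d\,\partial_r$ and the negative gradient trajectory through $\exp^\perp(v_0)$ is $t \mapsto \exp^\perp(e^{-2t} v_0)$. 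Setting $H(\exp^\perp(v), \tau) = \exp^\perp((1-\tau)v)$ for $\tau \in [0,1]$ defines a continuous deformation of $M - \cutn$ onto $N$ that is the identity at $\tau = 0$, the footpoint projection at $\tau = 1$, and fixes $N$ throughout; it is a reparametrization of the negative gradient flow.

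The main obstacle is the first step: establishing that $\exp^\perp : E \to M - \cutn$ is a diffeomorphism. This requires Hopf-Rinow to get existence of minimizers, the first variation formula to rule out two distinct minimizing $N$-geodesics ending at the same non-cut point, and the inverse function theorem together with the index form to translate non-focality of $v$ into the local diffeomorphism property of $\exp^\perp$ at $v$. Once this identification is secured, the Morse-Bott conclusion and the deformation retract are essentially formal consequences of inspecting $|v|^2$ on a vector bundle.
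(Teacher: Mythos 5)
Your proposal is correct and follows essentially the same route as the paper: the paper's Fermi coordinates are exactly a local trivialization of $\nu$ pulled through $\exp_\nu$, its Proposition \ref{dsq-Fermi} is your identity $f\comp\exp^\perp(v)=|v|^2$, and its gradient computation and flow line $\gamma(d(q)e^{-2t})$ coincide with your $\exp^\perp(e^{-2t}v_0)$. The identification of $M-\cutn$ with the open star-shaped set $E\subset\nu$, which you rightly flag as the main obstacle, is handled in the paper by the characterization of $\mathpzc{s}(v)$ via focal points and multiple $N$-geodesics (Lemma \ref{chsnu}), matching your outline.
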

\noindent It is observed that this deformation takes infinite time. To obtain a strong deformation retract, one reparaterizes the flow lines to be defined over $[0,1]$. It can be shown (Lemma \ref{defretM-N}) that the cut locus $\mathrm{Cu}(N)$ is a strong deformation retract of $M-N$. A primary motivation for Theorem A came from understanding the cut locus of $N=O(n,\rbb)$ inside $M= M(n,\rbb)$, equipped with the Euclidean metric. We show in \S \ref{Sec: Example} that the cut locus is the set $\mathrm{Sing}$ of singular matrices and the deformation of its complement is not the Gram-Schmidt deformation but rather the deformation obtained from the polar decomposition, i.e., $A\in GL(n,\R)$ deforms to $A\big(\sqrt{A^T A}\,\big)^{-1}$. Combining with a result of Hebda \cite[Theorem 1.4]{Heb83} we are able to compute the local homology of $\mathrm{Sing}$ (cf. Lemma \ref{link-sing} and Corollary \ref{locsinghom}).
\begin{thmb}
\textit{For $A\in M(n,\R)$
\bgd
H_{n^2-1-i}(\mathrm{Sing},\mathrm{Sing} -A)\cong \widetilde{H}^i(O(n-k,\R))
\edd
where $A\in \mathrm{Sing}$ has rank $k<n$. }
\end{thmb}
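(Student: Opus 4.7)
The plan is to deduce Theorem B from two ingredients. The first is the identification, carried out in Section \ref{Sec: Example}, of the cut locus of $N=O(n,\R)$ inside $M=M(n,\R)$ (with its Euclidean metric) as the singular set $\mathrm{Sing}$. The second is Hebda's result \cite[Theorem 1.4]{Heb83}: for a closed submanifold $N$ of an $m$-dimensional Riemannian manifold $M$ and any $p\in\mathrm{Cu}(N)$, it supplies an isomorphism
\[
H_{m-1-i}\bigl(\mathrm{Cu}(N),\,\mathrm{Cu}(N)-p\bigr)\;\cong\;\widetilde{H}^{i}(A_p),
\]
where $A_p\subset N$ denotes the set of points of $N$ closest to $p$. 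Since $\dim M(n,\R)=n^2$, Theorem B reduces to the determination, for each $A\in\mathrm{Sing}$ of rank $k$, of the nearest-point set $A_A\subset O(n,\R)$; this is precisely Lemma \ref{link-sing}.

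To compute $A_A$ I would use the singular value decomposition. Writing $A=U\Sigma V^T$ with $\Sigma=\mathrm{diag}(\sigma_1,\ldots,\sigma_k,0,\ldots,0)$ and $\sigma_1\ge\cdots\ge\sigma_k>0$, bi-invariance of the Frobenius norm under $O(n,\R)$ gives, for every $Q\in O(n,\R)$ and $R:=U^{T}QV$,
\[
\|A-Q\|_F^{2}\;=\;\|\Sigma-R\|_F^{2}\;=\;\|\Sigma\|_F^{2}+n-2\sum_{i=1}^{k}\sigma_i R_{ii}.
\]
Minimizing over $Q$ is therefore equivalent to maximizing $\sum_{i=1}^{k}\sigma_i R_{ii}$ over $R\in O(n,\R)$. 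Because each $|R_{ii}|\le 1$ and each $\sigma_i>0$, the maximum is attained precisely when $R_{ii}=1$ for $i=1,\ldots,k$, which (using that the rows and columns of $R$ are unit vectors) forces $R=\mathrm{diag}(I_k,R')$ with $R'\in O(n-k,\R)$ arbitrary. Hence
\[
A_A\;=\;\bigl\{\,U\,\mathrm{diag}(I_k,R')\,V^T:R'\in O(n-k,\R)\,\bigr\}\;\cong\;O(n-k,\R).
\]
Substituting this identification into Hebda's formula with $m=n^{2}$ yields the asserted isomorphism.

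The main obstacle I anticipate is verifying that the hypotheses of Hebda's theorem apply uniformly in the rank $k$; specifically, that $A_A$ is a smooth closed submanifold of $O(n,\R)$ with a well-behaved focal structure. This is automatic here, since $A_A$ is a left-right translate by $U$ and $V^T$ of the block subgroup $\mathrm{diag}(I_k,O(n-k,\R))\subset O(n,\R)$, hence a smooth embedded copy of $O(n-k,\R)$. A secondary subtlety is the non-uniqueness of the SVD when the positive singular values repeat or when $k<n$; this alters the choice of $U$ and $V$ but not the set $A_A$ itself, which is why the answer depends only on the corank $n-k$.
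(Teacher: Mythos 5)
Your proposal is correct and follows essentially the same route as the paper: identify $\mathrm{Sing}$ with $\mathrm{Cu}(O(n,\R))$, compute the nearest-point set of a rank-$k$ singular matrix via the SVD to be a translate of $\mathrm{diag}(I_k,O(n-k,\R))\cong O(n-k,\R)$, and feed this into Hebda's duality with ambient dimension $n^2$. The only points worth making explicit (as the paper does) are that Hebda's isomorphism is stated for the link $\Lambda(A,N)$ of unit tangent directions rather than for the nearest-point set --- these coincide here because geodesics in Euclidean $M(n,\R)$ are straight lines emanating from $A$ --- and that $\check{\textup{C}}$ech and singular cohomology agree on the manifold $O(n-k,\R)$; your worry about $A_A$ needing a ``well-behaved focal structure'' is unnecessary, since Hebda's theorem places no such hypothesis on the equidistant set.
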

\noindent
\hf When the cut locus is empty, we deduce that $M$ is diffeomorphic to the normal bundle $\nu$ of $N$ in $M$. In particular, $M$ deforms to $N$. Among applications, we discuss two families of examples. We reprove the known fact that $GL(n,R)$ deforms to $O(n,\R)$ for any choice of left-invariant metric on $GL(n,\R)$ which is right-$O(n,\R)$-invariant. However, this deformation is not obtained topologically but by Morse-Bott flows. For a natural choice of such a metric, this deformation \eqref{GLdefOver2} is not the Gram-Schmidt deformation but one obtained from the polar decomposition. We also consider $U(p,q)$, the group preserving the indefinite form of signature $(p,q)$ on $\C^n$. We show (Theorem \ref{mainthm}) that $U(p,q)$ deforms to $U(p)\times U(q)$ for the left-invariant metric given by $\left\langle X,Y\right\rangle:=\textup{tr}(X^\ast Y)$. In particular, we show that the exponential map is surjective for $U(p,q)$ (Corollary \ref{expsurj}). To our knowledge, this method is different from the standard proof. \\
\hf For a Riemannian manifold we have the exponential map at $p\in M$, $\exp_p:T_pM\to M$. Let $\nu$ denote the normal bundle of $N$ in $M$. We will modify the exponential map (see \S \ref{Sec: Thom}) to define the \emph{rescaled exponential} $\widetilde{\exp}:D(\nu)\to M$, the domain of which is the unit disk bundle of $\nu$. The main result (Theorem \ref{Thomsp}) here is the observation that there is a connection between the cut locus $\mathrm{Cu}(N)$ and Thom space $\mathrm{Th}(\nu):=D(\nu)/S(\nu)$ of $\nu$. 
\begin{thmc}
\textit{ Let $N$ be an embedded submanifold inside a closed, connected Riemannian manifold $M$. If $\nu$ denotes the normal bundle of $N$ in $M$, then there is a homeomorphism
\begin{displaymath}
\widetilde{\exp}:D(\nu)/S(\nu) \xrightarrow{\cong}M/\mathrm{Cu}(N).
\end{displaymath}}
\end{thmc}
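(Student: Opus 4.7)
The plan is to produce the homeomorphism as the descent of the rescaled exponential $\widetilde{\exp}\colon D(\nu)\to M$ itself, after showing that it sends $S(\nu)$ onto $\cutn$ and is a bijection on the open disk bundle. Concretely, for a unit normal vector $u\in S(\nu)_p$ let
\[
\tau(u)\;:=\;\sup\bigl\{t\ge 0 \,\big|\, d_N(\exp_p(tu))=t\bigr\}
\]
be the cut time along $\gamma_u(t)=\exp_p(tu)$; since $M$ is closed, $\tau$ is finite and positive. Then $\widetilde{\exp}(sv):=\exp_p(s\,\tau(v)\,v)$ for $v\in S(\nu)_p$, $s\in[0,1]$, extended by $\widetilde{\exp}(0_p)=p$, reparametrises the minimising normal geodesic from $p$ over $[0,1]$.

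First I would verify that $\tau\colon S(\nu)\to\Rbb_{>0}$ is continuous. This is the classical continuity of the cut time along variations of initial data; it rests on the standard dichotomy that any $q=\exp_p(\tau(u)u)\in\cutn$ is either a focal point of $N$ along $\gamma_u$ or the endpoint of at least two distinct minimising normal geodesics from $N$, combined with compactness of $M$ to extract convergent subsequences of competing minimisers. Granted this, $\widetilde{\exp}$ is continuous on the compact disk bundle $D(\nu)$ and smooth on $\mathrm{int}(D(\nu))$, where $\tau$ is smooth.

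Next I establish bijectivity on the two strata of $D(\nu)$. Any $q\in M-\cutn$ is reached by a \emph{unique} minimising geodesic from $N$, which meets $N$ orthogonally at some $p\in N$ with $q=\exp_p(tu)$, $u\in S(\nu)_p$, and $0\le t<\tau(u)$; then $v=(t/\tau(u))\,u\in \mathrm{int}(D(\nu))$ is the unique preimage. By definition of $\tau$, the image $\widetilde{\exp}(u)=\exp_p(\tau(u)u)$ lies in $\cutn$ for every $u\in S(\nu)$, and conversely every point of $\cutn$ is the endpoint of some minimising normal geodesic of length $\tau(u)$, so $\widetilde{\exp}(S(\nu))=\cutn$.

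Combining these, $\widetilde{\exp}$ descends to a continuous bijection $D(\nu)/S(\nu)\to M/\cutn$. Since $M$ is closed, $D(\nu)$ and hence $D(\nu)/S(\nu)$ are compact; and $M/\cutn$ is Hausdorff because $\cutn$ is closed in $M$. A continuous bijection from a compact space to a Hausdorff space is a homeomorphism, finishing the proof. The main obstacle I anticipate is the continuity of $\tau$, particularly at those $u$ for which $\exp_p(\tau(u)u)$ is a focal point rather than the meeting point of two distinct minimisers; this is handled by a compactness-and-limits argument together with the first variation formula to rule out loss of minimality in the limit.
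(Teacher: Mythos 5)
Your proposal is correct and follows essentially the same route as the paper: the rescaled exponential built from the cut-time function $\mathpzc{s}$ (your $\tau$), whose continuity is established exactly via the dichotomy (focal point or two distinct $N$-minimisers) that you identify, followed by bijectivity on the open disk bundle versus $S(\nu)$ and the compact-to-Hausdorff argument. The only cosmetic difference is that the paper deduces injectivity on the interior directly from the characterisation lemma for $\mathpzc{s}$, whereas you invoke uniqueness of minimisers off the cut locus (i.e.\ $\mathrm{Se}(N)\subseteq\mathrm{Cu}(N)$); both are equivalent and present in the paper.
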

\noindent This immediately leads to a long exact sequence in homology (see \eqref{lesThom})
\begin{displaymath}
	\cdots\to  H_j(\mathrm{Cu}(N)) \stackrel{i_*}{\longrightarrow}H_j(M)\stackrel{q}{\longrightarrow} \widetilde{H}_j(\mathrm{Th}(\nu))\stackrel{\partial}{\longrightarrow} H_{j-1}(\mathrm{Cu}(N))\to \cdots.
\end{displaymath}
This is an useful tool in characterizing the homotopy type of the cut locus. We list a few applications and related results.
\begin{thmd}
\textit{Let $N$ be a homology $k$-sphere embedded in a Riemannian manifold $M^d$ homeomorphic to $S^d$.}\\
(1) \textit{If $d\ge k+3$, then $\mathrm{Cu}(N)$ is homotopy equivalent to $S^{d-k-1}$. Moreover, if $M,N$ are real analytic and the embedding is real analytic, then $\cutn$ is a simpicial complex of dimension at most $d-1$.}\\
(2) \textit{If $d=k+2$, then $\cutn$ has the homology of $S^1$. There exists homology $3$-spheres in $S^5$ for which $\cutn\simeq S^1$. However, for non-trivial knots $K$ in $S^3$, the cut locus is not homotopy equivalent to $S^1$. }
\end{thmd}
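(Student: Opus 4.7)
The plan is to combine the Thom-space homeomorphism of Theorem C with the retraction lemma (the cut locus is a strong deformation retract of $M-N$) and reduce both parts to manipulations with the long exact sequence $\cdots\to H_j(\mathrm{Cu}(N))\to H_j(M)\to \widetilde{H}_j(\mathrm{Th}(\nu))\to H_{j-1}(\mathrm{Cu}(N))\to\cdots$ displayed in the introduction. Since both $M\approx S^d$ and the homology $k$-sphere $N$ are orientable, the Whitney-class identity $w(TM|_N)=w(TN)\,w(\nu)$ forces $w_1(\nu)=0$, so the rank-$(d-k)$ normal bundle is orientable; Thom isomorphism then gives $\widetilde{H}_j(\mathrm{Th}(\nu))\cong H_{j-(d-k)}(N)$, which is $\mathbb{Z}$ exactly at $j=d-k$ and $j=d$. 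Feeding this into the sequence, together with the input that $\mathrm{Cu}(N)$ has empty interior in $M$ (so $H_d(\mathrm{Cu}(N))=0$ and the collapse $M\to\mathrm{Th}(\nu)$ is degree one on top classes), one reads off $H_\ast(\mathrm{Cu}(N))\cong H_\ast(S^{d-k-1})$ in both parts.

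For Part (1), with $d-k\ge 3$, the homology calculation must be upgraded to a homotopy equivalence $\mathrm{Cu}(N)\simeq S^{d-k-1}$. I would first show $\mathrm{Cu}(N)$ is simply connected: by the retraction lemma $\pi_1(\mathrm{Cu}(N))\cong \pi_1(M-N)$, and a general-position argument in the simply connected $M\approx S^d$ gives $\pi_1(M-N)=1$ since $N$ has codimension at least $3$. Because $d-k-1\ge 2$, Hurewicz then produces a map $S^{d-k-1}\to\mathrm{Cu}(N)$ representing a generator of $H_{d-k-1}$; as both spaces are simply connected with the same integral homology, this map is a weak equivalence. Whitehead's theorem upgrades it to a genuine homotopy equivalence, using that $\mathrm{Cu}(N)\simeq M-N$ inherits the CW-homotopy type of the open manifold $M-N$. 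The real-analytic refinement is a citation: under real-analytic data $\mathrm{Cu}(N)$ is a subanalytic subset of $M$, hence admits a simplicial triangulation (Hironaka--Hardt--Lojasiewicz), and has dimension at most $d-1$ because its interior is empty.

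For Part (2), where $d=k+2$, the same LES immediately yields $H_\ast(\mathrm{Cu}(N))\cong H_\ast(S^1)$, but the homotopy type is no longer forced, since $\pi_1$ is not controlled by codimension. To realize the positive example, I would begin with the totally geodesic $S^3\subset S^5$ whose cut locus is the polar great circle, then modify the embedding to a Brieskorn-type homology sphere $\Sigma^3$ via an ambient isotopy small enough to preserve the cut-locus structure. For the negative direction, a nontrivial knot $K\subset S^3$ satisfies $\pi_1(\mathrm{Cu}(K))\cong \pi_1(S^3-K)$ by the retraction lemma; this is the knot group, which is never infinite cyclic for a nontrivial knot (its commutator subgroup is always nontrivial), so $\mathrm{Cu}(K)\not\simeq S^1$ in spite of the correct homology.

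The chief obstacle I foresee is realizing the positive example in Part (2): the LES only pins down homology, and producing an explicit $\Sigma^3\subset S^5$ with cut locus genuinely homotopy equivalent to $S^1$ requires geometric input rather than homological bookkeeping. The Whitehead step in Part (1) is the other subtle point, but the simple-connectivity argument and the CW-homotopy type inherited from $M-N$ should handle it uniformly in the smooth and real-analytic cases.
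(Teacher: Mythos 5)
Most of your argument tracks the paper's own proof closely: the homology of $\cutn$ is extracted from the long exact sequence \eqref{lesThom} (the paper packages the same computation via Lemma \ref{homcutn} and Poincar\'{e}--Lefschetz duality $H^j(M,N)\cong H_{d-j}(M-N)$, which the remark after \eqref{lesThom} shows is equivalent to your Thom-isomorphism bookkeeping); simple connectivity of $M-N$ in codimension $\ge 3$ comes from the general-position statement \eqref{isopi}; and the Hurewicz--Whitehead upgrade together with Lemma \ref{defretM-N} gives the homotopy equivalence in Part (1). Your appeal to subanalytic triangulation for the analytic refinement differs from the paper, which instead extends Buchner's broken-geodesic argument (Theorem \ref{Buchner}); either route is legitimate. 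Your negative direction in Part (2) via the knot group being non-cyclic for nontrivial knots is also essentially the paper's argument.

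There is, however, one genuine gap, and it is exactly where you sensed trouble: the positive example in Part (2). You propose to start from the totally geodesic $S^3\subset S^5$ and ``modify the embedding to a Brieskorn-type homology sphere $\Sigma^3$ via an ambient isotopy.'' This cannot work even in principle: an ambient isotopy carries the submanifold to a homeomorphic copy of itself, and a homology $3$-sphere with nontrivial fundamental group is not homeomorphic to $S^3$, so no ambient isotopy of $S^5$ takes the round $S^3$ to such a $\Sigma^3$. (Indeed, Corollary \ref{htpy-cut-locus} shows ambient isotopy preserves the homotopy type of the cut locus precisely because it preserves the embedded submanifold up to ambient homeomorphism.) What is actually needed is an existence theorem for embeddings with controlled complement; the paper cites Plotnick \cite{Plo82}, who produced homology $3$-spheres (obtained by $\frac{1}{2a}$ Dehn surgery on a knot) that embed smoothly in $S^5$ with complement a homotopy circle. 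Combined with Lemma \ref{defretM-N} this yields $\cutn\simeq S^1$ for those examples. Without such an input your construction does not produce the claimed example. A second, minor point: your Thom-space computation as stated assumes $N$ connected; for $k=0$ (two points) one has $\widetilde{H}_d(\mathrm{Th}(\nu))\cong\mathbb{Z}^2$ and the sequence $0\to H_d(M)\to\mathbb{Z}^2\to H_{d-1}(\cutn)\to 0$ still gives $H_{d-1}(\cutn)\cong\mathbb{Z}$, but this case deserves to be run separately as the paper does.
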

\noindent The above results are a combination of Theorem \ref{homsph}, Theorem \ref{Buchner} and Example \ref{codim2}. In general, the structure of the cut locus may be wild \cite{GlSi78, ItVi15, ItSa16}. Myers \cite{Mye35} had shown that if $M$ is a real analytic sphere, then $\mathrm{Cu}(p)$ is a finite tree each of whose edges is an analytic curve with finite length. Buchner \cite{Buc77} later generalized this result to cut locus of a point in higher dimensional manifolds. Theorem \ref{Buchner}, which states that the cut locus of an analytic submanifold (in an analytic manifold) is a simplicial complex, is a natural generalization of Buchner's result (and its proof). We attribute it to Buchner although it is not present in the original paper. This analyticity assumption also helps us to compute the homotopy type of the cut locus of a finite set of points in any closed, orientable, real analytic surface of genus $g$ (Theorem \ref{cutlocus-surface}). In Example \ref{codim2} we make some observations about the cut locus of embedded homology spheres of codimension $2$. This includes the case of real analytic knots in the round sphere $\mathbb{S}^3$.\\
\hf We apply our study of gradient of distance squared function to two families of Lie groups - $GL(n,\R)$ and $U(p,q)$. With a particular choice of left-invariant Riemannian metric which is right-invariant with respect to a maximally compact subgroup $K$, we analyze the geodesics and the cut locus of $K$. In both cases, we obtain that $G$ deforms to $K$ via Morse-Bott flow (Lemma \ref{CartanGLn} and Theorem \ref{mainthm}). Although these results are deducible from classical results of Cartan and Iwasawa, our method is geometric and specific to suitable choices of Riemannian metrics. It also makes very little use of structure theory of Lie algebras.\\[0.2cm]
\textbf{Organization of the paper:} In \S \ref{Sec: Preliminaries} we first recall basic definitions of Morse-Bott functions and cut locus of a subset (see \S \ref{Sec: Background}). In \S \ref{Sec: Example} we analyze the distance function from $O(n,\R)$ in $M(n,\R)$. This highlights and motivates Theorem A as well as allows for computation of local homology of singular matrices (Theorem B). In \S \ref{Sec: results} we first recall some relevant basic definitions from geometry (see \S \ref{Sec: basic results}). We make some observations about differentiablity of distance function (following Wolter \cite{Wol79}) and show that the cut locus is a simplicial complex for an analytic pair (following Buchner \cite{Buc77}). In \S \ref{Sec: Thom} we prove Theorem C and discuss some applications including Theorem D. In \S \ref{Sec: Morse-Bott} we prove Theorem A. In \S \ref{Sec: Lie} we discuss two specific examples - we analyze the cut locus of $O(n,\R)$ inside $GL(n,\R)$ in \S \ref{Sec: GLnSOn} and the cut locus of $U(p)\times U(q)$ inside $U(p,q)$ in \S \ref{Sec: Upq}. In appendix \ref{Sec: cts-s} we prove Proposition \ref{snucts}, the continuity of the map $\mathpzc{s}$ (see Definition \ref{snu}). This result is crucial for \S \ref{Sec: Thom}. In appendix \ref{Sec: der-app} we compute the derivative of the square root map for positive definite matrices (Lemma \ref{lemma: A.1}). We also analyze the differentiability of the map $A\mapsto \trace{\sqrt{A^TA}}$ in Lemma \ref{lemma: A.2}.\\


\section{Preliminaries}\label{Sec: Preliminaries}

\hf We recall the notion of Morse function and Morse-Bott function in \S \ref{Sec: Background}, keeping in mind the square of the distance function from a submanifold being a potential Morse-Bott function which we will analyze in \S  \ref{Sec: Morse-Bott}. We also recall the definition of cut locus of a subset in a Riemannian manifold. In Example \ref{join} we observe that the join of spheres being a sphere can be observed geometrically via cut locus. In \S \ref{Sec: Example} we analyze the cut locus of orthogonal matrices and compute the relative homology of the cut locus \eqref{locsing}. Along the way, we note that the geometric deformation of $GL(n,\R)$ to $O(n,\R)$, obtained via the distance squared function, is \textit{not} the Gram-Schmidt deformation.

\subsection{Background}\label{Sec: Background}

\noindent \hspace*{0.5cm}Given a smooth $n$-dimensonal manifold $M$, we say that a point $p\in M$ is a {\emph{critical point}} of a smooth function $f:M\to \rbb$ if 
\begin{displaymath}
df_p:T_pM\to T_{f(p)}\rbb
\end{displaymath}
vanishes. In a coordinate neighborhood $(\phi=(x_1,x_2,\ldots,x_n),U)$ around $p$ for all $j=1,2,\ldots,n$ we have 
\begin{displaymath}
\delbydel{(f\comp \phi^{-1})}{x_j}(\phi(p))=0. 
\end{displaymath}
A critical point $p$ is called {\emph{non-degenerate}} if determinant of the Hessian matrix
\begin{displaymath}
\hess_p(f) \defeq \left(\delbydel{^2(f\comp \phi^{-1})}{x_i\partial x_j}(\phi(p))\right)
\end{displaymath}
is non-zero. Let us denote the set of all critical points of $f$ by $\crf$. If all the critical points are non-degenerate, then $f$ is said to be a {\emph{Morse function}}. Morse-Bott functions are generalizations of Morse functions, where we are allowed to have non-degenerate critical submanifold. 

\begin{defn}[Morse-Bott functions] 
Let $M$ be a Riemannian manifold. A smooth submanifold $ N\subset M $ is said to be {\emph{non-degenerate critical submanifold}} of $f$ if $N\subseteq \crf$ and for any $p\in N$, $\hess_{p}(f)$ is non-degenerate in the direction normal to $N$ at $p$. The function $f$ is said to be {\emph{Morse-Bott}} if the connected components of $ \crf $ are non-degenerate critical submanifolds.
\end{defn}
\noindent Note that $\hess_p(f)$ is non-degenerate in the direction normal to $N$ at $p$ means for any $V\in (T_pN)^\perp$ there exists $W\in  (T_pN)^\perp$ such that $\hess_p(f)(V,W)\neq 0$.
\begin{eg}\label{eg: circle}
Let $M=\rbb^{n+1}$ equipped with the Euclidean metric $d$. If $N=\sbb^n$ is the unit sphere, then the distance between a point $\bf{p}\in \rbb^{n+1}$ and $N$ is given by
\begin{displaymath}
d(\bf{p},N) \defeq \inf_{\bf{q}\in N} d(\bf{p},\bf{q}).
\end{displaymath}
We shall denote by $d^2$ the square of the distance. Now consider the function
\begin{displaymath}
f:M\to \rbb,~~\bf{x}\mapsto d^2(\bf{x},N)= \begin{cases}
	\|\bf{x}\|^2-1, &\text{ if } \|\bf{x}\|\geq 1  \\
	1-\|\bf{x}\|^2,  &\text{ if } \|\bf{x}\|<1.
	\end{cases}
\end{displaymath}
The function $f:M-\{\mathbf{0}\}$ is a Morse-Bott function with $N=\sbb^n$ as the critical submanifold. 
\end{eg}
\noindent The trace function on $SO(n,\rbb), U(n,\cbb)$ and $Sp(n,\cbb)$ is a Morse-Bott function (cf. \cite[p.~90, Exercise~22]{BaHu04}). We refer the interested reader to \cite{BaHu04} for basic results on Morse-Bott theory.\\
\hf We shall now define the cut locus for a point. The notion of cut locus was first introduced for convex surfaces by Poincar\'{e} \cite{Poin05} in 1905 under the name \textit{la ligne de partage} meaning \textit{the dividing line}. 
\begin{defn}[Cut locus]
Let $M$ be a complete Riemannian manifold and $p\in M$. If $\cu$ denotes the \emph{cut locus} of $p$, then a point $q\in \cu$ if there exists a minimal geodesic joining $p$ to $q$ any extension of which beyond $q$ is not minimal. 
\end{defn}
\noindent Recall that a minimal geodesic joining $p$ and $q$ is a geodesic that realizes the distance between $p$ and $q$. The existence of minimal geodesics joining two given points is implied by completeness of the Riemannian manifold. Therefore, in almost all of the examples, the manifolds under consideration will be complete Riemannian manifolds. When $M=\sbb^n$, i.e., an $n$-sphere with the round metric induced from $\rbb^{n+1}$, for any $p\in \sbb^n,$ the cut locus $\cu$ will be the corresponding antipodal point. Later, in \S \ref{Sec: Thom} Definition \ref{cutlocus2}, we give a slightly different but equivalent definition of cut locus, following \cite[\S4.1]{Sak96}. \\
\hspace*{0.5cm}In order to have a definition of the cut locus for a submanifold (or a subset), we need to generalize the notion of a minimal geodesic.
\begin{defn}\label{distmin}
A geodesic $\gamma $ is called a \emph{distance minimal geodesic} joining $N$ to $p$ if there exists $q\in N$ such that $\gamma$ is a minimal geodesic joining $q$ to $p$ and $l(\gamma)= d(p,N) $. We will refer to such geodesics as \textit{$N$-geodesics}.
\end{defn}
\noindent If $N$ is an embedded submanifold, then an $N$-geodesic is necessarily orthogonal to $N$. This follows from the first variational principle. We are ready to define the cut locus for $N\subset M$. 
\begin{defn}[Cut locus]\label{cutlocus1}
Let $M$ be a Riemannian manifold and $N$ be any non-empty subset of $M.$ If $\cutn$ denotes the \emph{cut locus of $N$}, then we say that $q\in \cutn $ if and only if there exists a distance minimal geodesic joining $N$ to $q$ such that any extension of it beyond $q$ is not a distance minimal geodesic.
\end{defn}
\noindent The cut locus of a sphere (refer example \ref{eg: circle}) is its centre. The set $\cu$ is closed \cite[exercise 28.4, p. 363]{Po01}. In general, the cut locus of a subset need not be closed, as the following example from \cite{TaSa16} illustrates.
\begin{eg}[Sabau-Tanaka 2016]
Consider $\rbb^2$ with the Euclidean inner product. Let $\curlybracket{\theta_n}$, with $\theta_1\in (0,\pi)$, be a decreasing sequence converging to $0$. Let $\overline{B(\mathbf{0},1)}$ be the closed unit ball centered at $(0,0)$. Let $B_n:=B(q_n,1)$ be the open ball with radius $1$ and centered at $q_n$. We have chosen $q_n$ such that it does not belong to $\overline{B(\mathbf{0},1)}$ and denotes the center of the circle passing through $p_n=(\cos\theta_n,\sin\theta_n)$ and $p_{n+1} = (\cos\theta_{n+1},\sin\theta_{n+1})$. Define $ N\subset \rbb^2$ by
\begin{displaymath}
N \defeq \overline{B(\mathbf{0},1)}\setminus \cup_{n=1}^\infty B(q_n,1).
\end{displaymath}
\begin{figure}[h!]
	\centering
  	\includegraphics[scale=0.5]{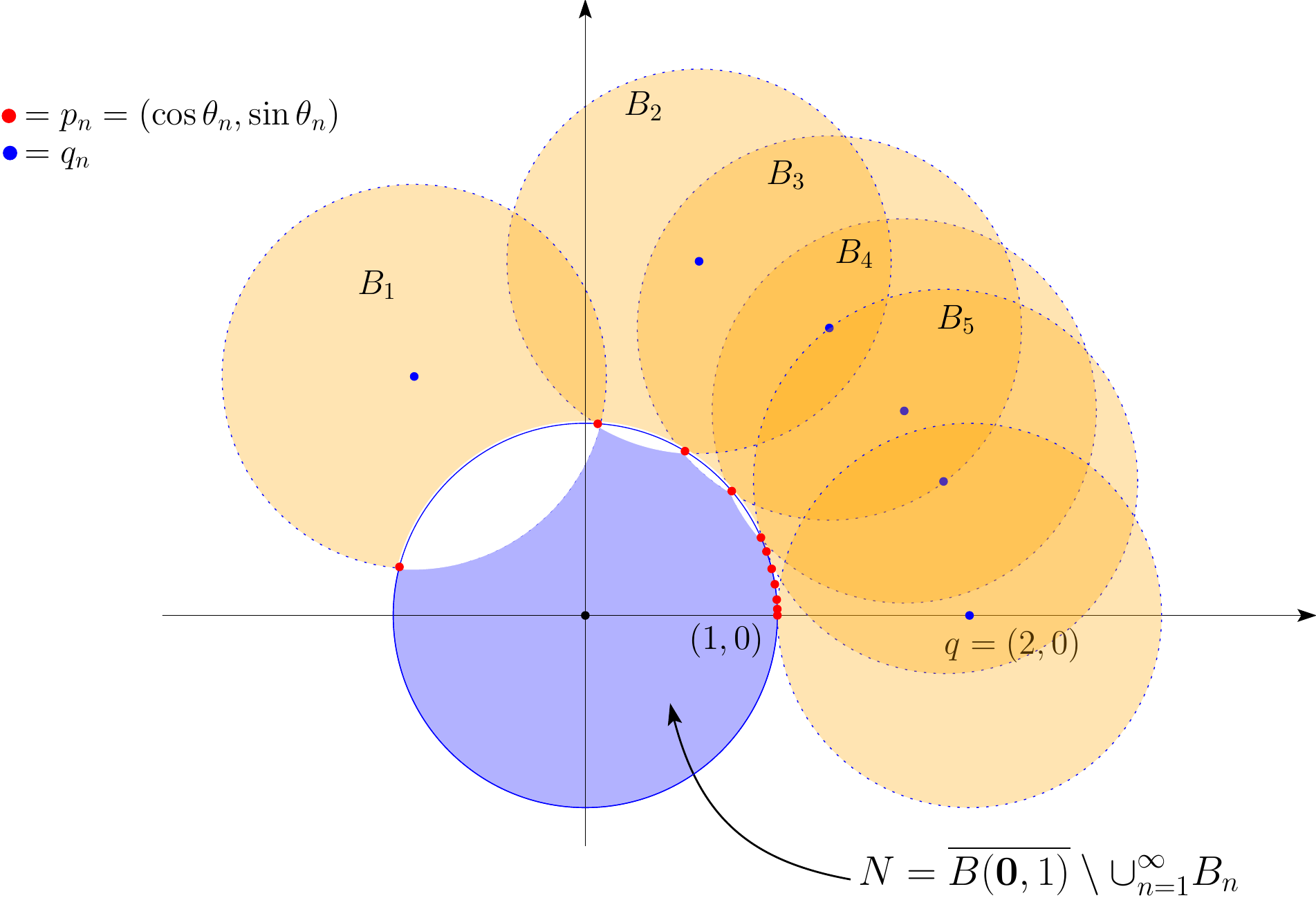}
	\caption{Cut locus need not be closed}
\end{figure}
Note that $N$ is a closed set and the sequence $\curlybracket{q_n}$ of cut points of $N$ converges to the point $(2,0)$. However, $(2,0)$ is not a cut point of $N$.
\end{eg}
In Theorem \ref{thm: Theorem 2} we will prove that for a submanifold $N$ the set $\cutn$ is closed by showing that it is the closure of the set of all points in $M$ which has at least two minimal geodesics joining $N$ to $p\in M$. 
\begin{eg}[Join induced by cut locus]\label{join}
Let $\mathbb{S}_i^k \hookrightarrow \mathbb{S}^n$ denote the embedding of the $k$-sphere in the first $k+1$ coordinates while $\mathbb{S}^{n-k-1}_l$ denote the embedding of the $(n-k-1)$-sphere in the last $n-k$ coordinates. It can be seen that $\textup{Cu}(\mathbb{S}_i^k)=\mathbb{S}_l^{n-k-1}$. In fact, starting at a point $p\in \mathbb{S}^k_i$ and travelling along a unit speed geodesic in a direction normal to $T_p\mathbb{S}^k_i$, we obtain a cut point at a distance $\pi/2$ from $\mathbb{S}^k_i$.
\begin{figure}[h!]
\centering
\includegraphics[scale=0.45]{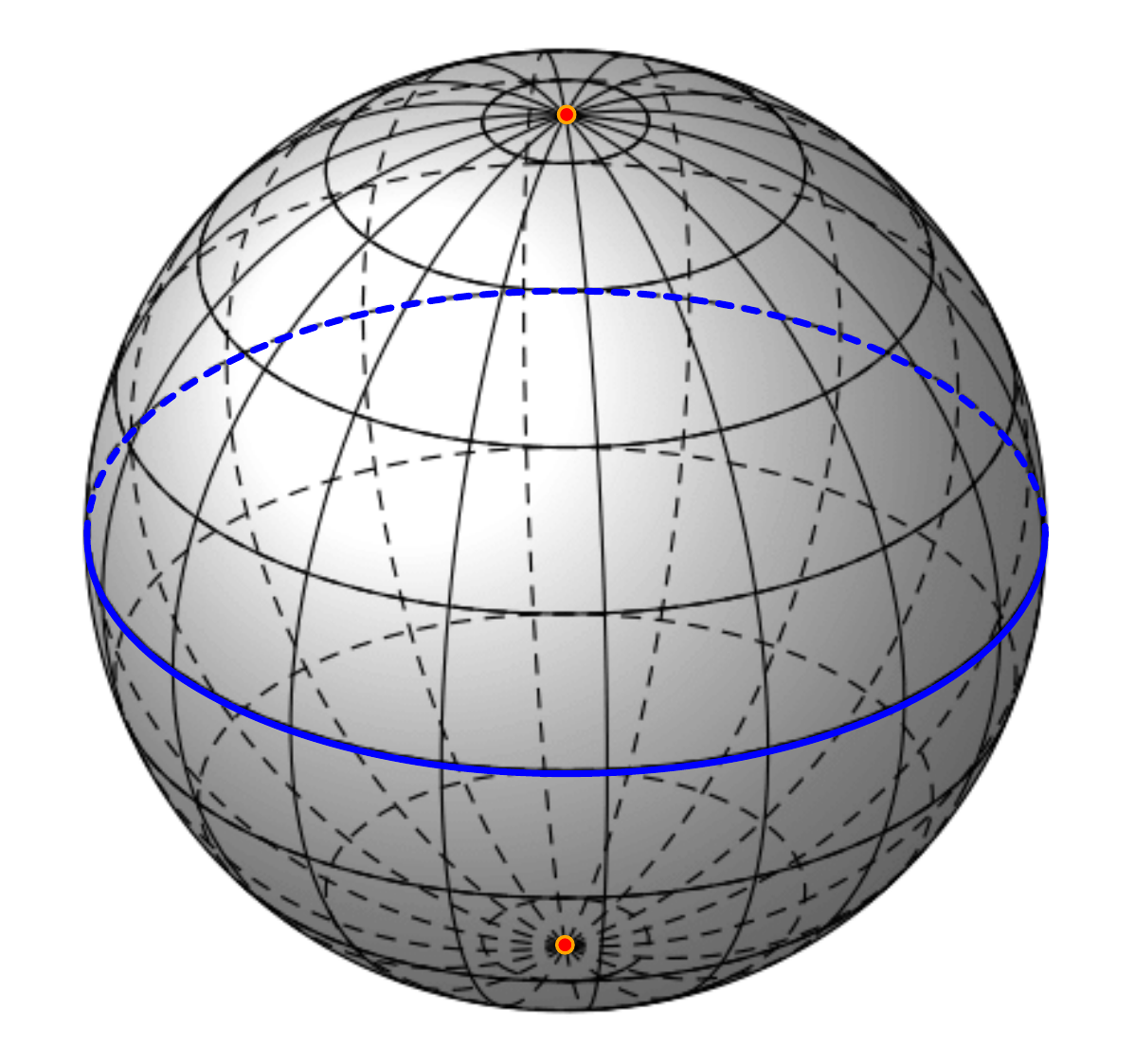}
\caption{The cut locus of the equator in $\mathbb{S}^2$}
\end{figure}
Moreover, in this case $\textup{Cu}(\mathbb{S}_l^{n-k-1})=\mathbb{S}_i^{k}$ and the $n$-sphere $\mathbb{S}^n$ can be expressed as the union of geodesic segments joining $\mathbb{S}_i^k$ to $\mathbb{S}_l^{n-k-1}$. This is a geometric variant of the fact that the $n$-sphere is the (topological) join of $S^k$ and $S^{n-k-1}$. We also observe that $\mathbb{S}^n - \mathbb{S}_l^{n-k-1}$ deforms to $\mathbb{S}_i^k$ while $\mathbb{S}^n - \mathbb{S}_i^{k}$ deforms to $\mathbb{S}_l^{n-k-1}$.\\
\hf In our example, let $\nu_i^{n-k}$ and $\nu_l^{k+1}$ denote the normal bundles of $\mathbb{S}_i^k$ and $\mathbb{S}_l^{n-k-1}$ respectively. We may express $\mathbb{S}^n$ as the union of normal disk bundles $D(\nu_i)$ and $D(\nu_l)$. These disk bundles are trivial and are glued along their common boundary $\mathbb{S}^k_i\times \mathbb{S}_l^{n-k-1}$ to produce $\mathbb{S}^n$. Moreover, $\mathbb{S}^k_i$ is an analytic submanifold of the real analytic Riemannian manifold $\mathbb{S}^n$ with the round metric. There is a generalization of this phenomenon due to Omori \cite[Lemmas 1.3-1.5, Theorem 3.1]{Omo68}.
\begin{thm}[Omori 1968]
Let $M$ be a compact, connected, real analytic Riemannian manifold which has an analytic submanifold $N$ such that the cut  point of $N$ with respect to every geodesic, which starts from $N$ and whose initial  direction is orthogonal to $N$ has a constant distance $\pi$ from $N$. Then $N'=\cutn$ is an analytic submanifold and $M$ has a decomposition $M = DN\cup_{\varphi} DN'$, where $DN,  DN'$ are normal disk bundles of $N, N'$ respectively.
\end{thm}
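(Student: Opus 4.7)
The plan is to analyse the rescaled exponential $E : \overline{D}_\pi(\nu_N) \to M$ on the closed radius-$\pi$ subbundle of the normal bundle $\nu_N$ of $N$. By compactness of $M$ together with the constant cut-distance hypothesis, the standard arguments (essentially those used in the paper's Theorem C) show that $E$ is surjective, that its restriction to the open disk bundle $D_\pi^\circ(\nu_N)$ is an analytic diffeomorphism onto $M\setminus N'$, and hence $E(S_\pi(\nu_N)) = N'$. I would then establish a duality between $N$ and $N'$: for any $q = \exp_p(v) \in N'$ with $v \in S_\pi(\nu_N)$ based at $p\in N$, reversing the minimal $N$-geodesic from $p$ to $q$ yields a minimal geodesic from $q$ to $p$ that cannot be extended past $p$ without violating the hypothesis for $N$, so $N \subseteq \mathrm{Cu}(N')$ and every cut distance from $N'$ back to $N$ again equals $\pi$.

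The crucial obstacle is showing that $N' = E(S_\pi(\nu_N))$ is an analytic submanifold. My plan is to verify that the analytic map $\Phi := E|_{S_\pi(\nu_N)}$ has locally constant rank, so that $N'$ emerges as an analytic submanifold of $M$ via the constant-rank theorem. A direct Jacobi-field computation identifies $\ker d\Phi_v$ at $v \in S_\pi(\nu_N)$ with the space of initial data producing Jacobi fields along $\gamma_v$ that vanish at $t = \pi$; real-analyticity of the metric and of $N$ makes the dimension of this kernel upper semicontinuous, and a real-analytic identity-principle argument (applied orbit-by-orbit of the natural structure-group action on $\nu_N$) promotes constancy on a dense open set to constancy everywhere. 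Once $N'$ is an analytic submanifold, Theorem A applied both to $d_N^2$ on $M\setminus N'$ and, symmetrically, to $d_{N'}^2$ on $M\setminus N$ makes the duality rigorous: their Morse--Bott gradient flows foliate $M \setminus (N\cup N')$ by the same family of geodesic segments, oppositely parametrised.

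For the decomposition I would set $DN := E(\overline{D}_{\pi/2}(\nu_N))$ and $DN' := E'(\overline{D}_{\pi/2}(\nu_{N'}))$, where $E'$ is the rescaled exponential of $N'$ and $\nu_{N'}$ its normal bundle. By the diffeomorphism established in the first step (and its analogue for $N'$), these are analytically diffeomorphic to the closed normal disk bundles of radius $\pi/2$ of $N$ and $N'$, and together they cover $M$. Their common boundary is the analytic hypersurface $H := \{x\in M : d_N(x) = d_{N'}(x) = \pi/2\}$, and the gluing diffeomorphism $\varphi : \partial DN \to \partial DN'$ is obtained by following the gradient flow of $d_N^2$ (equivalently, in reverse, the gradient flow of $d_{N'}^2$) across $H$. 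The hardest step remains the constant-rank analysis in the second paragraph; once that is in hand, the surjectivity and Morse--Bott structure from Theorem A assemble the decomposition with little additional effort.
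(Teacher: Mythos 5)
First, a point of comparison: the paper does not prove this statement at all --- it is quoted as a known result of Omori \cite[Lemmas 1.3--1.5, Theorem 3.1]{Omo68} to contextualize Example \ref{join} --- so your attempt must be judged on its own. The parts of your outline surrounding the hard step are sound: under the hypothesis one has $\mathpzc{s}\equiv\pi$ on $S(\nu)$, so by the characterization of cut points (Lemma \ref{chsnu}) the normal exponential is injective and nonsingular on the open $\pi$-disk bundle; every point of $N'$ lies at distance exactly $\pi$ from $N$, whence $d_N+d_{N'}=\pi$ on $M$ (triangle inequality one way, continuation of the normal geodesic the other), which makes the reversed geodesics $N'$-geodesics and, once $N'$ is known to be an analytic submanifold, assembles the two disk bundles along the hypersurface $\{d_N=\pi/2\}$ as you describe.

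The gap is precisely where you locate the difficulty, and your proposed fix does not close it, for two reasons. (1) For a real-analytic map, semicontinuity of the rank plus analyticity only yields that the rank equals its generic (maximal) value off a proper analytic subset; there is no identity principle that upgrades constancy on a dense open set to constancy on the degeneracy locus, and the ``orbit-by-orbit'' device is not available because the structure group of $\nu$ does not act transitively on $S(\nu)$ and the hypotheses supply no group action sweeping out the sphere bundle. Any genuine proof must use the constant-cut-distance hypothesis itself to control the focal behaviour at $t=\pi$, which is what Omori does. (2) Even granting locally constant rank, the image of a proper constant-rank analytic map need not be a submanifold: a figure-eight immersion of $S^1$ in the plane is such a map with non-manifold image. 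What is actually required is that $\Phi=\exp_\nu|_{S_\pi(\nu)}$ be a locally trivial fibration onto $N'$ whose fibre over $q$ is the link $\Lambda(q,N)$, a sphere of locally constant dimension; establishing this is the substance of Omori's Lemmas 1.3--1.5 and is the real content of the theorem. As written, your second paragraph asserts the conclusion of that analysis rather than supplying it.
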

\end{eg}

\subsection{An illuminating example}\label{Sec: Example}

\hf Let $ M = M(n,\rbb) $, the set of $n\times n$ matrices, and $ N=O(n,\rbb) $, set of all orthogonal $n\times n$ matrices. Let $A,B\in M(n,\rbb)$. We fix the standard flat Euclidean metric on $M(n,\rbb)$ by identifying it with $\mathbb{R}^{n^2}$. This induces a distance function given by
\begin{equation*}\label{Example 1 Eq: 1}
	d(A,B) \defeq \sqrt{\trace{(A-B)^T(A-B)}}
\end{equation*}
Consider the distance squared function
\begin{displaymath}
f: GL(n,\rbb)\to \rbb,~~ A\mapsto d^2(A, O(n,\rbb)).
\end{displaymath}
\begin{lmm}
The function $f$ can be explicitly expressed as 
\begin{equation}
\label{eq:d2On}f(A) = n + \trace{A^TA} - 2\trace{\sqrt{A^TA}}.
\end{equation} 
\end{lmm}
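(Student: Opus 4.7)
The plan is to compute the infimum of $d^2(A,Q)$ over $Q\in O(n,\R)$ directly, reducing the problem to maximizing a linear functional in $Q$ and then invoking the singular value / polar decomposition of $A$.

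First I would expand the squared distance using the definition of the trace inner product on $M(n,\R)$. Since $\trace{Q^TQ}=n$ for any $Q\in O(n,\R)$ and $\trace{Q^TA}=\trace{A^TQ}$, one gets
\begin{equation*}
d^2(A,Q)=\trace{(A-Q)^T(A-Q)}=\trace{A^TA}+n-2\,\trace{Q^TA}.
\end{equation*}
Since $O(n,\R)$ is compact the infimum is attained, and minimizing $d^2(A,Q)$ is equivalent to maximizing the function $Q\mapsto \trace{Q^TA}$ over $O(n,\R)$.

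Next I would use the singular value decomposition $A=U\Sigma V^T$, where $U,V\in O(n,\R)$ and $\Sigma=\operatorname{diag}(\sigma_1,\ldots,\sigma_n)$ with $\sigma_i\geq 0$. Substituting and using cyclicity of the trace,
\begin{equation*}
\trace{Q^TA}=\trace{Q^TU\Sigma V^T}=\trace{W\Sigma},\qquad W\defeq V^TQ^TU\in O(n,\R).
\end{equation*}
Since each column (and row) of an orthogonal matrix has unit norm, $|W_{ii}|\le 1$, so $\trace{W\Sigma}=\sum_i W_{ii}\sigma_i\le \sum_i \sigma_i$. Equality holds precisely when $W=I$, i.e. $Q=UV^T$ (which is an element of $O(n,\R)$), so the maximum equals $\sum_i\sigma_i$.

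Finally I would identify this sum as $\trace{\sqrt{A^TA}}$. From $A=U\Sigma V^T$ we get $A^TA=V\Sigma^2V^T$, and hence $\sqrt{A^TA}=V\Sigma V^T$, so $\trace{\sqrt{A^TA}}=\trace{\Sigma}=\sum_i\sigma_i$. Substituting back into the expansion of $d^2(A,Q)$ evaluated at the minimizer yields
\begin{equation*}
f(A)=\min_{Q\in O(n,\R)}d^2(A,Q)=\trace{A^TA}+n-2\,\trace{\sqrt{A^TA}},
\end{equation*}
which is the claimed formula. The only nontrivial step is the inequality $\trace{W\Sigma}\le \sum_i\sigma_i$ for $W\in O(n,\R)$ and $\Sigma\ge 0$ diagonal; this is elementary but is the heart of the argument, as it singles out the orthogonal factor $UV^T$ from the polar decomposition of $A$ as the nearest orthogonal matrix. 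No obstruction arises since compactness of $O(n,\R)$ guarantees the infimum is realized, and since $A\in GL(n,\R)$ the matrix $\sqrt{A^TA}$ is positive definite so all the above quantities are well-defined.
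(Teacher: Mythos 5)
Your proof is correct and follows essentially the same route as the paper: expand the squared distance, reduce to maximizing $\trace{A^TQ}$ over $O(n,\R)$, and use the SVD together with the bound $|W_{ii}|\le 1$ to identify the maximum as $\sum_i\sigma_i=\trace{\sqrt{A^TA}}$. The paper additionally extends the formula to singular $A$ by density of $GL(n,\R)$ in $M(n,\R)$, but for the statement as given (with $f$ defined on $GL(n,\R)$) your argument suffices.
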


\begin{proof}
 Let $A\in GL(n,\rbb)$ be any invertible matrix. Then,
\begin{align}
 		f(A) & =  \inf_{B\in O(n,\rbb)} \trace{(A-B)^T(A-B)}\nonumber\\
 			 & =  \inf_{B\in O(n,\rbb)} \big[\trace{A^TA}-\trace{A^TB}-\trace{B^TA}+\trace{B^TB} \big]\nonumber\\
 			 & =  \trace{A^TA} + \inf_{B\in O(n,\rbb)}\big[- 2\,\trace{A^TB} \big] + n \nonumber \\ 
 			 & =  \trace{A^TA}+ n -2\sup_{B\in O(n,\rbb)} \trace{A^TB} \label{maxdist}
\end{align}
In order to maximize the function 
$$h_A: O(n,\rbb)\to \rbb, B\mapsto \trace{A^TB}$$
for any invertible matrix $A$, we may first assume that $A$ is a diagonal matrix with positive entries. Then, 
\begin{displaymath}
\left|h_A(B)\right|  = \left|\trace{A^TB}\right|
	 	 = \Big|\sum_{i=1}^n a_{ii} b_{ii} \Big| 
		 \le \sum_{i=1}^n \abs{a_{ii}b_{ii}} 
		 \le \sum_{i=1}^n a_{ii} 
		= \trace{A^T} 
		= h_A(I).
\end{displaymath}
Thus, one of the maximizer is $B=I$. For a general non-singular matrix $A,$ we will use the \textit{singular value decomposition} (SVD). Write $A=UDV^T$, where $U$ and $V$ are $n\times n$ orthogonal matrices and $D$ is a diagonal matrix with positive entries. For any $B\in O(n,\rbb)$, using the cyclic property of trace, we can see that
$$\trace{A^TB} = \trace{D(U^TBV)}.\label{eq: 4.2}$$
Since $U^TBV$ is an orthogonal matrix, maximizing over $B$ reduces to the earlier observation that $B$ will be a maximizer if $U^TBV = I$, which implies $B = UV^T$. \\
\hspace*{0.5cm}Since $A$ is invertible, by the polar decomposition, there exists an orthogonal matrix $Q$ and a symmetric positive definite matrix $S=\sqrt{A^TA}$ such that $A=QS$. Since $S$ is symmetric matrix we can diagonalize it, i.e., $S = P\tilde{D}P^T$, where $P\in O(n,\rbb)$ and $\tilde{D}$ is a diagonal matrix with the eigenvalues of $S$ as its diagonal entries. Thus, 
			\begin{displaymath}
				A = QS = Q P\tilde{D}P^T.
			\end{displaymath}
Set $U=QP$, $V=P$ to obtain the SVD of $A$. In particular, the minimizer is given by
$$B=Q=A\big(\sqrt{A^TA}\big)^{-1}.$$
Therefore, 
	\begin{equation*}
		f(A) = n+\trace{A^TA}  - 2\,\trace{\sqrt{A^TA}}
	\end{equation*}
for invertible matrices.\\
\hspace*{0.5cm}In order to compute $f$ for a non-invertible matrix $A$, we note that $GL(n,\rbb)$ is dense in $M(n,\rbb)$ and that $\sqrt{A^TA}$ is well-defined for $A\in M(n,\rbb)$. The continuity of the map $A\mapsto \sqrt{A^T A}$ on $M(n,\rbb)$ implies that the same formula \eqref{eq:d2On} for $f$ applies to $A$ as well. 
\end{proof}
\noindent In order to understand the differentiability of $f$, it suffices to analyze the function $A\mapsto \trace{\sqrt{A^TA}}$. 
\begin{lmm}
	The map $g : M(n,\rbb)\to \rbb, ~A\mapsto \trace{\sqrt{A^TA}}$ is differentiable if and only if $A$ is invertible. 
\end{lmm}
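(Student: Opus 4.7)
The plan is to establish the two directions separately, using the polar/SVD structure for the obstruction and the inverse function theorem for the smoothness.

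For the ``if'' direction, I would factor $g$ as the composition
\[
M(n,\R) \xrightarrow{\varphi} \mathrm{Sym}(n,\R) \xrightarrow{\sqrt{\,\cdot\,}} \mathrm{Sym}(n,\R) \xrightarrow{\trace{\cdot}} \R,
\]
where $\varphi(A)=A^TA$. The map $\varphi$ is polynomial, hence smooth everywhere. When $A$ is invertible, $\varphi(A)$ lies in the open cone $\mathcal{P}$ of positive definite symmetric matrices. On $\mathcal{P}$, the squaring map $S\mapsto S^2$ has derivative $X\mapsto SX+XS$; since $S$ has strictly positive spectrum, this Lyapunov operator is invertible, so by the inverse function theorem the positive square root is smooth on $\mathcal{P}$. (This is exactly what Lemma A.1 in the appendix will be doing.) Composing with the linear trace map gives smoothness of $g$ at every invertible $A$.

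For the ``only if'' direction, suppose $A$ is singular of rank $r<n$, and write the SVD $A=U\Sigma V^T$ with $\Sigma=\mathrm{diag}(\sigma_1,\ldots,\sigma_r,0,\ldots,0)$ and $\sigma_r>0$. Consider the smooth curve
\[
\gamma(t) \;=\; A + t\, U E_{r+1,\,r+1} V^T \;=\; U\bigl(\Sigma + tE_{r+1,\,r+1}\bigr)V^T.
\]
Because $\Sigma+tE_{r+1,r+1}$ is diagonal,
\[
\gamma(t)^T\gamma(t) \;=\; V\,\mathrm{diag}(\sigma_1^2,\ldots,\sigma_r^2,\,t^2,\,0,\ldots,0)\,V^T,
\]
and its unique positive semidefinite square root is
\[
\sqrt{\gamma(t)^T\gamma(t)} \;=\; V\,\mathrm{diag}(\sigma_1,\ldots,\sigma_r,\,|t|,\,0,\ldots,0)\,V^T.
\]
Taking trace gives $g(\gamma(t))=\sigma_1+\cdots+\sigma_r+|t|$, which is not differentiable at $t=0$. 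If $g$ were differentiable at $A$, then $g\circ\gamma$ would be differentiable at $0$ by the chain rule, a contradiction.

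The main obstacle is the smoothness assertion on the positive definite cone: one must verify that the positive square root really is $C^\infty$, which is the content of the inverse-function-theorem argument (and is deferred to appendix Lemma A.1 in the paper). The singular-direction construction is straightforward once one commits to the SVD, and the key observation is simply that a nonzero perturbation into the kernel direction of $\Sigma$ creates a $|t|$ singularity in the smallest singular value, which the trace of $\sqrt{A^TA}$ cannot smooth out.
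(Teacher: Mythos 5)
Your proposal is correct and follows essentially the same route as the paper's proof (Lemma \ref{lemma: A.2}): smoothness at invertible $A$ via the inverse function theorem applied to the squaring map on the positive definite cone (whose derivative $X\mapsto SX+XS$ the paper shows is injective by an eigenvector argument), and non-differentiability at singular $A$ via an SVD-adapted perturbation that produces a $|t|$ term in $\trace{\sqrt{A^TA}}$. The only cosmetic difference is that you perturb in a single kernel direction $UE_{r+1,r+1}V^T$ where the paper uses the full rank-$k$ block, and the explicit derivative formula you reference is the content of Lemma \ref{lemma: A.1}, while the differentiability itself is what Lemma \ref{lemma: A.2} establishes.
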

\noindent
The proof of this postponed to the appendix (see Lemma \ref{lemma: A.2}). \\
\hspace*{0.5cm}Let us define the function 
\begin{displaymath}
	\phi : M(n,\rbb)\to \rbb,~~ A\mapsto \trace{\sqrt{A^TA}}.
\end{displaymath}
{ We claim that 
\begin{equation}\label{derivativeOfSquareRoot}
   		D\phi_A(H) = \trace{\int_0^\infty e^{-t\sqrt{A^TA}}\paran{A^TH+H^TA}e^{-t\sqrt{A^TA}}~dt} 
\end{equation}}
The following lemma (see Lemma \ref{lemma: A.1} for a proof) along with chain rule will prove our claim.
\begin{lmm}
 Let $A$ be a positive definite matrix and $\psi:A\mapsto \sqrt{A}$. Then 
\begin{equation*}\label{eq: sqrtderivative}
D\psi_A(H) = \int_0^\infty e^{-t\sqrt{A}}He^{-t\sqrt{A}}~dt,
\end{equation*}
for any symmetric matrix $H.$
\end{lmm}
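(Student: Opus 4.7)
The plan is to characterise $D\psi_A$ implicitly from the defining identity $\psi(A)^2 = A$, and then solve the resulting matrix equation. Write $B \defeq \psi(A) = \sqrt{A}$; since $A$ is positive definite, $B$ is positive definite as well.

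First, I would establish that $\psi$ really is differentiable at $A$, rather than just assuming it. Consider the squaring map $\Phi:\mathrm{Sym}^+_n\to \mathrm{Sym}^+_n$, $B\mapsto B^2$, on the open cone of positive definite symmetric matrices. Its derivative at $B$ is the linear map $L_B:K\mapsto BK+KB$ on symmetric matrices. Because $B$ is positive definite with eigenvalues $\lambda_1,\dots,\lambda_n>0$, the operator $L_B$ has eigenvalues $\lambda_i+\lambda_j>0$ (in an eigenbasis of $B$, one checks $L_B$ acts on the $(i,j)$-entry by multiplication by $\lambda_i+\lambda_j$). Thus $L_B$ is invertible, and the inverse function theorem shows that $\Phi$ is a local diffeomorphism near $B$; its smooth inverse is precisely $\psi$, and $D\psi_A = L_B^{-1}$.

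Now I would differentiate the identity $\psi(A)^2=A$ directly, using the product rule, to get
\begin{equation*}
B\cdot D\psi_A(H) + D\psi_A(H)\cdot B \;=\; H,
\end{equation*}
which is a Sylvester equation in the unknown $K\defeq D\psi_A(H)$. By the previous paragraph this equation has a unique symmetric solution, so it suffices to verify that
\begin{equation*}
K \;\defeq\; \int_0^{\infty} e^{-tB}\,H\,e^{-tB}\,dt
\end{equation*}
solves it. Convergence of the integral is immediate: if $\lambda>0$ is the smallest eigenvalue of $B$, then $\|e^{-tB}\|\le e^{-t\lambda}$, giving an integrand bounded in norm by $\|H\|\,e^{-2t\lambda}$.

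Finally, to check $BK+KB=H$, I would observe that $B$ commutes with $e^{-tB}$, so
\begin{equation*}
B\,e^{-tB}\,H\,e^{-tB} + e^{-tB}\,H\,e^{-tB}\,B \;=\; -\frac{d}{dt}\bigl(e^{-tB}\,H\,e^{-tB}\bigr).
\end{equation*}
Integrating from $0$ to $\infty$ and using $e^{-tB}\to 0$ as $t\to\infty$ (again by positive definiteness of $B$) together with $e^{0}=I$, the right-hand side telescopes to $H$. This identifies $K$ with $D\psi_A(H)$ and completes the proof.

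The main delicate point is the legitimacy of the implicit differentiation, which is why I would set up the inverse function theorem argument first; once differentiability is in hand, the rest is a one-line Sylvester-equation computation, and the claimed integral formula is essentially forced.
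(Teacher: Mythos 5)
Your proof is correct and follows essentially the same route as the paper's: differentiate $\psi(A)^2=A$ to obtain the Sylvester equation $B K+K B=H$, check convergence of the integral, and verify it solves the equation by integrating $-\frac{d}{dt}\bigl(e^{-tB}He^{-tB}\bigr)$. You are in fact somewhat more careful than the paper, which defers the differentiability of $\psi$ (via the inverse function theorem applied to the squaring map) to the proof of Lemma \ref{lemma: A.2} and appeals loosely to ``uniqueness of the derivative,'' whereas you make explicit that the operator $K\mapsto BK+KB$ has eigenvalues $\lambda_i+\lambda_j>0$ and is therefore invertible, which is precisely what forces the integral to coincide with $D\psi_A(H)$.
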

\noindent We may drastically simplify, using basic analysis and linear algebra, the derivative of $\phi$ given by \eqref{derivativeOfSquareRoot} to obtain
\begin{equation*}
D\phi_A(H) = \innerprod{A\paran{\sqrt{A^TA}}^{-1}}{H}.
\end{equation*}
\hspace*{0.5cm}For  any $A\in GL(n,\rbb)$
\begin{equation*}\label{gradfForGL}
Df_A  = 2A-2A\paran{\sqrt{A^TA}}^{-1}=-2A \paran{\sqrt{A^TA}^{-1}-I}.
\end{equation*}
Hence, the negative gradient of the function $f$, restricted to $GL(n,\rbb)$ is given by 
\begin{displaymath}
	-\grad f\big|_A = 2A\paran{\sqrt{A^TA}^{-1}-I}.
\end{displaymath}
The critical points are orthogonal matrices. If $\gamma(t)$ is an integral curve of $-\grad f$ initialized at $A$, then $\gamma(0)=A$ and 
\begin{equation}
\dfrac{d\gamma}{dt} =-2\gamma(t)+2\gamma(t)\paran{\sqrt{\gamma(t)^T\gamma(t)}}^{-1}=-2\gamma(t)+2\paran{\gamma(t)^T}^{-1}\sqrt{\gamma(t)^T\gamma(t)} \label{eq: 4.5}.
\end{equation}
Take the test solution of \eqref{eq: 4.5} given by
\begin{equation}\label{eq: 4.6}
\gamma(t)  = Ae^{-2t} + (1-e^{-2t})\paran{A^T}^{-1}\sqrt{A^TA} = Ae^{-2t} + (1-e^{-2t})A\paran{\sqrt{A^TA}}^{-1}.
\end{equation}
\noindent In order to show that $\gamma(t)$ satisfies \eqref{eq: 4.5}, we may verify the following simplifications:
\begin{align*}
\gamma(t)^T\gamma(t) & = \paran{\sqrt{A^TA}~e^{-2t}+\paran{1-e^{-2t}}I}^2 \\
\paran{\sqrt{\gamma(t)^T\gamma(t)}}^T & = \paran{\sqrt{A^TA}A^{-1}\gamma(t)}^T = \gamma(t)^T\aTransInverse \sqrt{A^TA}
\end{align*}
This implies that 
\begin{equation*}\label{sqrtGammaTransGamma}
(\gamma(t)^T)^{-1}\sqrt{\gamma(t)^T\gamma(t)} = (A^T)^{-1}\sqrt{A^TA}.
\end{equation*}
\noindent The right hand side of \eqref{eq: 4.5}, with the test solution, can be simplified to 
\begin{displaymath}
-2Ae^{-2t} + 2e^{-2t}\aTransInverse \sqrt{A^TA}
\end{displaymath}
which is the derivative of $\gamma$. Thus, $\gamma(t)$, as defined in \eqref{eq: 4.6}, is the required flow line which deforms $GL(n,\rbb)$ to $O(n,\rbb).$ In particular, $GL^+(n,\rbb)$ deforms to $SO(n,\rbb)$ and other component of $GL(n,\rbb)$ deforms to $O(n,\rbb)\setminus SO(n,\rbb)$. We note, however, that this deformation takes infinite time to perform the retraction.
\begin{rem}
A modified curve
\begin{equation}\label{GLdefOver1}
\eta(t)=A(1-t)+tA\paran{\sqrt{A^TA}}^{-1}
\end{equation}
with the same image as $\gamma$, defines an actual deformation retraction of $GL(n,\rbb)$ to $O(n,\rbb)$. Apart from its origin via the distance function, this is a geometric deformation in the following sense. Given $A\in GL(n,\rbb)$, consider its columns as an ordered basis. This deformation deforms the ordered basis according to the length of the basis vectors and mutual angles between pairs of basis vectors in a geometrically uniform manner. This is in sharp contrast with Gram-Schmidt orthogonalization, also a deformation of $GL(n,\rbb)$ to $O(n,\rbb)$, which is asymmetric as it never changes the direction of the first column, the modified second column only depends on the first two columns and so on.
\end{rem}
\noindent\hspace*{0.5cm}We now show that $f$ is Morse-Bott. The tangent space $T_I O(n,\rbb)$ consists of skew-symmetric matrices while the normal vectors at $I_n$ are the symmetric matrices. As left translation by an orthogonal matrix is an isometry of $M(n,\rbb)$, normal vectors at $A\in O(n,\rbb)$ are of the form $AW$ for symmetric matrices $W$. Since 
\begin{displaymath}
	Df_A(H) = 2\innerprod{A}{H}-2\innerprod{A\big(\sqrt{A^TA}\big)^{-1}}{H}
\end{displaymath}
the relevant Hessian is 
\begin{equation*}
	\hess(f)_A(H,H') = \lim_{t\to 0}\dfrac{Df_{A+tH'}(H)-Df_A(H)}{t} 
\end{equation*}
with $H=AW, H'=AW'$ and symmetric matrices $W,W'$. A standard computation leads to
\begin{equation*}
\hess(f)_A(H,H') =2\,\trace{H^TH'}=2\left\langle H, H'\right\rangle.
\end{equation*}
Therefore, the Hessian matrix restricted to $(T_A O(n,\rbb))^\perp$ is $2I_{\frac{n(n+1)}{2}}$. This is a recurring feature of distance squared functions associated to embedded submanifolds (see Proposition \ref{dsq-Fermi}).\\
\noindent \hf There is a relationship between the local homology of cut loci and the reduced $\check{\textup{C}}$ech cohomology of the \textit{link} of a point in the cut locus. This is due to Hebda \cite{Heb83}, Theorem 1.4 and the remark following it. 
\begin{defn}
Let $N$ be an embedded submanifold of a complete smooth Riemannian manifold $M$. For each $q\in \cutn$, consider the set $\Lambda(q,N)$ of unit tangent vectors at $q$ so that the associated geodesics realize the distance between $q$ and $N$. This set is called the \textit{link} of $q$ with respect to $N$. \\
\hf The set of points in $N$ obtained by the end points of the geodesics associted to $\Lambda(q,N)$ will be called the \textit{equidistant set}, denoted by $\mathrm{Eq}(q,N)$, of $q$ with respect to $N$. 
\end{defn}
\noindent Since the equidistant set $\mathrm{Eq}(q,N)$, consisting of points which realize the distance $d(q,N)$, is obtained by exponentiating the points in $\Lambda(q,N)$, there is a natural surjection map from $\Lambda(q,N)$ to $\mathrm{Eq}(q,N)$. 
\begin{thm}[Hebda 1983]
Let $N$ be a properly embedded submanifold of a complete Riemannian manifold $M$ of dimension $n$. If $q\in \cutn$ and $v$ is an element of $\Lambda:=\Lambda(q,N)$, then there is an isomorphism
\begin{equation}\label{duality}
\check{H}^i(\Lambda,v)\cong H_{n-1-i}(\cutn,\cutn -q)
\end{equation}
\end{thm}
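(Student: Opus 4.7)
The plan is to combine a local analysis at $q$ with Alexander duality in the unit tangent sphere $S^{n-1}\subset T_qM$, with \v{C}ech cohomology handling the possibly wild topology of the cut locus.

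First, I would localize: choose a small convex geodesic ball $B=B_\varepsilon(q)$ disjoint from $N$ and identify it via $\exp_q^{-1}$ with an open ball in $T_qM$, so that its sphere at infinity is $S^{n-1}$. By excision,
\begin{equation*}
H_{n-1-i}(\cutn,\cutn - q)\;\cong\;H_{n-1-i}\!\left(\cutn\cap B,\;\cutn\cap B - q\right).
\end{equation*}
The geometric heart of the argument is then to establish a radial picture of the complement $B - \cutn$. Using the gradient flow of $d_N^2$ provided by Theorem~A, I would show that $B - \cutn$ deformation retracts onto its ``sphere at infinity'' $S^{n-1}-\Lambda\subset T_qM$. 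The underlying dichotomy is that a unit direction $v\in S^{n-1}$ lies in $\Lambda$ precisely when the radial ray $t\mapsto\exp_q(tv)$ fails to immediately exit $\cutn$: for $v\notin\Lambda$ there is only one minimizing $N$-geodesic from $q$ in a neighborhood of the ray, so the ray stays outside $\cutn$ for small $t$; for $v\in\Lambda$, on the other hand, $q$ is the terminal point of a minimizing $N$-geodesic with initial velocity $-v$, so extending in direction $v$ immediately enters the cut locus.

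Granted this deformation retract, contractibility of $B$ and the long exact sequence of $(B,B-\cutn)$ yield
\begin{equation*}
H_j(B,B-\cutn)\;\cong\;\tilde H_{j-1}(S^{n-1}-\Lambda),
\end{equation*}
and a local Lefschetz-style duality (for closed subsets of a manifold, formulated via \v{C}ech theory) converts $H_j(B,B-\cutn)$ near $q$ to the local homology $H_{n-j}(\cutn\cap B,\cutn\cap B - q)$. Putting these together gives
\begin{equation*}
H_{n-1-i}(\cutn,\cutn - q)\;\cong\;\tilde H_{n-2-i}(S^{n-1}-\Lambda).
\end{equation*}
Alexander duality in $S^{n-1}$ applied to the compact subset $\Lambda$ supplies the isomorphism
\begin{equation*}
\tilde H_{n-2-i}(S^{n-1}-\Lambda)\;\cong\;\tilde{\check H}{}^{\,i}(\Lambda),
\end{equation*}
and passing to the pointed form with basepoint $v\in\Lambda$ yields $\check H^i(\Lambda,v)\cong H_{n-1-i}(\cutn,\cutn-q)$.

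The principal obstacle is the radial deformation step. The cut locus is not in general a cone over $\Lambda$, may fail to be locally contractible or even a CW complex, and the radial retraction requires a careful continuity analysis of how minimizing $N$-geodesics vary as their endpoint moves near $q$. It is precisely this wildness that forces the use of \v{C}ech cohomology, both for $\Lambda$ and for the local Lefschetz duality for the closed subset $\cutn\cap B$ of $B$. Making these steps fully rigorous is the substantive geometric work carried out in Hebda's paper.
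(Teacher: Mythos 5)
You should first be aware that the paper does not prove this statement: it is quoted from Hebda's 1983 paper (Theorem 1.4 there and the remark following it), so there is no in-paper proof to compare with, and I can only assess your sketch on its own terms. Its overall architecture --- localize to a small ball $B$ about $q$, trade the local homology of $\cutn$ at $q$ for the topology of $B-\cutn$ by a duality inside $B$, then pass to $\check{H}^{*}(\Lambda)$ by Alexander duality in the unit sphere of $T_qM$, with \v{C}ech theory absorbing the possible wildness --- is the right general shape for a duality of this kind.

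The gap is the central geometric step: both the ``dichotomy'' and the asserted deformation retraction of $B-\cutn$ onto $S^{n-1}-\Lambda$ are false. Take Example \ref{join} with $n=3$, $k=1$: $N=\mathbb{S}^1_i\subset\mathbb{S}^3$, $\cutn=\mathbb{S}^1_l$, and $q\in\mathbb{S}^1_l$. Here $\Lambda$ is the great circle of unit vectors at $q$ orthogonal to $T_q\mathbb{S}^1_l$, so $S^{2}-\Lambda$ is two open disks, homotopy equivalent to $S^0$, whereas $B-\cutn$ is a $3$-ball minus a diameter, homotopy equivalent to $S^1$; no retraction of the one onto the other exists, and your displayed formula fails, since $H_2(B,B-\cutn)\cong\mathbb{Z}$ while $\widetilde{H}_1(S^{2}-\Lambda)=0$. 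The dichotomy fails for the same reason: the directions at $q$ tangent to $\cutn$ are not in $\Lambda$, yet the rays they span immediately re-enter $\cutn$; this happens at every cut point where the cut locus has positive-dimensional tangent directions (e.g.\ an interior point of the cut arc of an ellipse). Two further problems: the gradient flow of $d_N^2$ from Theorem A moves points of $B-\cutn$ toward $N$ and out of $B$, so it cannot furnish a radial retraction within $B$; and even granting your two displayed isomorphisms, the indices combine to give $H_{n-1-i}(\cutn,\cutn-q)\cong\widetilde{H}_{i}(S^{n-1}-\Lambda)$ rather than $\widetilde{H}_{n-2-i}(S^{n-1}-\Lambda)$, which in the example above outputs $0$ instead of the correct $\mathbb{Z}$. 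The correct local model for $B-\cutn$ has to be built from the $N$-side --- via the normal exponential map and the behaviour of the function $\mathpzc{s}$ near the vectors of $\Lambda$ --- not radially from $q$; supplying that model is the substantive content of Hebda's proof, and it is precisely the part your sketch does not deliver.
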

We are interested in computing $\Lambda(A,O(n,\R))$ for singular matrices $A$. Note that geodesics in $M(n,\R)$, initialized at $A$, are straight lines and any two such geodesics can never meet other then at $A$. Therefore, there is a natural identification between the link and the equidistant set of $A$. 
\begin{lmm}\label{link-sing}
If $A\in M(n,\R)$ is singular of rank $k$, then $\mathrm{Eq}(A,O(n,\R))$ is homeomorphic to $O(n-k,\R)$.
\end{lmm}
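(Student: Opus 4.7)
The plan is to leverage the singular value decomposition (SVD) characterization of minimizers already carried out in the proof of \eqref{eq:d2On}, and observe that the uniqueness argument there fails precisely when $A$ is singular, producing an $O(n-k,\R)$-worth of minimizers. Since geodesics in $M(n,\R)$ are straight lines, the equidistant set and the set of minimizers of $\|A-B\|$ over $B\in O(n,\R)$ coincide, so it suffices to parametrize the latter.

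First I would write $A=U\Sigma V^T$ with $U,V\in O(n,\R)$ and $\Sigma=\mathrm{diag}(\sigma_1,\ldots,\sigma_k,0,\ldots,0)$, where $\sigma_1,\ldots,\sigma_k>0$ are the nonzero singular values of $A$. As in the computation leading to \eqref{maxdist}, realizing the distance $d(A,O(n,\R))$ amounts to maximizing $\trace{A^TB}$ over $B\in O(n,\R)$. Setting $C \defeq U^TBV\in O(n,\R)$ and using cyclicity of trace,
\begin{displaymath}
\trace{A^TB}=\trace{\Sigma C}=\sum_{i=1}^k \sigma_i C_{ii}.
\end{displaymath}
Since each diagonal entry of an orthogonal matrix satisfies $|C_{ii}|\le 1$, the supremum $\sum_{i=1}^k \sigma_i$ is attained precisely when $C_{ii}=1$ for every $1\le i\le k$.

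The key observation is then purely linear-algebraic: if $C_{ii}=1$ and the $i$-th row of $C$ has unit Euclidean norm, that row must equal $e_i^T$, and the analogous reasoning forces the $i$-th column to be $e_i$. Consequently the extremizers are exactly the matrices of block form
\begin{displaymath}
C=\begin{pmatrix} I_k & 0 \\ 0 & C'\end{pmatrix},\qquad C'\in O(n-k,\R),
\end{displaymath}
with the lower-right block unconstrained apart from orthogonality. Pulling back through $B=UCV^T$, the map $\Phi:O(n-k,\R)\to \mathrm{Eq}(A,O(n,\R))$ sending $C'$ to $U\begin{pmatrix}I_k&0\\0&C'\end{pmatrix}V^T$ is a continuous bijection whose inverse is given by extracting the lower-right $(n-k)\times(n-k)$ block of $U^TBV$; hence $\Phi$ is the desired homeomorphism.

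The argument is essentially a careful rereading of the SVD optimization already used in \S \ref{Sec: Example}; the only mildly subtle point is the step forcing the off-diagonal entries in the first $k$ rows and columns of $C$ to vanish, which is a consequence of orthogonality rather than any extremum condition. I do not expect a serious obstacle; the one thing worth spelling out carefully is that the identification $\Phi$ is canonical up to the choice of SVD, different SVDs only conjugating the $O(n-k,\R)$ factor by an element of $O(n-k,\R)$ and thus not affecting the homeomorphism type.
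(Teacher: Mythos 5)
Your proof is correct and follows essentially the same route as the paper's: reduce to maximizing $\trace{A^TB}$ via the SVD, observe that the maximum forces the first $k$ rows (and columns) of $U^TBV$ to be $e_1,\ldots,e_k$, and read off the residual $O(n-k,\R)$ freedom in the lower-right block. Your write-up is in fact slightly more explicit than the paper's on the extremality step ($|C_{ii}|\le 1$ with equality forcing the row to be $e_i^T$) and on exhibiting the homeomorphism $\Phi$ with its continuous inverse, but the substance is identical.
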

\begin{proof}
Using the singular vaue decomposition, we write $A=UDV^T$, where $U,V\in O(n,\R)$ and $D$ is a diagonal matrix with entries the eigenvalues of $\sqrt{A^T A}$. If we specify that the diagonal entries of $D$ are arranged in decreasing order, then $D$ is unique. Moreover, as $A$ has rank $k<n$, the first $k$ diagonal entries of $D$ are positive while the last $n-k$ diagonal entries are zero. In order to find the matrices in $O(n,\R)$ which realize the distance $d(A,O(n,\R))$, by \eqref{maxdist}, it suffices to find $B\in O(n,\R)$ such that 
\bgd
\sup_{B\in O(n,\R)} \trace{A^T B}=\sup_{B\in O(n,\R)} \trace{VDU^T B}=\sup_{B\in O(n,\R)} \trace{DU^T BV}
\edd
is maximized. However, $U^TBV\in O(n,\R)$ has orthonormal rows and the specific form of $D$ implies that the maximum happens if and only if $U^T BV$ has $e_1,\ldots,e_k$ as the the first $k$ rows, in order. Therefore, $U^T BV$ is a block orthogonal matrix, with blocks of $I_k$ and $C\in O(n-k,\R)$, i.e., $B\in U(I_k \times O(n-k,\R))V^T$.
\end{proof}
\begin{cor}\label{locsinghom}
Let $\mathrm{Sing}$ denote the space of singular matrices in $M(n,\R)$. If $A\in \mathrm{Sing}$ is of rank $k<n$, then then there is an isomorphism
\begin{equation}\label{locsing}
\widetilde{H}^i(O(n-k,\R))\cong H_{n^2-1-i}(\mathrm{Sing},\mathrm{Sing} -A).
\end{equation}
\end{cor}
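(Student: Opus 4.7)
The plan is to deduce the corollary by feeding Lemma \ref{link-sing} into Hebda's duality theorem, after checking that the hypotheses are met and that the dimensional/cohomological bookkeeping works out.

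First I would set the stage: the ambient manifold is $M = M(n,\R)$ with $\dim M = n^2$, the submanifold is $N = O(n,\R)$ (properly embedded and closed), and, as established earlier in \S\ref{Sec: Example}, $\mathrm{Cu}(O(n,\R)) = \mathrm{Sing}$. Thus any $A \in \mathrm{Sing}$ is a cut point of $N$, and Hebda's theorem applies to yield, for any $v \in \Lambda := \Lambda(A, O(n,\R))$,
\begin{equation*}
\check{H}^i(\Lambda, v) \;\cong\; H_{n^2 - 1 - i}(\mathrm{Sing},\, \mathrm{Sing} - A).
\end{equation*}
This already produces the correct grading on the right-hand side of \eqref{locsing}.

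Next I would identify $\Lambda$ explicitly. Since the ambient metric is the flat Euclidean one, geodesics in $M(n,\R)$ are straight lines, and two distinct unit-speed geodesics emanating from $A$ can never reintersect. Consequently the exponential (endpoint) map $\Lambda(A, O(n,\R)) \to \mathrm{Eq}(A, O(n,\R))$ is a homeomorphism. Lemma \ref{link-sing} then gives $\Lambda \cong \mathrm{Eq}(A, O(n,\R)) \cong O(n-k,\R)$. Because $O(n-k,\R)$ is a smooth manifold (in particular, an ANR), Čech cohomology of the pair $(\Lambda, v)$ coincides with singular cohomology of the pair, and since $\{v\}$ is a point,
\begin{equation*}
\check{H}^i(\Lambda, v) \;\cong\; H^i(O(n-k,\R),\, \mathrm{pt}) \;\cong\; \widetilde{H}^i(O(n-k,\R)).
\end{equation*}
Combining the two displayed isomorphisms yields \eqref{locsing}.

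The only genuine subtlety is the identification of the link with $O(n-k,\R)$: Lemma \ref{link-sing} literally computes the equidistant set, so one needs the observation that for a flat Euclidean ambient space the natural map $\Lambda \to \mathrm{Eq}$ is a bijection, and, being a restriction of the (continuous, open) exponential map on the unit normal sphere bundle, is a homeomorphism. Once this bridge between Lemma \ref{link-sing} and Hebda's formula is in place, the rest is formal, and no additional computation is required.
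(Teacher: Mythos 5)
Your proposal is correct and follows essentially the same route as the paper: feed Lemma \ref{link-sing} into Hebda's duality \eqref{duality}, identify the link with the equidistant set via the flat geometry of $M(n,\R)$ (a point the paper makes in the paragraph preceding Lemma \ref{link-sing}), and replace \v{C}ech by singular cohomology since $O(n-k,\R)$ is a manifold. The extra care you take in checking that $\Lambda\to\mathrm{Eq}$ is a homeomorphism rather than merely a bijection is a reasonable refinement, not a divergence.
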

\begin{proof}
It follows from Lemma \ref{link-sing} that $\Lambda(A,O(n,\R))\cong O(n-k,\R)$ if $A$ has rank $k$. Since $O(n-k,\R)$ is a manifold, $\check{\textup{C}}$ech and singular cohomology groups are isomorphic. The space $\mathrm{Sing}$ is a star-convex set, whence all homotopy and homology groups are that of a point. Applying \eqref{duality} in our case, we obtain an isomorphism
\begin{equation*}
\widetilde{H}^i(O(n-k,\R))\cong H_{n^2-1-i}(\mathrm{Sing},\mathrm{Sing} -A)
\end{equation*}
between reduced cohomology and local homology groups. In particular, the local homology of the cut locus at $A$ detects the rank of $A$. 
\end{proof}
\noindent Similar computations hold for $U(n,\C)$ and singular $n\times n$ complex matrices.


\section{Main Results}\label{Sec: results}

\hf We recall some results about exponential maps and Fermi coordinates in \S \ref{Sec: basic results}. A result of Wolter \cite{Wol79} may be generalized to prove (Lemma \ref{Lmm: singdsq}) that the distance squared function from a submanifold is not differentiable on the seperating set. This result may be well known to experts but the proof, following Wolter, is elementary. Buchner's result \cite{Buc77} may be generalized to prove (Theorem \ref{Buchner}) that the cut locus is a simplicial complex for real analytic pairs. In \S \ref{Sec: Thom} we recall the notion of Thom space and apply it to the normal bundle of an embedded submanifold in a closed, connected Riemannian manifold. Our first main result, Theorem \ref{Thomsp}, states that the quotient of the ambient manifold by the cut locus of the submanifold results in the Thom space of the normal bundle. As a consequence we obtain Theorem \ref{homsph} which says that a homology $k$-sphere inside a manifold homeomorphic to $S^d$ has cut locus weakly homotopy equivalent to $S^{d-k-1}$, provided $d-k\geq 3, k>0$ and $H_{d-1}(\cutn)$ is torsion-free. Theorem \ref{cutlocus-surface} is another consequence about analytic surfaces. In \S \ref{Sec: Morse-Bott} we prove (cf. Theorem \ref{thm: Theorem 2}) that the cut locus of a submanifold is closed, essentially following Wolter's arguments \cite{Wol79}. This leads us to the other main result, Theorem \ref{thm: Morse-Bott}, which proves that the complement of the cut locus $\cutn$ deforms to $N$.

\subsection{Basic results}\label{Sec: basic results}

\hf For understanding the geometry in the neighborhood of a submanifold, it is convenient to use the Fermi coordinates, a generalization of normal coordinates. We shall briefly introduce Fermi coordinates and state some of the relevant properties. Let $N$ be an embedded submanifold of a Riemannian manifold $M$. Let $\nu $ be the normal bundle of $N\subseteq M$, i.e., 
\begin{displaymath}
\nu \defeq \curlybracket{(p,v) : p\in N, v\in (T_p N)^\perp}.
\end{displaymath}
In fact, $\nu$ is a subbundle of the restriction of $TM$ to $N$. We define the \emph{exponential map of the normal bundle } as follows:
\begin{equation}\label{norexp}
\exp_\nu:\nu\to M,\,\,\exp_\nu(p,v) \defeq \exp_p(v),~~ (p,v) \in \nu.
\end{equation}
We may write $\textup{exp}_\nu(v)$ in short and call this the \textit{normal exponential} map.\\
\hf Now we will list some lemmas; for proofs see \cite[\S 2.1, \S 2.3]{Gr04}. 
\begin{lmm}
Let $N$ be a topologically embedded submanifold of a Riemannian manifold $M$. Then the normal exponential map $\exp_\nu:\nu\to M$, maps a neighborhood of $N$ in $\nu$ diffeomorphically onto a neighborhood of $N$ in $M$.
\end{lmm}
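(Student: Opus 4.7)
The plan is to establish this via the inverse function theorem at the zero section, followed by a local-to-global injectivity argument, since this is the classical tubular neighborhood statement. First I would verify that $\exp_\nu$ is smooth: the total space $\nu$ is a smooth subbundle of $TM|_N$, and $\exp_\nu$ is the restriction to $\nu$ of the ordinary exponential on $TM$, which is smooth wherever geodesics are defined. (If $M$ is not complete one restricts to the open subset of $\nu$ on which the geodesic flow exists for unit time.) Identify the zero section of $\nu$ with $N$ itself, and at a point $(p,0)$ use the canonical splitting
\begin{equation*}
T_{(p,0)}\nu \;\cong\; T_p N \oplus (T_p N)^{\perp} \;=\; T_p M.
\end{equation*}

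Next I would compute $d(\exp_\nu)_{(p,0)}$ under this identification and check that it is the identity. On the horizontal summand $T_p N$ this holds because $\exp_\nu$ restricted to the zero section is the inclusion $N \hookrightarrow M$. On the vertical summand $(T_p N)^\perp$ it holds because along a fiber $t\mapsto \exp_\nu(p,tv) = \exp_p(tv)$ has velocity $v$ at $t=0$. Hence $d(\exp_\nu)_{(p,0)}$ is a linear isomorphism, and the inverse function theorem supplies an open neighborhood $W_p \subset \nu$ of $(p,0)$ on which $\exp_\nu$ is a diffeomorphism onto an open subset of $M$. Taking the union $W = \bigcup_{p\in N} W_p$, one obtains an open neighborhood of $N$ in $\nu$ on which $\exp_\nu$ is a local diffeomorphism.

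The main obstacle — and the only subtle part — is passing from a local diffeomorphism on $W$ to a genuine diffeomorphism on some possibly smaller neighborhood of the zero section, i.e.\ promoting local injectivity to global injectivity. I would argue by contradiction: if no such neighborhood exists, pick a continuous positive function $\varepsilon \colon N \to (0,\infty)$ with $\{(p,v) : |v|<\varepsilon(p)\} \subset W$, and suppose that for every choice of smaller $\varepsilon$ there exist distinct pairs $(p_n,v_n)\neq (q_n,w_n)$ with $|v_n|,|w_n|\to 0$ and $\exp_\nu(p_n,v_n)=\exp_\nu(q_n,w_n)$. Since $v_n,w_n\to 0$ and the two images agree, both $p_n$ and $q_n$ subconverge in $M$ (after restricting to a relatively compact region, which exists by paracompactness of $N$); their limit is a common point $p_\infty\in N$. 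But then for large $n$ both pairs lie in the single local-diffeomorphism neighborhood $W_{p_\infty}$, contradicting injectivity of $\exp_\nu$ on $W_{p_\infty}$.

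Finally I would package the set of admissible $\varepsilon$ into an honest open neighborhood: using a locally finite cover of $N$ by the $W_p$'s together with a partition of unity on $N$, construct a continuous $\varepsilon\colon N\to (0,\infty)$ small enough that the open disk-subbundle $U=\{(p,v)\in\nu : |v|<\varepsilon(p)\}$ is contained in $W$ and that the injectivity argument above applies uniformly on $U$. Then $\exp_\nu|_U$ is a smooth bijection onto its image, has everywhere invertible differential by construction, and so is a diffeomorphism onto the open neighborhood $\exp_\nu(U)$ of $N$ in $M$, which is exactly the claim. The only genuinely non-trivial step is the paracompactness/continuous-$\varepsilon$ argument; everything else is an immediate application of the inverse function theorem together with the natural splitting of $T_{(p,0)}\nu$.
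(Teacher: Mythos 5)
The paper does not prove this lemma at all --- it is quoted as a standard fact with a pointer to Gray's \emph{Tubes} \cite{Gr04}, so there is no ``paper proof'' to compare against. Your argument is the standard proof of the tubular neighborhood theorem and is correct: the computation that $d(\exp_\nu)_{(p,0)}$ is the identity under the splitting $T_{(p,0)}\nu\cong T_pN\oplus(T_pN)^{\perp}=T_pM$ is right, and you have correctly isolated the only delicate point, namely upgrading the local diffeomorphism to injectivity on a full neighborhood of the zero section when $N$ is non-compact. For that step the clean way to close the sequential argument is to observe that $\exp_\nu(p,v)=\exp_\nu(q,w)$ with $|v|,|w|<\varepsilon$ forces $d(p,q)<2\varepsilon$ in $M$, so the two base points of any offending pair are eventually trapped in a single chart $W_{p_\infty}$; with that remark your locally finite cover plus continuous $\varepsilon$ construction goes through as stated.
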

\noindent Let $\mathcal{O}_N$ denote the largest neighborhood of the zero section of $\nu$ for which $\exp_\nu$ is a diffeomorphism. We shall later be able to describe this neighbourhood in terms of a function $\mathpzc{s}$ \eqref{snu}. To define a system of Fermi coordinates, we need an arbitrary system of coordinates $(y_1,\cdots,y_k)$ defined in a neighborhood  $\mathcal{U}\subset N$ of $p\in P$ together with orthogonal sections $E_{k+1},\cdots,E_n$ of the restriction of $\nu$ to $~\mathcal{U}$.
\begin{defn}[Fermi Coordinates]
The Fermi coordinates $(x_1,\cdots,x_n)$ of $N\subset M$ centered at $p$ (relative to a given coordinate system $(y_1,\cdots,y_k)$ on $N$ and orthogonal sections $E_{k+1},\cdots, E_n$ of $\nu$) are defined by 
\begin{align*}
    x_l\bigg(\exp_\nu\Big(\sum_{j={k+1}}^n t_jE_j(p')\Big)\bigg) = y_l(p'),~~~ l = 1,\cdots,k\\
    x_i\bigg(\exp_\nu \Big(\sum_{j=k+1}^n t_jE_j(p')\Big)\bigg) = t_i,~~~ i = k+1,\cdots,n
\end{align*}
for $p'\in \mathcal{U}$ provided the numbers $t_{k+1},\cdots,t_n$ are small enough so that $t_{k+1}E_{k+1}(p')+\cdots+t_n E_n(p')\in \mathcal{O}_N$.
\end{defn}
\noindent Since $\exp_\nu$ is a diffeomorphism on $\mathcal{O}_N$, $(x_1,\cdots,x_k,x_{k+1},\cdots,x_n)$ defines a coordinate system near $p$. In fact, the restrictions to $N$ of coordinate vector fields $\partial/\partial x_{k+1},\ldots, \partial/ \partial x_n$ are orthonormal. 
\begin{lmm} \label{Lemma: geodesic in fermi}
Let $\gamma$ be  a unit speed geodesic normal to $N$ with $\gamma(0) = p\in N$. If $u = \gamma'(0)$, then there is a system of Fermi coordinates $(x_1,\cdots,x_n)$ such that for small enough $t$, i.e., for $(p,tu)\in \mathcal{O}_N$, we have
\begin{displaymath}
\left.\delbydel{}{x_{k+1}}\right|_{\gamma(t)} = \gamma'(t),\,\,\,
\left.\delbydel{}{x_l}\right|_p\in T_pN,\,\,\, \left.\delbydel{}{x_i}\right|_p \in (T_p N)^\perp
\end{displaymath}
for $1\le l\le q$ and $k+1\le i\le n.$ Furthermore, for $1\le j \le n$
\begin{displaymath}
(x_j \comp \gamma)(t) = t\delta_{j (k+1)}.
\end{displaymath}
\end{lmm}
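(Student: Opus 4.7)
The plan is to construct explicit Fermi coordinates adapted to the given geodesic by choosing the coordinate data (the chart on $N$ and the orthonormal frame for $\nu$) carefully at the point $p$, and then read off all the assertions from the definition of the Fermi coordinates together with elementary properties of the normal exponential map.

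First I would fix any local coordinate system $(y_1,\ldots,y_k)$ on a neighborhood $\mathcal{U}\subset N$ of $p$. Since $u=\gamma'(0)\in (T_pN)^\perp$ is a unit vector, I can extend it to an orthonormal basis of $(T_pN)^\perp$, and then by parallel transport (or any smooth extension followed by Gram--Schmidt) produce smooth orthonormal sections $E_{k+1},\ldots,E_n$ of $\nu$ over $\mathcal{U}$ with the crucial normalization
\begin{equation*}
E_{k+1}(p)=u,\qquad E_{k+2}(p),\ldots,E_n(p)\text{ orthonormal completions.}
\end{equation*}
Let $(x_1,\ldots,x_n)$ denote the Fermi coordinates associated to the data $(y_1,\ldots,y_k;E_{k+1},\ldots,E_n)$; this is a genuine coordinate system near $p$ because $\exp_\nu$ is a diffeomorphism on $\mathcal{O}_N$.

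Next I would identify the geodesic in these coordinates. By uniqueness of geodesics with prescribed initial data and the defining property \eqref{norexp} of the normal exponential map, the geodesic $\gamma$ coincides with $t\mapsto \exp_\nu\bigl(t\,E_{k+1}(p)\bigr)$ on a neighborhood of $0$. The definition of the Fermi coordinates then gives immediately
\begin{equation*}
x_l(\gamma(t))=y_l(p)\ \ (1\le l\le k),\qquad x_i(\gamma(t))=t\,\delta_{i,k+1}\ \ (k+1\le i\le n),
\end{equation*}
which is precisely the formula $(x_j\circ\gamma)(t)=t\delta_{j(k+1)}$. Differentiating this identity in $t$ shows that only the $x_{k+1}$-component of $\gamma'(t)$ is nonzero and equals $1$, so $\gamma'(t)=\partial/\partial x_{k+1}|_{\gamma(t)}$.

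Finally I would read off the tangency statements at $p$. For $1\le l\le k$, the $l$-th coordinate curve through $p$ is the image under $\exp_\nu$ of the zero section traced along the $y_l$-curve in $N$; since $\exp_\nu$ restricted to the zero section is the identity on $N$, this curve lies entirely in $N$, so $\partial/\partial x_l|_p\in T_pN$. For $k+1\le i\le n$, the $i$-th coordinate curve through $p$ is $t\mapsto \exp_\nu(tE_i(p))$, whose velocity at $t=0$ is $E_i(p)\in (T_pN)^\perp$; in particular $\partial/\partial x_{k+1}|_p=u=\gamma'(0)$, which is also consistent with the previous paragraph at $t=0$. No step here is a serious obstacle—everything reduces to the defining property of $\exp_\nu$ and of Fermi coordinates once the correct frame $E_{k+1},\ldots,E_n$ is selected; the only mild subtlety is making sure the extension of $u$ to a smooth orthonormal frame exists locally, which is standard.
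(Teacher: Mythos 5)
Your construction is correct and is the standard one; note that the paper itself offers no proof of this lemma, deferring to Gray's \emph{Tubes}, and your argument is essentially the argument found there: normalize the orthonormal frame of $\nu$ so that $E_{k+1}(p)=u$ and read all four assertions off the definition of Fermi coordinates and the identity $\gamma(t)=\exp_\nu(tE_{k+1}(p))$. The only detail worth adding is that the chart $(y_1,\ldots,y_k)$ on $N$ must also be chosen centered at $p$, i.e.\ with $y_l(p)=0$ for all $l$; otherwise $x_l(\gamma(t))=y_l(p)$ need not vanish and the claimed formula $(x_j\circ\gamma)(t)=t\delta_{j(k+1)}$ would fail for $1\le j\le k$.
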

\begin{defn}
Let $(x_1,\cdots,x_n)$ be a system of Fermi coordinates for $N\subset M$. Define $\sigma(x_1,\cdots,x_n)$ to be the non-negative number satisfying
\begin{displaymath}
\sigma^2 = \sum_{i=k+1}^n x_i^2.
\end{displaymath}
\end{defn}
\noindent It is known that $\sigma$ does not depend on the choice of Fermi coordinates.
%
\begin{prpn}\label{dsq-Fermi}
Let $U$ be a neighbourhood of $N$ such that each point in $U$ admits a unique unit speed $N$-geodesic. If $p\in U$, then 
\begin{displaymath}
\sigma(p) = \dist(N,p).
\end{displaymath}
\end{prpn}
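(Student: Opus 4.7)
The plan is to reduce the claim directly to Lemma \ref{Lemma: geodesic in fermi} by setting up Fermi coordinates adapted to the unique $N$-geodesic ending at $p$. Given $p\in U$, let $\gamma:[0,t_0]\to M$ be the unique unit speed $N$-geodesic with $\gamma(0)=p_0\in N$ and $\gamma(t_0)=p$. By Definition \ref{distmin} we have $t_0=l(\gamma)=d(N,p)$, so it suffices to show $\sigma(p)=t_0$.

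First I would pick a coordinate chart $(y_1,\ldots,y_k)$ on $N$ around $p_0$ together with orthonormal sections $E_{k+1},\ldots,E_n$ of $\nu$ over its domain, chosen so that $E_{k+1}(p_0)=\gamma'(0)$. This produces a system of Fermi coordinates $(x_1,\ldots,x_n)$ centred at $p_0$ that satisfies the hypothesis of Lemma \ref{Lemma: geodesic in fermi}, and the lemma then gives
\begin{displaymath}
(x_j\circ\gamma)(t)\;=\;t\,\delta_{j(k+1)}, \qquad 1\le j\le n,
\end{displaymath}
on the set of $t$ for which $(p_0,t\gamma'(0))\in\mathcal{O}_N$. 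Evaluating at $t=t_0$ identifies the Fermi coordinates of $p$ as $(y_1(p_0),\ldots,y_k(p_0),t_0,0,\ldots,0)$, whence
\begin{displaymath}
\sigma(p)^2 \;=\; \sum_{i=k+1}^n x_i(p)^2 \;=\; t_0^2,
\end{displaymath}
so $\sigma(p)=t_0=d(N,p)$; the coordinate-independence of $\sigma$ recorded just before the proposition makes the answer intrinsic.

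The main obstacle is the intermediate step of verifying that the whole segment $\{(p_0,t\gamma'(0)):t\in[0,t_0]\}$ lies in $\mathcal{O}_N$, for only then is the Fermi chart actually defined on a neighbourhood of $p$. This is exactly the point at which the uniqueness of the $N$-geodesic is used: if $\exp_\nu$ failed to be a local diffeomorphism at some parameter $t^\ast\le t_0$, then $\gamma(t^\ast)$ would be a focal or cut point of $N$ along $\gamma$, either destroying the minimality of $\gamma|_{[0,t_0]}$ (for $t^\ast<t_0$) or yielding a second minimising $N$-geodesic arbitrarily close to $p$, contradicting the hypothesis on $U$. After shrinking $U$ if necessary so that $U\subseteq \exp_\nu(\mathcal{O}_N)$, this obstruction disappears and the coordinate computation above gives the result.
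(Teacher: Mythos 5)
Your proof is correct and follows essentially the same route as the paper: choose the unique $N$-geodesic $\gamma$ from $N$ to $p$, invoke Lemma \ref{Lemma: geodesic in fermi} to get Fermi coordinates in which $(x_j\circ\gamma)(t)=t\delta_{j(k+1)}$, and read off $\sigma(p)=t_0=\dist(N,p)$. Your closing paragraph verifying that the segment $t\gamma'(0)$, $0\le t\le t_0$, stays in $\mathcal{O}_N$ addresses a point the paper leaves implicit, and is a reasonable addition.
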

\begin{proof}
Since the expression of $\sigma$ is independent of the choice of the Fermi coordinates, we will make a special choice of the Fermi coordinates $(x_1,\cdots,x_n)$. For $p\in U$, choose the unique unit speed $N$-geodesic $\gamma$ joining $p$ to $N$. This geodesic meets $N$ orthogonally at $\gamma(0)=p'$. Choose $t_0$ such that $\gamma(t_0)= p$.
\begin{figure}[htbp]
\centering
\includegraphics[width=0.85\textwidth]{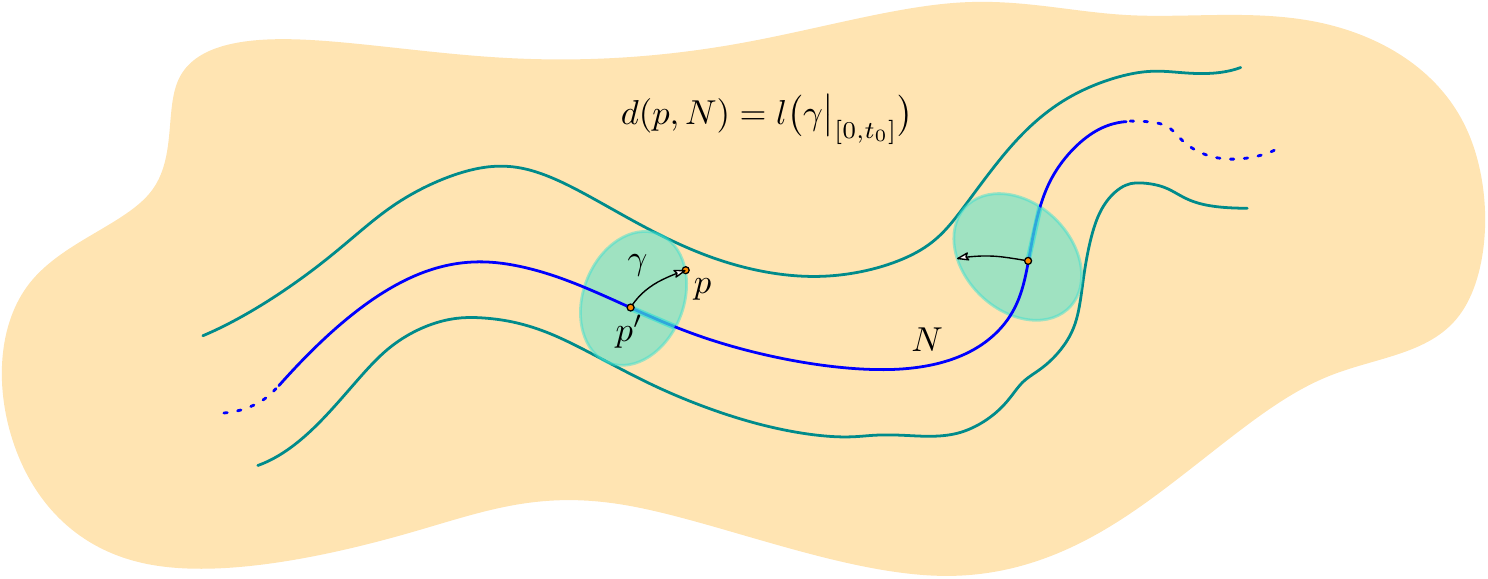}
\caption{Distance via Fermi coodinates}
\end{figure}
According to Lemma \ref{Lemma: geodesic in fermi}, there is a system of Fermi coordinates $(x_1,\cdots,x_n)$ centered at $p'$ such that $x_i(\gamma(t)) = t\delta_{i(k+1)}$. The sequence of equalities 
\begin{displaymath}
\sigma(p) = x_{k+1}(\gamma(t_0)) = t_0 = \dist(p,N)
\end{displaymath}
complete the proof.
\end{proof}
\begin{cor}\label{dsq-MB}
Consider the distance squared function with respect to a submanifold $N$ in $M$. The Hessian of the distance squared function at the critical submanifold $N$ is non-degenerate in the normal direction. 
\end{cor}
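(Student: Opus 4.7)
The plan is to apply Proposition \ref{dsq-Fermi} directly and read off the Hessian from the Fermi coordinate expression of the distance squared function. Fix $p\in N$. First I would choose a Fermi coordinate chart $(x_1,\ldots,x_n)$ centered at $p$, with the first $k$ coordinates tangent to $N$ and the last $n-k$ coordinates normal, valid on some open neighborhood $V\ni p$ inside the tubular neighborhood $U$ where every point admits a unique unit speed $N$-geodesic (such a $U$ exists because $\exp_\nu$ is a diffeomorphism on a neighborhood of the zero section of $\nu$).

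On $V$, Proposition \ref{dsq-Fermi} gives $\mathrm{dist}(N,q)=\sigma(q)$, and hence
\begin{equation*}
f(q)=d^2(q,N)=\sigma(q)^2=\sum_{i=k+1}^n x_i(q)^2.
\end{equation*}
All points of $N$ are clearly critical for $f$ since $f\ge 0$ and $f|_N\equiv 0$; in Fermi coordinates this is immediate from $\partial f/\partial x_j=2x_j$ if $j\ge k+1$ and $\partial f/\partial x_l=0$ if $l\le k$, each of which vanishes at $p$. Because $p$ is a critical point, the Hessian is independent of the choice of connection and equals the matrix of ordinary second partial derivatives in any smooth coordinate system. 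A direct computation gives
\begin{equation*}
\frac{\partial^2 f}{\partial x_i\partial x_j}(p)=2\delta_{ij}\quad\text{if }i,j\ge k+1,\qquad \frac{\partial^2 f}{\partial x_i\partial x_j}(p)=0\quad\text{otherwise}.
\end{equation*}

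Next I would translate this back to an intrinsic statement. By Lemma \ref{Lemma: geodesic in fermi}, the coordinate vectors $\partial/\partial x_1,\ldots,\partial/\partial x_k$ span $T_pN$ and $\partial/\partial x_{k+1},\ldots,\partial/\partial x_n$ form an orthonormal basis of $(T_pN)^\perp$ at $p$. Consequently the Hessian $\mathrm{Hess}_p(f)$, written as a bilinear form on $T_pM=T_pN\oplus (T_pN)^\perp$, has block form
\begin{equation*}
\mathrm{Hess}_p(f)=\begin{pmatrix} 0 & 0\\ 0 & 2I_{n-k}\end{pmatrix}.
\end{equation*}
In particular, restricted to $(T_pN)^\perp$ the Hessian is $2$ times the identity, so for any nonzero $V\in (T_pN)^\perp$ we have $\mathrm{Hess}_p(f)(V,V)=2|V|^2\ne 0$, verifying non-degeneracy in the normal direction.

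The only point requiring any care is the justification of the intrinsic interpretation of the matrix of second partials at a critical point, and the remark that Fermi coordinates can be chosen around an arbitrary $p\in N$; neither is a genuine obstacle, since the first is standard and the second follows from the fact that $\exp_\nu$ is a local diffeomorphism near the zero section. No global choice of chart is needed: the computation is pointwise in $p\in N$.
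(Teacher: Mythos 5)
Your proof is correct and follows exactly the route the paper intends: the corollary is stated as an immediate consequence of Proposition \ref{dsq-Fermi}, namely that $f=\sigma^2=\sum_{i=k+1}^n x_i^2$ in Fermi coordinates, whose Hessian at $p\in N$ restricted to $(T_pN)^\perp$ is $2I_{n-k}$ because the normal coordinate fields are orthonormal along $N$. Your write-up simply supplies the details (well-definedness of the Hessian at a critical point, the pointwise choice of chart) that the paper leaves implicit.
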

Towards the regularity of distance squared function, the following observation will be useful. It is a routine generalization of \cite[Lemma~1]{Wol79}.
\begin{lmm} \label{Lmm: singdsq}
Let $M$ be a connected, complete Riemannian manifold and $N$ be an embedded submanifold of $M$. Suppose two $N$-geodesics exist joining $N$ to $q\in M$. Then $d^2(N,\cdot):M\to \rbb$ has no directional derivative at $q$ for vectors in direction of those two $N$-geodesics. 
\end{lmm}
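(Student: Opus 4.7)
The plan is to exhibit, for the direction $v_1$ of one of the two $N$-geodesics at $q$, left-hand and right-hand limits of the difference quotient of $f=d^2(N,\cdot)$ that disagree; the case of the other geodesic is completely symmetric. Write $\gamma_1,\gamma_2:[0,r]\to M$ for the two unit-speed $N$-geodesics with $\gamma_i(r)=q$ and $r=d(N,q)$, and set $v_i=\gamma_i'(r)$. Uniqueness of solutions of the geodesic ODE forces $v_1\neq v_2$, so the angle $\alpha\defeq\angle(v_1,v_2)\in(0,\pi]$ is strictly positive.

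By completeness of $M$, extend $\gamma_1$ past $q$ and set $\beta(t)=\gamma_1(r+t)$, so $\beta(0)=q$ and $\beta'(0)=v_1$. For the left-hand limit, observe that for $t<0$ small enough, $\beta(t)$ lies on the minimizing segment $\gamma_1|_{[0,r]}$; every initial subarc of a minimizing geodesic is itself minimizing, so $d(N,\beta(t))=r+t$ and consequently $f(\beta(t))=(r+t)^2$. This gives
\[
\lim_{t\to 0^-}\frac{f(\beta(t))-f(q)}{t}=2r.
\]

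For the right-hand limit, I would apply a \emph{cut-the-corner} estimate using $\gamma_2$. For $s>0$ small, set $a_s=\gamma_2(r-s)$ and $b_s=\beta(s)$; these are points at distance $s$ from $q$ along geodesics leaving $q$ in the directions $-v_2$ and $v_1$, respectively. In normal coordinates at $q$ one has $\exp_q^{-1}(a_s)=-sv_2$ and $\exp_q^{-1}(b_s)=sv_1$, and the metric satisfies $g_{ij}(x)=\delta_{ij}+O(\|x\|^2)$. Hence
\[
d(a_s,b_s)=\|sv_1-(-sv_2)\|+O(s^3)=2s\cos(\alpha/2)+O(s^3).
\]
The triangle inequality, together with $d(\gamma_2(0),a_s)=r-s$, then yields
\[
d(N,\beta(s))\leq d(\gamma_2(0),a_s)+d(a_s,b_s)=(r-s)+2s\cos(\alpha/2)+O(s^3),
\]
and squaring gives
\[
f(\beta(s))\leq r^2+2rs\bigl(2\cos(\alpha/2)-1\bigr)+O(s^2).
\]
Since $\alpha>0$ we have $2\cos(\alpha/2)-1<1$, so $\limsup_{s\to 0^+}(f(\beta(s))-f(q))/s\leq 2r(2\cos(\alpha/2)-1)<2r$, strictly below the left-hand limit. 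Thus $f$ has no directional derivative at $q$ in direction $v_1$, and interchanging the roles of $\gamma_1$ and $\gamma_2$ handles the direction $v_2$.

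The only non-routine ingredient is the estimate $d(a_s,b_s)=2s\cos(\alpha/2)+O(s^3)$ in a general Riemannian manifold; this is the standard second-order comparison between Euclidean and Riemannian distances in normal coordinates near $q$, and is where the Riemannian geometry, as opposed to pure triangle-inequality bookkeeping, enters. Everything else follows from completeness, the minimality of sub-arcs of minimizing geodesics, and the triangle inequality.
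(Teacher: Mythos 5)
Your proof is correct, and its skeleton is the same as the paper's: compute the one-sided derivative from inside the minimizing segment (getting $2r$, using that sub-arcs of minimizing geodesics are minimizing), then beat it on the other side by cutting the corner through the second $N$-geodesic and invoking the triangle inequality. The only real difference is the implementation of the corner-cut estimate. The paper fixes a foot point $\gamma_1(l-\varepsilon)$ on one geodesic, lets the tip $\gamma_2(l+\tau)$ vary, and expands $d(\gamma_1(l-\varepsilon),\gamma_2(l+\tau))$ via a cosine law for small geodesic triangles (quoted from Sharafutdinov), which yields the sharp upper bound $2l\cos\omega$ for the right derivative. You instead shrink both legs at the same rate $s$ and compare Riemannian with Euclidean distance in normal coordinates at $q$, obtaining $d(a_s,b_s)\le 2s\cos(\alpha/2)+O(s^3)$ and hence the bound $2r\bigl(2\cos(\alpha/2)-1\bigr)$, which is not sharp (the true right derivative is $2r\cos\alpha\le 2r(2\cos(\alpha/2)-1)$) but is all that is needed, since it is strictly below $2r$ whenever $\alpha>0$. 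Your version is somewhat more self-contained, trading the external cosine-law lemma for the standard second-order expansion of the metric in normal coordinates; the paper's version gives the exact one-sided derivative. Two trivial points you leave implicit and could state: $r>0$ (otherwise $q\in N$ and there are not two distinct $N$-geodesics), which is needed for $2r\bigl(2\cos(\alpha/2)-1\bigr)<2r$ to be strict; and only the upper bound in your displayed equality for $d(a_s,b_s)$ is actually used, which is the easy direction (length of the coordinate straight segment).
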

\begin{proof}
Let us assume that all the geodesics are arc-length parametrized. Let $\gamma_i:[0,\hat{t}]\to M,~~i=1,2$,  be two distinct geodesics with $\gamma_1(0), \gamma_2(0)\in N$ and $\gamma_1(l)=q=\gamma_2(l)$, where $l=d(N,q)$ and $0<l<\hat{t}.$ Let us suppose that the two geodesics start at $p_1$ and $p_2$ and so $d(p_1,q)=l=d(p_2,q)$. Note that the directional derivative of $d^2$ at $q$ in the direction of $\gamma_i'(q)$ from the left is given by
\begin{align*}
(d^2)'_-(q) & := \lim_{\varepsilon\to 0^+} \dfrac{(d(N,\gamma_i(l)))^2-(d(N,\gamma_i(l-\varepsilon)))^2}{\varepsilon}\\
			& = \lim_{\varepsilon\to 0^+} \dfrac{l^2-(l-\varepsilon)^2}{\varepsilon}\\
			& = 2l.
\end{align*}
Next, we claim that the derivative of the same function from the right is strictly bounded above by $2l$. Let $\omega\in(0,\pi]$ be the angle between the two geodesics $\gamma_1$ and $\gamma_2$ at $q$.  Define the function,
\begin{equation*}
u(\tau)\defeq d(N,\gamma_1(l-\varepsilon))+d(\gamma_1(l-\varepsilon),\gamma_2(\tau+l)).
\end{equation*} 
\begin{figure}[h]
\centering
\includegraphics[scale=0.8]{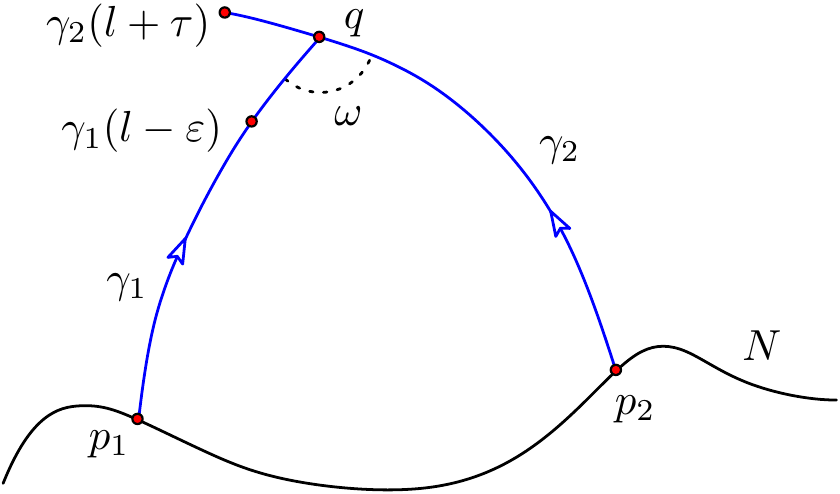}
\caption{When two $N$-geodesics meet}
\label{fig: notDifferentiable}
\end{figure}
By triangle inequality, we observe that
\begin{equation*}
f(\tau)\defeq (u(\tau))^2 \ge d^2(p_1,\gamma_2(\tau+l)) \ge d^2(N,\gamma_2(\tau+l)),
\end{equation*}
and equality holds at $\tau=0$ and $(u(0))^2=d^2(N,q)=l^2$. Thus, in order to prove the claim, it suffices to show that the derivative of $f$ from right, at $\tau=0$, is bounded below by $2l$. We need to invoke a version of the cosine law for small geodesic triangles. Although this may be well-known to experts, we will use the version that appears in Sharafutdinov's work \cite{Sha07} (see \cite[Lemma 2.4]{DaDe18} for a detailed proof). In our case, this means that 
\bgd
d^2(\gamma_1(l-\varepsilon),\gamma_2(\tau+l))=\ep^2+\tau^2+2\ep\tau \cos \omega +K(\tau)\ep^2\tau^2
\edd
where $|K(\tau)|$ is bounded and the side lengths are sufficiently small. Note that we are considering geodesic triangles with two vertices constant and the varying vertex being $\gamma_2(l+\tau)$. It follows from taking a square root and then expanding in powers of $\tau$ that
\bgd
d(\gamma_1(l-\varepsilon),\gamma_2(\tau+l))=\sqrt{\varepsilon^2+\tau^2+2\varepsilon\tau\cos\omega}~(1+O(\tau^2)).
\edd
It follows that 
		\begin{displaymath}
			u(\tau)=l-\ep+\sqrt{\varepsilon^2+\tau^2+2\varepsilon\tau\cos\omega}~(1+O(\tau^2)).
		\end{displaymath} 
		Therefore, $u'_+(0)=\cos \omega=d'_+(\gamma_1(l-\varepsilon),\gamma_2(l))$. Observe that 
		\begin{align*}
			 f'_+(\tau)\Big|_{\tau=0} & = 2d(N,\gamma_1(l-\varepsilon))d'_+(\gamma_1(l-\varepsilon),\gamma_2(l)) +2d(\gamma_1(l-\varepsilon),\gamma_2(l))d'_+(\gamma_1(l-
			\varepsilon),\gamma_2(l))\\
			 & = 2d(N,\gamma_1(l-\varepsilon))\cos \omega+2d(\gamma_1(l-\varepsilon),\gamma_2(l))\cos\omega\\
			 & =2d(N,\gamma_1(l))\cos\omega<2l.
		\end{align*}
Thus, we have proved the claim and subsequently the result.
\end{proof}
The above lemma prompts us to define the following set, the notation being consistent with Wolter's paper \cite{Wol79}.
\begin{defn}
Let $N$ be a subset of a Riemannian manifold $M$. The set $\sen$, called the \textit{separating set}\footnote{We could not find any name for this set in the literature. This terminology is our own although this nomenclature is rarely used in the paper.}, consists of all points $q\in M$ such that at least two distance minimal geodesics from $N$ to $q$ exist.
\end{defn}	
\noindent If $q\in \sen$ but $q\not\in \cutn$, then we have figure \ref{fig: notDifferentiable}, i.e., $\gamma_1$ is an $N$-geodesic beyond $q$ while $\gamma_2$ is another $N$-geodesic for $q$. The triangle inequality applied to $\gamma_1(0)$, $q=\gamma_1(l)$ and $\gamma_2(l+\tau)$ implies that 
\bgd
d(\gamma_2(l+\tau),N)< l+\tau
\edd
while for $\tau$ small enough $d(\gamma_2(l+\tau),N)=l+\tau$ as $\gamma_2$ is an $N$-geodesic beyond $q$. This contradiction establishes the well-known fact $\sen\subseteq \cutn$. In quite a few examples, these two sets are equal. In the case of $M=\mathbb{S}^n$ with $N=\{p\}$, the set $\sen$ consists of $-p$. There is an infinite family of minimal geodesics joining $p$ to $-p$. An appropriate choice of a pair of such minimal geodesics would create a loop, which is permissible in the definition of $\sen$.\\
\hf Regarding the question of cut loci being triangulable, we recall the result of Buchner \cite{Buc77} that the cut locus (of a point) of a real analytic Riemannian manifold (of dimension $d$) is a simplicial complex of dimension at most $d-1$. It follows, without much changes, that the result holds for cut locus of submanifolds as well. Hence, we attribute the following result to Buchner. 
\begin{thm}[Buchner 1977]\label{Buchner}
Let $N$ be an analytic submanifold of a real analytic manifold $M$. If $M$ is of dimension $d$, then the cut locus $\cutn$ is a simplicial complex of dimension at most $d-1$.
\end{thm}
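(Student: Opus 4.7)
The plan is to follow Buchner's original strategy, replacing the tangent space at a point by the total space of the normal bundle $\nu$ of $N$. Let $S\nu \subset \nu$ denote the unit normal bundle; since $N \subset M$ is an analytic pair, $\nu$ and $S\nu$ are analytic manifolds and the normal exponential map $\exp_\nu : \nu \to M$ is real analytic. For each $(p,v) \in S\nu$ define the \emph{first cut time}
\[
\rho(p,v) := \sup\bigl\{t>0 : s\mapsto \exp_\nu(p,sv)\text{ is a distance-minimizing $N$-geodesic on }[0,t]\bigr\}\in (0,\infty].
\]
Then $\cutn = \{\exp_\nu(p,\rho(p,v)v) : (p,v)\in S\nu,\ \rho(p,v)<\infty\}$, and I will show that the graph of $\rho$ (restricted to its finite locus), together with its image under $\exp_\nu$, is a subanalytic subset of $M$.

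The key technical step is the semi-analytic/subanalytic characterization of cut points. By the standard dichotomy, $q = \exp_\nu(p,tv)$ with $(p,v)\in S\nu$ is the first cut point along $\gamma_{(p,v)}$ if and only if either (a) $q$ is the first \emph{focal point} of $N$ along $\gamma_{(p,v)}|_{[0,t]}$, meaning $d(\exp_\nu)_{(p,tv)}$ drops rank, or (b) there exists $(p',v')\in S\nu$ with $(p',v')\neq (p,v)$ such that $\exp_\nu(p',tv') = q$ and $s\mapsto \exp_\nu(p',sv')$ is an $N$-geodesic on $[0,t]$. Condition (a) is cut out by the analytic equation $\det\bigl(d(\exp_\nu)_{(p,tv)}\bigr)=0$ together with the inequalities defining minimality. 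Condition (b) is the projection of an analytic equality condition in $S\nu\times S\nu\times (0,\infty)$. Hence the set of triples $\{(p,v,t) : \exp_\nu(p,tv)\in\cutn,\ t=\rho(p,v)\}$ is subanalytic, and so is its proper image $\cutn \subset M$ under $\exp_\nu$ (using that the subset of $S\nu$ on which $\rho$ is bounded maps properly into $M$ when $M$ is closed, or working locally otherwise).

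Once $\cutn$ is known to be subanalytic, I would invoke the triangulation theorem of \L{}ojasiewicz--Hironaka--Hardt for subanalytic sets to obtain a simplicial complex structure on $\cutn$ compatible with the stratification by the number of distinct minimizing $N$-geodesics reaching each point. Finally, for the dimension bound, Theorem A (or, more precisely, the fact that the normal exponential map is a diffeomorphism from the open set $\mathcal{O}_N \subset \nu$ onto $M-\cutn$) shows that $\cutn$ has empty interior in $M$; a subanalytic subset of a $d$-dimensional analytic manifold with empty interior has dimension at most $d-1$, completing the proof.

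The main obstacle is the subanalyticity of $\cutn$, and within that, handling condition (b): one must argue that the existence of a \emph{second} minimizing geodesic of equal length, which is a condition obtained by projecting out a parameter, stays within the subanalytic category. This is why the argument genuinely needs subanalytic (as opposed to merely semi-analytic) sets, together with the properness/local-finiteness of the relevant projections, which is precisely the technical point where Buchner's proof for points must be adapted carefully to the submanifold setting.
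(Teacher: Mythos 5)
Your proposal follows the same overall strategy as the paper, which itself only indicates how to adapt Buchner's original argument: establish that $\cutn$ is subanalytic via the dichotomy ``first focal point or a second minimizing normal geodesic,'' then invoke the triangulation theorem for subanalytic sets, and get the dimension bound from the fact that $\cutn$ has empty interior. The one substantive difference is in how the minimality condition is encoded. The paper, following Buchner verbatim, routes everything through the finite-dimensional spaces $\Omega_N(t_0,\ldots,t_k)^s$ of broken geodesics starting on $N$, using the evaluation map into $N\times M\times\cdots\times M$ to give these spaces an analytic structure; minimality and the existence of competing minimizers are then expressed as conditions on these compact analytic spaces, so that the relevant projections are automatically proper. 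You instead quantify directly over $S(\nu)\times(0,\infty)$ via $\exp_\nu$, which is legitimate because every $N$-minimizer is a normal geodesic (first variation) and the competing parameter $t'$ is bounded by $t$, so the projection is still locally proper; this is arguably cleaner, though it is exactly the step where one must be explicit that ``$\gamma_v|_{[0,t]}$ is minimal'' is the complement of a proper projection of an analytic set, rather than merely ``inequalities defining minimality.'' With that point spelled out, both routes land in the same subanalytic-plus-Hironaka--Hardt framework and yield the same conclusion.
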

\noindent The obvious modifications to the proof by Buchner are the following:\\
\hf (i) Choose $\ep$ to be such that there is a unique geodesic from $p$ to $q$ if $d(p,q)<\ep$ and if $d(N,q)<\ep$, then there is a unique $N$-geodesic to $q$;\\
\hf (ii) Consider the set $\Omega_N(t_0,t_1,\ldots,t_k)$, the space of piecewise broken geodesics starting at $N$, and define $\Omega_N(t_0,t_1,\ldots,t_k)^s$ analogously;\\
\hf (iii) The map
\bgd
\Omega_N(t_0,t_1,\ldots,t_k)^s\to N\times M\times \cdots\times M,\,\,\omega\mapsto (\omega(t_0),\omega(t_1),\ldots,\omega(t_k))
\edd
determines an analytic structure on $\Omega_N(t_0,t_1,\ldots,t_k)^s$.\\
The remainder of the proof works essentially verbatim.
\begin{rem}
As we have seen in example \ref{join}, the dimension of the cut locus of a $k$-dimensional submanifold is $d-k-1$. However, generically, we may not expect this to be true. In fact, for real analytic knots (except the unknot) in $\mathbb{S}^3$, it is always the case that the cut locus cannot be homotopic to a (connected) $1$-dimensional simplicial complex (cf. Example \ref{codim2}). 
\end{rem}


\subsection{Thom space via cut locus}\label{Sec: Thom}

\hf Let $(M,g)$ be a complete Riemannian manifold with distance function $d$. The exponential map at $p$
\bgd
\textup{exp}_p:T_p M\to M
\edd
is defined on the tangent space. Moreover, there exists minimal geodesic joining any two points in $M$. However, not all geodesics are distance realizing. Given $v\in T_p M$ with $\|v\|=1$, let $\gamma_v$ be the geodesic initialized at $p$ with velocity $v$. Let $S(TM)$ denote the unit tangent bundle and let $ [0,\infty]$ be the one point compactification of $ [0,\infty)$. Define 
\bgd
s:S(TM)\to [0,\infty],\,\,s(v):=\sup\{t\in [0,\infty)\,|\,\gamma_v |_{[0,t]}\,\,\textup{is minimal}\}.
\edd
\begin{defn}\label{cutlocus2}[Cut Locus]
Let $M$ be a complete, connected Riemannian manifold. If $s(v)<\infty$ for some $v\in S(T_pM)$, then $\textup{exp}_p(s(v)v)$ is called a \textit{cut point}. The collection of cut points is defined to be the cut locus of $p$.
\end{defn}
\noindent \hf As geodesics are locally distance realizing, $s(v)> 0$ for any $v\in S(TM)$. The following result \cite[Proposition 4.1]{Sak96} will be important for the underlying ideas in its proof.
\begin{prpn}\label{scts}
The  map $s:S(TM)\to [0,\infty], u\mapsto s(u)$ is continuous.
\end{prpn}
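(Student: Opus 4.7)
The plan is to show separately that $s$ is upper semi-continuous and lower semi-continuous at an arbitrary $u_0\in S(T_{p_0}M)$.

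\emph{Upper semi-continuity.} Suppose $u_n\to u_0$ with $u_n\in S(T_{p_n}M)$, and assume $s(u_0)=t_0<\infty$ (otherwise there is nothing to show). If $\limsup s(u_n)>t_0$, pass to a subsequence with $s(u_n)\ge t_0+\delta$ for some $\delta>0$, so each $\gamma_{u_n}|_{[0,t_0+\delta]}$ is minimizing, i.e.\ $d(p_n,\gamma_{u_n}(t_0+\delta))=t_0+\delta$. Joint continuity of $\exp$ and $d$ forces $d(p_0,\gamma_{u_0}(t_0+\delta))=t_0+\delta$, contradicting $s(u_0)=t_0$. The cases $s(u_0)=\infty$ or $\limsup s(u_n)=\infty$ are handled by the same inequality (reading $\infty$ as the one-point compactification).

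\emph{Lower semi-continuity.} This is the main obstacle. Suppose $\liminf s(u_n)=L<s(u_0)$; passing to a subsequence I may assume $s(u_n)\to L$ and $s(u_n)<t_0$ for all large $n$. Let $q_n=\gamma_{u_n}(s(u_n))$ and $q_*=\gamma_{u_0}(L)$, so $q_n\to q_*$. Each cut point $q_n$ is either (a) the first conjugate point of $p_n$ along $\gamma_{u_n}$, or (b) the endpoint of two distinct minimizing geodesics from $p_n$. Pass to a subsequence so that one alternative holds uniformly.

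In case (a), continuous dependence of Jacobi fields on initial data shows $q_*$ would be conjugate to $p_0$ along $\gamma_{u_0}$; but $L<t_0$ lies in the interior of a minimizing geodesic, which cannot carry interior conjugate points — contradiction. In case (b), let $v_n\neq u_n$ be the initial velocity of a second minimizing geodesic from $p_n$ to $q_n$, so $v_n\in S(T_{p_n}M)$. By compactness of unit tangent spheres (using $p_n\to p_0$) I pass to a subsequence $v_n\to v_*\in S(T_{p_0}M)$. Taking the limit of $\exp_{p_n}(s(u_n)v_n)=q_n$ gives $\gamma_{v_*}(L)=q_*$, and since each $\gamma_{v_n}|_{[0,s(u_n)]}$ is minimizing the limit $\gamma_{v_*}|_{[0,L]}$ is minimizing as well. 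I need $v_*\neq u_0$: since $L<t_0$ and $\gamma_{u_0}|_{[0,t_0]}$ is minimizing, $q_*=\gamma_{u_0}(L)$ is not conjugate to $p_0$, so $\exp_{p_0}$ is a local diffeomorphism on a neighborhood of $Lu_0$ in $T_{p_0}M$; if $v_*=u_0$ then for large $n$ the vectors $s(u_n)u_n$ and $s(u_n)v_n$ lie in that neighborhood and both map to $q_n$, forcing $u_n=v_n$, a contradiction.

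Thus $v_*\neq u_0$ gives two distinct minimizing geodesics from $p_0$ to $q_*$. Concatenating $\gamma_{v_*}|_{[0,L]}$ with $\gamma_{u_0}|_{[L,L+\varepsilon]}$ (for small $\varepsilon$ with $L+\varepsilon<t_0$) produces a broken curve of length $L+\varepsilon$ from $p_0$ to $\gamma_{u_0}(L+\varepsilon)$. The broken curve is not smooth at $q_*$, so it cannot be length-minimizing, yet $d(p_0,\gamma_{u_0}(L+\varepsilon))=L+\varepsilon$ because $\gamma_{u_0}|_{[0,t_0]}$ is minimizing. This contradiction yields $\liminf s(u_n)\ge s(u_0)$, completing the proof.
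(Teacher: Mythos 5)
Your proof is correct and takes essentially the same route as the paper, which cites Sakai for this proposition but proves the submanifold analogue (Proposition \ref{snucts}) in Appendix \ref{Sec: cts-s} by exactly your strategy: the characterization of the cut time via a first conjugate (focal) point or a second minimizing geodesic, followed by a limiting argument in each case. The only step to tighten is the local-injectivity argument ruling out $v_*=u_0$: since $s(u_n)u_n$ and $s(u_n)v_n$ lie in $T_{p_n}M$ rather than $T_{p_0}M$, you should invoke that $(p,w)\mapsto(p,\exp_p(w))$ is a local diffeomorphism of $TM$ near $(p_0,Lu_0)$ (as the paper does with its map $\Phi$), rather than injectivity of $\exp_{p_0}$ on a neighborhood in $T_{p_0}M$ alone.
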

\noindent The proof relies on a characterization of $s(v)$ provided $s(v)<\infty$. A positive real number $T$ is $s(v)$ if and only if $\gamma_v:[0,T]$ is minimal and at least one of the following holds:\\
\hf (i) $\gamma_v(T)$ is the first conjugate point of $p$ along $\gamma_v$,\\
\hf (ii) there exists $u\in S(T_pM), u\neq v$ such that $\gamma_u(T)=\gamma_v(T)$.\\
Recall that if $\gamma:[0,a]\to M$ is a geodesic, then $q=\gamma(t_0)$ is conjugate to $p=\gamma(0)$ along $\gamma$ if $\exp_p$ is singular at $t_0\dot{\gamma}(0)$, i.e., $(D\exp_p)(t_0\dot{\gamma}(0))$ is not of full rank. 
\begin{rem}
If $M$ is compact, then it has bounded diameter, which implies that $s(v)<\infty$ for any $v\in S(TM)$. The converse is also true: if $M$ is complete and connected with $s(v)<\infty$ for any $v\in S(TM)$, then $M$ has bounded diameter, whence it is compact. 
\end{rem}
\noindent \hf We shall be concerned with closed Riemannian manifolds in what follows. Let $N$ be an embedded submanifold inside a 
closed, i.e., compact without boundary, manifold $M$. Let $\nu$ denote the normal bundle of $N$ in $M$ with $D(\nu)$ denoting the unit disk bundle. In the context of $S(\nu)$, the unit normal bundle and the cut locus of $N$, distance minimal geodesics or $N$-geodesics are relevant (see Definitions \ref{distmin} and \ref{cutlocus1}). We want to consider 
\begin{equation}\label{snu}
\mathpzc{s}:S(\nu)\to [0,\infty),\,\,\mathpzc{s}(v):=\sup\{t\in [0,\infty)\,|\,\gamma_v |_{[0,t]}\,\,\textup{is an $N$-geodesic}\}.
\end{equation}
Notice that $0<\mathpzc{s}(v)\leq s(v)$ for any $v\in S(\nu)$. In the special case when $N=\{p\}$, $\mathpzc{s}$ is simply the restriction of $s$ to $T_p M$. Analogous to Proposition \ref{scts}, we have the following result.
\begin{prpn}\label{snucts}
The map $\mathpzc{s}:S(\nu)\to [0,\infty)$, as defined in \eqref{snu}, is continuous.
\end{prpn}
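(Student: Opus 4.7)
The plan is to mirror the proof of Proposition \ref{scts}, replacing the point $p$ with the submanifold $N$ throughout. Since $M$ is closed its diameter is finite, so $\mathpzc{s}(v)<\infty$ for every $v\in S(\nu)$; also, as $N$ is a closed embedded submanifold of the compact manifold $M$, it is compact, and hence $S(\nu)$ is compact. The key input is the standard characterization: $T=\mathpzc{s}(v)$ iff $\gamma_v|_{[0,T]}$ is an $N$-geodesic and at least one of (i) $Tv\in\nu$ is a focal point of the normal exponential (i.e.\ $D\exp_\nu$ is singular at $Tv$), or (ii) there exists $u\in S(\nu)\setminus\{v\}$ with $\gamma_u|_{[0,T]}$ also an $N$-geodesic and $\gamma_u(T)=\gamma_v(T)$, holds. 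This rests on the classical fact that an $N$-geodesic ceases to be $N$-minimal beyond the first focal point, together with the observation that if neither (i) nor (ii) holds at $T$, a small perturbation argument extends $N$-minimality slightly past $T$.

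For upper semicontinuity, I would take $v_n\to v$ in $S(\nu)$ and set $L=\limsup_n \mathpzc{s}(v_n)$, which is finite by compactness of $M$. Passing to a subsequence with $\mathpzc{s}(v_{n_k})\to L$, the equality $d(\gamma_{v_{n_k}}(\mathpzc{s}(v_{n_k})),N)=\mathpzc{s}(v_{n_k})$ together with continuity of $\exp_\nu$ and of the distance function $d(\cdot,N)$ gives, in the limit, $d(\gamma_v(L),N)=L$. Hence $\gamma_v|_{[0,L]}$ is itself an $N$-geodesic, so $\mathpzc{s}(v)\ge L$.

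For the harder lower semicontinuity, I would argue by contradiction, assuming a subsequence (still $v_n$) satisfies $\mathpzc{s}(v_n)\to L<\mathpzc{s}(v)$. Each $q_n:=\gamma_{v_n}(\mathpzc{s}(v_n))$ is a cut point, so by the characterization one of (i), (ii) holds at each $v_n$. If (i) holds along a subsequence, continuity of $D\exp_\nu$ forces $D\exp_\nu$ to be singular at $Lv$, making $\gamma_v(L)$ a focal point of $N$ along $\gamma_v$, which would give $\mathpzc{s}(v)\le L$, a contradiction. If (ii) holds along a subsequence, pick $u_n\neq v_n$ with $\gamma_{u_n}(\mathpzc{s}(v_n))=q_n$; by compactness of $S(\nu)$ extract $u_n\to u\in S(\nu)$. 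When $u\neq v$, the two distinct $N$-geodesics $\gamma_u|_{[0,L]}$ and $\gamma_v|_{[0,L]}$ meet at $\gamma_v(L)$, and a broken-geodesic/corner smoothing argument (the same type used in the proof of Lemma \ref{Lmm: singdsq}) shows $\gamma_v$ cannot be $N$-minimal past $L$, again a contradiction.

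The main obstacle is the remaining subcase $u=v$: two distinct vectors $\mathpzc{s}(v_n)u_n\neq\mathpzc{s}(v_n)v_n$ both converge to $Lv$ while $\exp_\nu$ identifies them. Were $D\exp_\nu$ nonsingular at $Lv$, the inverse function theorem would force $\exp_\nu$ to be injective on a neighborhood of $Lv$, contradicting $\exp_\nu(\mathpzc{s}(v_n)u_n)=\exp_\nu(\mathpzc{s}(v_n)v_n)$ for all large $n$. Hence $Lv$ is a focal point of $\exp_\nu$, reducing to case (i) and yielding $\mathpzc{s}(v)\le L$. Combining upper and lower semicontinuity gives continuity of $\mathpzc{s}$ on $S(\nu)$.
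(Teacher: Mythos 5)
Your proof is correct and follows essentially the same route as the paper's: the same characterization of $\mathpzc{s}(v)$ via first focal points and second $N$-geodesics (Lemma \ref{chsnu}), the same three-way case analysis in the limit, and the same resolution of the coincident-limit subcase $u=v$ by local injectivity of the normal exponential (the paper phrases this through the graph map $\Phi=(\pi,\exp_\nu)$ where you apply the inverse function theorem to $\exp_\nu$ directly, which is equivalent). The only presentational difference is that you organize the argument as upper plus lower semicontinuity, whereas the paper shows that every accumulation point of $\{\mathpzc{s}(u_n)\}$ equals $\mathpzc{s}(u)$.
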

\noindent As expected, the proof of Proposition \ref{snucts} relies on a characterization of $\mathpzc{s}(v)$ similar to that of $s(v)$ (refer to Lemma \ref{chsnu} and \cite[Exercise 23, p. 241]{BiCr64}).\\
\hf Let us postpone the proofs (see Appendix \ref{Sec: cts-s}) and proceed with some immediate applications.
\begin{defn}[Rescaled Exponential]
The \textit{rescaled exponential} or $\mathpzc{s}$-exponential map is defined to be 
\bgd
\widetilde{\textup{exp}}:D(\nu)\to M,\,\,(p,v)\mapsto \left\{\begin{array}{rl}
\textup{exp}_p(\mathpzc{s}(\hat{v})v) & \textup{if $v=\|v\|\hat{v}\neq 0$}\\
p & \textup{if $v=0$.}
\end{array}\right.
\edd
\end{defn}
\noindent We are now ready to prove the main result of this section.
\begin{thm}\label{Thomsp}
Let $N$ be an embedded submanifold inside a closed, connected Riemannian manifold $M$. If $\nu$ denotes the normal bundle of $N$ in $M$, then there is a homeomorphism 
\bgd
\widetilde{\textup{exp}}:D(\nu)/S(\nu)\stackrel{\cong}{\longrightarrow} M/\cutn.
\edd
\end{thm}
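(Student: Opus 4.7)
\medskip

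\noindent\textbf{Proof plan.} The overall strategy is to verify that the rescaled exponential $\widetilde{\exp}\colon D(\nu)\to M$ is continuous, sends $S(\nu)$ into $\mathrm{Cu}(N)$ and nothing else from $D(\nu)$ lands there, and then conclude by the classical ``compact-to-Hausdorff continuous bijection is a homeomorphism'' principle. The inputs I will rely on are Proposition \ref{snucts} (continuity of $\mathpzc{s}$), the description of cut points in terms of $\mathpzc{s}$, and the closedness of $\mathrm{Cu}(N)$ guaranteed by Theorem \ref{thm: Theorem 2}.

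\medskip

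\noindent\textbf{Descent to the quotient.} First I would check that $\widetilde{\exp}$ is continuous. On $D(\nu)\setminus(\text{zero section})$ this is immediate from the continuity of $(p,v)\mapsto \exp_p(\mathpzc{s}(\hat v)v)$, combining Proposition \ref{snucts} with the smoothness of $\exp$. At the zero section, compactness of $S(\nu)$ (which follows from compactness of $M$ and hence of $N$) bounds $\mathpzc{s}$ uniformly, so $\|\mathpzc{s}(\hat v_n)v_n\|\to 0$ whenever $v_n\to 0$ and $\exp_{p_n}(\mathpzc{s}(\hat v_n)v_n)\to p$. Next, for $v\in S(\nu)$ the set of $t\geq 0$ with $\gamma_v|_{[0,t]}$ an $N$-geodesic is closed by continuity of $d(N,\cdot)$, so $\gamma_v|_{[0,\mathpzc{s}(v)]}$ is itself an $N$-geodesic that cannot be extended; hence $\widetilde{\exp}(p,v)\in\mathrm{Cu}(N)$. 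Therefore $\widetilde{\exp}$ descends to a continuous map
\begin{displaymath}
\overline{\widetilde{\exp}}\colon D(\nu)/S(\nu)\longrightarrow M/\mathrm{Cu}(N).
\end{displaymath}

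\medskip

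\noindent\textbf{Bijectivity.} For surjectivity, given $q\in M\setminus\mathrm{Cu}(N)$, completeness of $M$ and closedness of $N$ yield an $N$-geodesic $\gamma_v$ from some $p\in N$ to $q$, with $v\in S(\nu)$. Because $q\notin\mathrm{Cu}(N)$, this geodesic extends as an $N$-geodesic strictly past $q$, so $d(N,q)<\mathpzc{s}(v)$; then $r=d(N,q)/\mathpzc{s}(v)\in[0,1)$ and $\widetilde{\exp}(p,rv)=q$. The collapsed class $[\mathrm{Cu}(N)]$ is hit by any element of $S(\nu)$. For injectivity, the key lemma to establish is that $\widetilde{\exp}(p,rv)\notin\mathrm{Cu}(N)$ whenever $r<1$. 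Suppose otherwise: write $q=\gamma_v(t_0)$ with $t_0=r\mathpzc{s}(v)$ and suppose there is an $N$-geodesic $\beta$ to $q$, distinct from $\gamma_v|_{[0,t_0]}$, with no distance-minimal extension beyond $q$. Comparing the tangents $\dot\beta(t_0)$ and $\dot\gamma_v(t_0)$: if they agree, the extension of $\beta$ by $\gamma_v|_{[t_0,t_0+\delta]}$ is a smooth distance-minimal extension (since $\gamma_v$ remains an $N$-geodesic up to $\mathpzc{s}(v)$), contradicting the hypothesis; if they differ, the concatenation $\beta\cup\gamma_v|_{[t_0,t_0+\delta]}$ is a broken geodesic of length $t_0+\delta = d(N,\gamma_v(t_0+\delta))$, contradicting the first-variation principle which forbids broken minimizers. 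Hence the preimage of $[\mathrm{Cu}(N)]$ under $\overline{\widetilde{\exp}}$ is exactly $[S(\nu)]$. Injectivity on $D(\nu)\setminus S(\nu)$ then follows from the uniqueness of $N$-geodesics to points off $\mathrm{Cu}(N)$ (using $\mathrm{Se}(N)\subseteq\mathrm{Cu}(N)$): if $\widetilde{\exp}(p,rv)=\widetilde{\exp}(p',r'v')=q\notin\mathrm{Cu}(N)$, then $\gamma_v$ and $\gamma_{v'}$ coincide as $N$-geodesics to $q$, forcing $(p,\hat v)=(p',\hat{v'})$, and then $r\mathpzc{s}(v)=d(N,q)=r'\mathpzc{s}(v')$ forces $r=r'$.

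\medskip

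\noindent\textbf{Closing the argument and main obstacle.} The disk bundle $D(\nu)$ is compact because $N$ is a closed submanifold of the compact manifold $M$, so $D(\nu)/S(\nu)$ is compact. Since $\mathrm{Cu}(N)$ is closed in $M$ by Theorem \ref{thm: Theorem 2} and $M$ is Hausdorff, $M/\mathrm{Cu}(N)$ is Hausdorff. Therefore the continuous bijection $\overline{\widetilde{\exp}}$ from a compact space onto a Hausdorff space is automatically a homeomorphism. The step I expect to require the most care is the corner-cutting/first-variation argument that forbids cut points strictly before $\mathpzc{s}(v)\hat v$; it is where the geometric content of ``broken minimizers cannot be minimal'' is genuinely used, and it is essential both for the descent to the quotient (ensuring nothing outside $S(\nu)$ collapses) and for injectivity.
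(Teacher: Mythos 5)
Your proposal is correct and follows essentially the same route as the paper: continuity of $\widetilde{\exp}$ from Proposition \ref{snucts}, a bijectivity analysis resting on the characterization of $\mathpzc{s}(v)$, and the compact-to-Hausdorff principle using closedness of $\cutn$. The only cosmetic difference is that you re-derive inline (via the corner-cutting/first-variation argument) the fact that interior points of $D(\nu)$ cannot land in $\cutn$ and then invoke $\sen\subseteq\cutn$ for injectivity, whereas the paper funnels both steps through Lemma \ref{chsnu}, whose proof contains exactly the corner-cutting argument you give.
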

\begin{proof}
It follows from Proposition \ref{snucts} that the rescaled exponential is continuous. Moreover, $\widetilde{\textup{exp}}$ is surjective and $\widetilde{\textup{exp}}(S(\nu))=\cutn$. If there exists $(p,v)\neq (q,w)\in D(\nu)$ such that 
\bgd
\widetilde{\textup{exp}}(p,v)=\widetilde{\textup{exp}}(q,w)=p',
\edd
then $d(p',N)$ can be computed in two ways to obtain 
\bgd
d(p',N)=\mathpzc{s}(\hat{v})\|v\|=\mathpzc{s}(\hat{w})\|w\|.
\edd
Thus, $T=d(p',N)$ is a number such that $\gamma_v:[0,T]$ is an $N$-geodesic and $\gamma_v(T)=\gamma_w(T)=p'$. By Lemma \ref{chsnu}, we conclude that $T=\mathpzc{s}(\hat{v})=\mathpzc{s}(\hat{w})$, whence $\|v\|=\|w\|=1$. Therefore, $\widetilde{\textup{exp}}$ is injective on the interior of $D(\nu)$. \\
\hf As $\cutn$ is closed and $M$ is a compact metric space, the quotient space $M/\cutn$ is Hausdorff. As the quotient $D(\nu)/S(\nu)$ is compact, standard topological arguments imply the map induced by the rescaled exponential is a homeomorphism.
\end{proof}
\noindent\hf Recall that the \textit{Thom space} $\textup{Th}(E)$ of a real vector bundle $E\to B$ of rank $k$ is $D(E)/S(E)$, where it is understood that we have chosen a Euclidean metric on $E$. If $B$ is compact, then the Thom space $\textup{Th}(E)$ is the one-point compactification of $E$. In general, we compactify the fibres and then collapse the section at infinity to a point to obtain $\textup{Th}(E)$. Thus, Thom spaces obtained via two different metrics are homeomorphic. We will now revisit a basic property of Thom space via its connection to the cut locus. It can be seen that 
\begin{equation}\label{cutprod}
\mathrm{Cu}(N_1\times N_2)=(\mathrm{Cu}(N_1)\times M_2)\cup (M_1\times \mathrm{Cu}(N_2))
\end{equation}
for an embedding $N_1\times N_2$ inside $M_1\times M_2$. If $\nu_j$ is the normal bundle of $N_j$ inside $M_j$, then Theorem \ref{Thomsp} along with \eqref{cutprod} implies that
\begin{equation*}\label{Thomsmash}
\textup{Th}(\nu_1\oplus \nu_2)\cong \frac{M_1\times M_2}{(M_1\times \mathrm{Cu}(N_2))\cup (\mathrm{Cu}(N_1)\times M_2)}\cong \frac{M_1/\mathrm{Cu}(N_1)\times M_2/\mathrm{Cu}(N_2)}{M_1/\mathrm{Cu}(N_1)\vee M_2/\mathrm{Cu}(N_2)}\cong \textup{Th}(\nu_1)\wedge \textup{Th}(\nu_2).
\end{equation*}
Let $N=N_1\sqcup N_2$ be a disjoint union of connected manifolds of the same dimension. If $N\hookrightarrow M$, then let $\nu_j$ denote the normal bundle of $N_j$ in $M$. If $\nu$ is the normal bundle of $N$ in $M$, then 
\begin{equation}\label{Thomwedge}
\textup{Th}(\nu)\cong \textup{Th}(\nu_1)\vee \textup{Th}(\nu_2).
\end{equation}
This implies that
\bgd
M/\cutn \cong M/\mathrm{Cu}(N_1)\vee M/\mathrm{Cu}(N_2).
\edd
\begin{eg}
Consider the two circles
\bgd
N_1=\{(\cos t,\sin t,0,0)\,|\,t\in\R\},\,\,N_2=\{(0,0,\cos t,\sin t)\,|\,t\in\R\}
\edd
in $\mathbb{S}^3$. The link $N:=N_1\sqcup N_2$ has linking number $1$. It can be checked that 
\bgd
\cutn=\{\textstyle{\frac{1}{\sqrt{2}}}(\cos s,\sin s, \cos t,\sin t)\,|\,s,t\in\R\}
\edd
is a torus. Note that $\mathrm{Cu}(N_1)=N_2$ and vice-versa as well as 
\bgd
\mathbb{S}^3/\mathrm{Cu}(N_j)\cong (S^1\times S^2)/(S^1\times \infty)
\edd
where $S^1\times S^2$ is the fibrewise compatification of the normal bundle of $N_j$. We conclude that 
\bgd
\mathbb{S}^3/\cutn\cong \Big(\frac{S^1\times S^2}{S^1\times \infty}\Big)\vee \Big(\frac{S^1\times S^2}{S^1\times \infty}\Big)
\edd
\end{eg}
\hf There are some topological similarities between $\cutn$ and $M-N$. 
\begin{lmm}\label{defretM-N}
The cut locus $\cutn$ is a strong deformation retract of $M-N$. In particular, $(M,\cutn)$ is a good pair and the number of path components of $\cutn$ equals that of $M-N$.
\end{lmm}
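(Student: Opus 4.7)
\noindent\textit{Proof plan.} The plan is to promote the obvious radial deformation retraction of the punctured disk bundle $D(\nu) - N$ onto the sphere bundle $S(\nu)$ to a deformation retraction of $M - N$ onto $\mathrm{Cu}(N)$, using the rescaled exponential $\widetilde{\exp}$ of Section \ref{Sec: Thom} as the bridge. To avoid trouble with the non-injectivity of $\widetilde{\exp}$ on $S(\nu)$, I would not try to write down the retraction of $M-N$ by hand, but instead let it descend through a quotient map.

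\medskip

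\noindent\textit{Step 1.} Since $D(\nu)$ is compact and $M$ is Hausdorff, the continuous surjection $\widetilde{\exp}:D(\nu)\to M$ is a quotient map. The zero section of $\nu$ maps homeomorphically onto $N\subset M$, and $D(\nu)-N$ is a saturated open subset, so the restriction $\widetilde{\exp}:D(\nu)-N\to M-N$ is again a quotient map. By the proof of Theorem \ref{Thomsp}, $\widetilde{\exp}$ is injective on the interior $D(\nu)-S(\nu)$, and all non-trivial identifications take place within $S(\nu)$, which is collapsed onto $\mathrm{Cu}(N)$.

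\medskip

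\noindent\textit{Step 2.} On the punctured disk bundle, consider the standard radial deformation retraction
\[
\Phi:(D(\nu)-N)\times I\longrightarrow D(\nu)-N,\qquad \Phi\bigl((q,v),s\bigr)=\Bigl(q,\bigl(1-s+s/\|v\|\bigr)v\Bigr),
\]
which is the identity at $s=0$, lands in $S(\nu)$ at $s=1$, and is stationary on $S(\nu)$. I would then form the composition $\widetilde{\exp}\circ\Phi:(D(\nu)-N)\times I\to M-N$. Because $I$ is compact Hausdorff, $\widetilde{\exp}\times \mathrm{id}_I:(D(\nu)-N)\times I\to (M-N)\times I$ is still a quotient map, so to obtain a continuous $H:(M-N)\times I\to M-N$ descending $\widetilde{\exp}\circ\Phi$ it suffices to check that the composition is constant on fibers. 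Two distinct points $(q_1,v_1)\neq (q_2,v_2)$ in $D(\nu)-N$ with $\widetilde{\exp}(q_1,v_1)=\widetilde{\exp}(q_2,v_2)$ necessarily both lie in $S(\nu)$ by Step 1; but on $S(\nu)$ the homotopy $\Phi$ is the identity in the first coordinate and scales by $1$ in the second, so
\[
\widetilde{\exp}\bigl(\Phi((q_1,v_1),s)\bigr)=\widetilde{\exp}(q_1,v_1)=\widetilde{\exp}(q_2,v_2)=\widetilde{\exp}\bigl(\Phi((q_2,v_2),s)\bigr)
\]
for every $s\in I$, giving the required invariance.

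\medskip

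\noindent\textit{Step 3.} By construction $H(\cdot,0)=\mathrm{id}_{M-N}$, $H(\cdot,1)$ maps into $\mathrm{Cu}(N)$, and $H(p,s)=p$ for $p\in \mathrm{Cu}(N)$ and all $s$, i.e., $H$ is a strong deformation retraction of $M-N$ onto $\mathrm{Cu}(N)$. The two corollaries are then automatic: $\mathrm{Cu}(N)$ is closed (by Theorem \ref{thm: Theorem 2}) and admits the open neighborhood $M-N$ deformation retracting onto it, so $(M,\mathrm{Cu}(N))$ is a good pair; and deformation retracts preserve the set of path components. The only real subtlety I anticipate is the descent in Step 2; the cleanest way I see to handle it is the saturated-fiber / quotient-map argument above, which pushes the burden of analysis onto the injectivity statement already established in Theorem \ref{Thomsp}.
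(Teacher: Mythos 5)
Your proof is correct, and the deformation it produces is in fact the same one the paper uses: each point of $M-(\cutn\cup N)$ travels outward along its unique $N$-geodesic to the first cut point, with linear interpolation in the radial parameter. (Composing your $\widetilde{\exp}\circ\Phi$ with a choice of preimage recovers exactly the paper's formula $\exp_\nu\bigl[(t\,\mathpzc{s}(\hat w)+(1-t)\|w\|)\,\hat w\bigr]$ with $w=\exp_\nu^{-1}(q)$.) Where you genuinely differ is in how continuity is justified. The paper writes $H$ as a two-branch formula directly on $(M-N)\times[0,1]$ and asserts continuity from the continuity of $\mathpzc{s}$; the delicate point there is continuity at pairs $(q_0,t)$ with $q_0\in\cutn$, where $\exp_\nu^{-1}$ need not extend continuously and one must separately argue that the geodesic segments emanating from nearby points collapse onto $q_0$. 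You instead define the homotopy upstairs on $(D(\nu)-N)\times I$, where continuity is immediate, and descend it through $\widetilde{\exp}\times\mathrm{id}_I$, using that a continuous surjection from a compact space to a Hausdorff space is a quotient map, that $D(\nu)-N$ is saturated and open, that products with the locally compact factor $I$ preserve quotient maps, and that all nontrivial identifications of $\widetilde{\exp}$ occur inside $S(\nu)$, where $\Phi$ is stationary. This makes the boundary continuity essentially formal, at the price of leaning on the injectivity analysis from Theorem \ref{Thomsp} (which precedes the lemma, so there is no circularity) and on compactness of $D(\nu)$, i.e.\ on $N$ being compact -- consistent with the standing hypotheses of that section. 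Your deduction of the good-pair statement and the equality of path-component counts is fine.
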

\begin{proof}
Consider the map $H:(M-N)\times [0,1]\to M-N$ defined via the normal exponential map 
	\begin{equation*}
		H(q,t) =\left\{\begin{array}{rl}
			\exp_\nu\left[\left\{t \cdot \mathpzc{s}\paran{\frac{\exp_\nu^{-1}(q) }{\norm{\exp_\nu^{-1}(q)}}}+(1-t)\norm{\exp_\nu^{-1}(q)}\right\}\frac{\exp_\nu^{-1}(q)}{\norm{\exp_\nu^{-1}(q)}}\right]  & \text{ if } q\in M - \paran{\cutn\cup N} \\
			q & \text{ if } q\in \cutn.
			\end{array}\right.
	\end{equation*}
If $q\in M - (\cutn\cup N)$, then let $\gamma$ be the unique $N$-geodesic joining $N$ to $q$. The path $H(q,t)$ is the image of this geodesic from $q$ to the first cut point along $\gamma$. The continuity of $\mathpzc{s}$ implies that $H$ is continuous. It also satisfies $H(q,0)=q$ and $H(q,1)\in\cutn$. The claims about good pair and path components are clear.
\end{proof}
\begin{cor}\label{htpy-cut-locus}
If two embeddings $f,g:N\to M$ are ambient isotopic, then $\mathrm{Cu}(f(N))$ and $\mathrm{Cu}(g(N))$ are homotopy equivalent.
\end{cor}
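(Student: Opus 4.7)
The key observation is that although ambient isotopy gives a self-diffeomorphism of $M$ carrying $f(N)$ to $g(N)$, this diffeomorphism need not be an isometry, so it does not directly map $\mathrm{Cu}(f(N))$ onto $\mathrm{Cu}(g(N))$. The natural workaround is to route the argument through the complement $M-N$, using Lemma \ref{defretM-N}.

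The plan is as follows. By definition of ambient isotopy there exists a smooth map $\Phi:M\times[0,1]\to M$ with each $\Phi_t$ a diffeomorphism, $\Phi_0=\mathrm{id}_M$, and $\Phi_1\circ f=g$. In particular the diffeomorphism $\Phi_1:M\to M$ restricts to a homeomorphism of pairs $(M,f(N))\xrightarrow{\cong}(M,g(N))$, and hence restricts further to a homeomorphism $M-f(N)\xrightarrow{\cong}M-g(N)$.

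Next, I would apply Lemma \ref{defretM-N} to the embedded submanifolds $f(N)$ and $g(N)$ separately: in each case the cut locus is a strong deformation retract of the complement of the submanifold. Composing these three homotopy equivalences yields
\begin{displaymath}
\mathrm{Cu}(f(N))\;\simeq\;M-f(N)\;\cong\;M-g(N)\;\simeq\;\mathrm{Cu}(g(N)),
\end{displaymath}
which is the desired conclusion.

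There is no genuine obstacle once Lemma \ref{defretM-N} is available; the only subtlety worth emphasising is the one noted at the outset, namely that $\Phi_1$ generally distorts the metric and so the map $\Phi_1|_{\mathrm{Cu}(f(N))}$ does not land in $\mathrm{Cu}(g(N))$. The homotopy equivalence is therefore not induced by the ambient isotopy itself, but by the composition of deformation retractions with the restricted homeomorphism of complements.
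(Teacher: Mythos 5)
Your proof is correct and follows essentially the same route as the paper: the ambient isotopy yields a diffeomorphism of $M$ carrying $f(N)$ to $g(N)$, hence a homeomorphism $M-f(N)\cong M-g(N)$, and Lemma \ref{defretM-N} converts this into a homotopy equivalence of cut loci. Your explicit caveat that the diffeomorphism need not preserve the metric, and so does not carry one cut locus onto the other, is a worthwhile clarification of a point the paper glosses over.
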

\begin{proof}
The hypothesis implies that there is a diffeomorphism $\varphi: M\to M$ such that $\varphi(f(N))=g(N)$. Thus, $M-\mathrm{Cu}(f(N))$ is homeomorphic to $M-\mathrm{Cu}(g(N))$ and the claim follows from the lemma above. Note that in the smooth category, the notion of isotopic and ambient isotopic are equivalent (refer to \S 8.1 of \cite{Hir94}). Thus, the same conclusion holds if we assume that the embeddings are isotopic.
\end{proof}
\begin{rem}
Without the assumption of $M$ being closed, the above result fails to be true. One may consider $M=S^1\times \R$ with the natural product metric and $N=S^1$. In fact, the universal cover of $M$ is $\R\times \R$ while that of $N$ is $\R$. If we choose a periodic curve in $\R^2$ which is isotopic to the $x$-axis and has non-empty cut locus in $\R^2$, then we may pass via the covering map to obtain an embedding $g$ of $N$ isotopic to the embedding $f$ identifying $N$ with $S^1\times \{0\}$. For this pair, $\mathrm{Cu}(f(N))=\varnothing$ while $\mathrm{Cu}(g(N))\neq \varnothing$.
\end{rem}
Several other identifications between topological invariants can be explored. For instance, if $\iota:N^k\hookrightarrow M^d$ is as before such that $M-N$ is path connected, then 
\begin{equation}\label{isopi}
\iota_\ast:\pi_j(\cutn)\stackrel{\cong}{\longrightarrow}\pi_j(M)
\end{equation}
if $0\leq j\leq d-k-2$ while $\iota_\ast$ is a surjection for $j=d-k-1$. The proof of this relies on a general position argument, i.e., being able to find a homotopy of the sphere that avoids $N$, followed by Lemma \ref{defretM-N}. Surjectivity of $\iota_\ast$ if $j\leq d-k-1$ is imposed by the requirement that a sphere $S^j$ in general position must not intersect $N^k$. Injectivity of $\iota$ for $j\leq d-k-2$ is imposed by the condition that a homotopy $S^j\times [0,1]$ in general position must not intersect $N^k$. This observation \eqref{isopi} generalizes \cite[Proposition 4.5 (1)]{Sak96}.\\
\hf The inclusion $i:\cutn\hookrightarrow M$ induces a long exact sequence in homology
\bgd
\cdots \to H_j(\cutn)\stackrel{i_\ast}{\longrightarrow} H_j(M)\to H_j(M,\cutn)\stackrel{\partial}{\longrightarrow} H_{j-1}(\cutn)\to \cdots
\edd
As $(M,\cutn)$ is a good pair (cf. Lemma \ref{defretM-N}), we replace the relative homology of $(M,\cutn)$ with reduced homology of $M/\cutn\cong \textup{Th}(\nu)$. This results in the following long exact sequence
\begin{equation}\label{lesThom}
\cdots \to H_j(\cutn)\stackrel{i_\ast}{\longrightarrow} H_j(M)\stackrel{q}{\longrightarrow} \widetilde{H}_j(\textup{Th}(\nu))\stackrel{\partial}{\longrightarrow} H_{j-1}(\cutn)\to \cdots
\end{equation}
If $N=\{p\}$ is a point, then $\textup{Th}(\nu)=S^d$ and \eqref{lesThom} imply isomorphisms
\bgd
i_\ast:H_j(\textup{Cu}(p))\stackrel{\cong}{\longrightarrow} H_j(M),\,\,i^\ast:H^j(M)\stackrel{\cong}{\longrightarrow} H^j(\textup{Cu}(p))
\edd
for $j\neq d,d-1$ (cf. \cite[Proposition 4.5 (2)]{Sak96}). 
\begin{rem}
The long exact sequence \eqref{lesThom} can be interpreted as the dual to the long exact sequence in cohomology of the pair $(M,N)$. If $N=N_1\sqcup \cdots\sqcup N_l$ is a disjoint union of submanifolds of dimension $k_1, \ldots, k_l$ respectively, then the Thom isomorphism implies that 
\bgd
\widetilde{H}_j(\textup{Th}(\nu))\cong \widetilde{H}_j(\textup{Th}(\nu_1))\oplus \cdots\oplus \widetilde{H}_j(\textup{Th}(\nu_l))\cong H_{j-(d-k_1)}(N_1)\oplus\cdots\oplus H_{j-(d-k_l)}(N_l),
\edd
where $\nu_j$ is the normal bundle of $N_j$. Applying Poincar\'{e} duality to each $N_j$, we obtain isomorphisms
\bgd
\widetilde{H}_j(\textup{Th}(\nu))\cong \oplus_{i=1}^l H^{d-j}(N_i)= H^{d-j}(N).
\edd
Poincar\'{e}-Lefschetz duality applied to the pair $(M,N)$ provides isomorphisms
\begin{equation}\label{MNcutn}
\check{H}^j(M,N)\cong H_{d-j}(M-N).
\end{equation}
As $M$ and $N$ are triangulable, $\check{\textup{C}}$ech cohomology may be replaced by singular cohomology. Since $M-N$ deforms to $\cutn$ by Lemma \ref{defretM-N}, we have isomorphisms
\begin{equation}\label{MNcutn2}
H^j(M,N)\cong H_{d-j}(\cutn).
\end{equation}
Combining all these isomorphisms, we obtain the long exact sequence in cohomology for $(M,N)$ from \eqref{lesThom}. 
\end{rem}
\begin{lmm}\label{homcutn}
Let $N$ be a closed submanifold of $M$ with $l$ components. If $M$ has dimension $d$, then $H_{d-1}(\cutn)$ is free abelian of rank $l-1$ and $H_{d-j}(\cutn)\cong H^j(M)$ if $j-2\geq k$, where $k$ is the maximum of the dimension of the components of $N$.
\end{lmm}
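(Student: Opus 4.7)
The plan is to convert both statements about $H_\ast(\cutn)$ into statements about cohomology of the pair $(M,N)$ via the Poincar\'{e}--Lefschetz identification $H^j(M,N)\cong H_{d-j}(\cutn)$ recorded in \eqref{MNcutn2}, and then to read off both conclusions from the long exact sequence of the pair in cohomology
\[
\cdots\longrightarrow H^{j-1}(N)\longrightarrow H^j(M,N)\longrightarrow H^j(M)\longrightarrow H^j(N)\longrightarrow\cdots.
\]

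For the second assertion, since each component of $N$ is a closed manifold of dimension at most $k$, one has $H^i(N)=0$ for every $i>k$. When $j\geq k+2$, both $H^{j-1}(N)$ and $H^j(N)$ vanish, so the above sequence pinches down to an isomorphism $H^j(M,N)\cong H^j(M)$. Combined with \eqref{MNcutn2}, this delivers $H_{d-j}(\cutn)\cong H^j(M)$.

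For the first assertion, I would set $j=1$ in the same sequence. Since $M$ is connected and $N$ is non-empty, $H^0(M,N)=0$, and the initial segment reads
\[
0\longrightarrow H^0(M)\longrightarrow H^0(N)\longrightarrow H^1(M,N)\longrightarrow H^1(M)\longrightarrow H^1(N).
\]
Here $H^0(M)\cong\mathbb{Z}$ injects diagonally into $H^0(N)\cong\mathbb{Z}^l$ with cokernel free abelian of rank $l-1$, producing a free subgroup of $H^1(M,N)\cong H_{d-1}(\cutn)$ of that rank. Because $H^1(M)$ is torsion-free by universal coefficients, the extension
\[
0\longrightarrow \mathbb{Z}^{l-1}\longrightarrow H^1(M,N)\longrightarrow \ker\!\big(H^1(M)\to H^1(N)\big)\longrightarrow 0
\]
splits, exhibiting $H_{d-1}(\cutn)$ as free abelian with a distinguished $\mathbb{Z}^{l-1}$ summand.

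The main obstacle I anticipate is two-fold: first, pinning down the duality isomorphism \eqref{MNcutn2} in the stated generality (the paper passes from $\check{\textup{C}}$ech to singular cohomology via triangulability of $(M,N)$, with an implicit orientability assumption fixing the coefficients); second, showing that $\ker(H^1(M)\to H^1(N))$ vanishes in order to match the claimed rank $l-1$ exactly rather than merely bounding it below. I expect this second vanishing to come from a codimension or connectivity hypothesis on the components of $N$ that rules out codimension-one submanifolds, so that classes in $H^1(M)$ restrict non-trivially to loops transverse to $N$.
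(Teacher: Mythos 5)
Your proposal follows essentially the same route as the paper: identify $H_{d-j}(\cutn)$ with $H^j(M,N)$ via \eqref{MNcutn2} and read off both claims from the long exact cohomology sequence of the pair $(M,N)$, using $H^{j-1}(N)=H^j(N)=0$ for $j\geq k+2$ and the segment $H^0(M)\to H^0(N)\to H^1(M,N)\to H^1(M)$ for the degree-one case. The obstacle you flag about the rank is real and is not resolved by the paper either: the printed proof only shows that $H^1(M,N)$ is torsion-free in general, and obtains the rank $l-1$ under the additional hypothesis $H^1(M)=0$ --- precisely the vanishing of $\ker\big(H^1(M)\to H^1(N)\big)$ that you identify; the unconditional rank claim in the statement fails, e.g.\ for a single point in a flat torus, and the paper's own Theorem \ref{cutlocus-surface} gives rank $2g+|N|-1$ for a genus-$g$ surface. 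So your splitting argument recovers exactly what the paper actually proves, and your diagnosis of the missing hypothesis is correct.
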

\begin{proof}
It follows from \eqref{MNcutn} that 
\bgd
H_{d-1}(\cutn)\cong H^1(M,N).
\edd
Consider the long exact sequence associated to the pair $(M,N)$ 
\bgd
0\to H^0(M,N)\to H^0(M)\stackrel{i^\ast}{\rightarrow} H^0(N)\to H^1(M,N)\to H^1(M)\to H^1(N)\to \cdots
\edd
If $N$ has $l$ components, i.e., $N=N_1\sqcup \cdots\sqcup N_l$ where $N_j$ has dimension $k_j$, then $H^1(M,N)$ is torsion-free. This follows from the fact that $i^\ast(1)=(1,\ldots,1)$ and $H^1(M)$ is free abelian. In particular, if $H^1(M)=0$, then $H_{d-1}(\cutn)\cong \mathbb{Z}^{l-1}$. \\
\hf The long exact sequence for the pair $(M,N)$ imply that there are isomorphisms
\begin{equation}\label{cutnM}
H_{d-j}(\cutn)\cong H^j(M,N)\stackrel{\cong}{\longrightarrow} H^j(M)
\end{equation}
if $j\geq k+2$, where $k=\max\{k_1,\ldots,k_l\}$. 
\end{proof}
\begin{rem}
Cut locus can be very hard to compute. For a general space, we have the notion of topological dimension. This notion concides with the usual notion if the space is triangulable. However, Barratt and Milnor \cite{BaMi62} proved that the singular homology of a space may be non-zero beyond its topological dimension. $\check{C}$ech (co)homology is better equipped to detect topological dimension and is the reason why one may prefer it over singular homology due to the generic fractal like nature of cut loci (see the remarks following Theorem C in \S \ref{intro}). Although the topological dimension of $\cutn$ is at most $d-1$, it is not apparent that $H_{d-1}(\cutn)$ is a free abelian group. 
\end{rem}
\noindent \hf There are several applications of this discussion.
\begin{thm}\label{homsph}
Let $N$ be a smooth homology $k$-sphere embedded in a Riemannian manifold homeomorphic to $S^d$. If $d\geq k+3$, then the cut locus $\cutn$ is homotopy equivalent to $S^{d-k-1}$.
\end{thm}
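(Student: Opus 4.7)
The plan is to extract the integral homology of $\cutn$ from the Thom--cut-locus long exact sequence \eqref{lesThom}, then use the $\pi_1$-isomorphism in \eqref{isopi} together with Hurewicz and Whitehead to upgrade the homology calculation to a genuine homotopy equivalence.

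First I would compute $\widetilde{H}_\ast(\mathrm{Th}(\nu))$. Since $N$ is a homology $k$-sphere (hence connected and, for $k\geq 1$, orientable because $H_k(N;\mathbb{Z})\cong\mathbb{Z}$) and $M\cong S^d$ is orientable, the rank-$(d-k)$ normal bundle $\nu$ is orientable, so the Thom isomorphism gives $\widetilde{H}_j(\mathrm{Th}(\nu))\cong H_{j-(d-k)}(N)$, which equals $\mathbb{Z}$ for $j\in\{d-k,d\}$ and $0$ otherwise. Feeding this and $H_\ast(S^d)$ into \eqref{lesThom} splits into three ranges. For $0<j<d-k-1$, both $H_j(M)$ and the adjacent reduced Thom groups vanish, forcing $H_j(\cutn)=0$. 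At $j=d-k-1$ the sequence reduces to $0\to\mathbb{Z}\to H_{d-k-1}(\cutn)\to 0$, giving $H_{d-k-1}(\cutn)\cong\mathbb{Z}$. For $d-k\leq j\leq d-2$, again $\widetilde{H}_{j+1}(\mathrm{Th}(\nu))=0$ and $H_j(M)=0$, so $H_j(\cutn)=0$.

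Next I would handle the two borderline degrees. Since $N$ is connected, Lemma \ref{homcutn} with $l=1$ gives $H_{d-1}(\cutn)=0$. For the top degree, Lemma \ref{defretM-N} identifies $\cutn$ up to homotopy with the open connected $d$-manifold $M-N$, which has the homotopy type of a CW complex of dimension at most $d-1$; hence $H_d(\cutn)=0$. Thus $\cutn$ has the integral homology of $S^{d-k-1}$. The codimension hypothesis $d-k\geq 3$ now enters twice: on the one hand $d-k-2\geq 1$, so \eqref{isopi} produces $\pi_1(\cutn)\cong\pi_1(S^d)=0$, i.e., $\cutn$ is simply connected; on the other hand $d-k-1\geq 2$, so Hurewicz applies and yields $\pi_{d-k-1}(\cutn)\cong H_{d-k-1}(\cutn)\cong\mathbb{Z}$. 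Choosing $f\colon S^{d-k-1}\to\cutn$ representing a generator, $f_\ast$ is an isomorphism on all integral homology groups between simply connected spaces of CW homotopy type, and Whitehead's theorem promotes $f$ to a homotopy equivalence.

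The main obstacle I expect is the bookkeeping at the borderline degrees $j=d-1$ and $j=d$, where the long exact sequence alone does not pin down $H_j(\cutn)$; these are overcome by Lemma \ref{homcutn} (which exploits the connectedness of $N$) and by the classical CW-dimension bound for noncompact manifolds respectively. The remaining inputs--orientability of $\nu$, the Thom isomorphism, and the $\pi_1$/Hurewicz step--are all purely formal consequences of the codimension-$\geq 3$ hypothesis.
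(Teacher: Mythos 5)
Your argument is correct for $k\geq 1$ and reaches the same endgame as the paper (Hurewicz plus Whitehead applied to a generator of $\pi_{d-k-1}$), but it gets the homology input by a genuinely different, and in some ways cleaner, route: you read off $H_*(\cutn)$ directly from the Thom-space long exact sequence \eqref{lesThom} together with the Thom isomorphism $\widetilde{H}_j(\mathrm{Th}(\nu))\cong H_{j-(d-k)}(N)$, whereas the paper first establishes that $M-N$ is $(d-k-2)$-connected via the general-position statement \eqref{isopi} and then computes $H_*(M-N)$ by Poincar\'e--Lefschetz duality \eqref{MNcutn}. The two computations are formally dual (the paper's own remark after \eqref{lesThom} notes that the Thom sequence is the dual of the cohomology sequence of the pair $(M,N)$), and both need Lemma \ref{homcutn} at degree $d-1$ and the noncompactness of $M-N$ at degree $d$; your version has the small advantage of isolating exactly where each borderline degree is settled, while the paper's version gets simple connectivity and the low-degree homology vanishing in one stroke from $(d-k-2)$-connectedness.

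There is one genuine slip: the case $k=0$. A homology $0$-sphere is $S^0$, i.e.\ two points, so your parenthetical ``hence connected'' fails there, and applying Lemma \ref{homcutn} with $l=1$ gives $H_{d-1}(\cutn)=0$, which contradicts the desired conclusion $\cutn\simeq S^{d-1}$ in that case. The fix is exactly what the paper does: treat $k=0$ separately with $l=2$ in Lemma \ref{homcutn}, obtaining $H_{d-1}(\cutn)\cong\mathbb{Z}$ (equivalently, note that $\mathrm{Th}(\nu)\cong S^d\vee S^d$ so the sequence at $j=d$ reads $0\to\mathbb{Z}\to\mathbb{Z}^2\to H_{d-1}(\cutn)\to 0$ with free cokernel). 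With that one adjustment your proof is complete.
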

\begin{proof}
As $N$ has codimension at least $3$, its complement is path-connected. It follows from \eqref{isopi} and Lemma \ref{defretM-N} that $M-N$ is $(d-k-2)$-connected. In particular, $M-N$ is simply-connected and by Hurewicz isomorphism, $H_j(M-N)=0$ if $j\leq d-k-2$.  Note that $H_d(M-N)=0$ as $M-N$ is a non-compact manifold of dimension $d$. \\
\hf If $k>0$, then by Lemma \ref{homcutn}, $H_{d-1}(M-N)=0$. Moreover, by Poincar\'{e}-Lefschetz duality \eqref{MNcutn}, we infer that 
the only non-zero higher homology of $M-N$ is $H_{d-k-1}(M-N)\cong\mathbb{Z}$. By Hurewicz Theorem there is an isomorphism, $\pi_{d-k-1}(M-N)\cong\mathbb{Z}$. Let 
\bgd
\alpha:S^{d-k-1}\to M - N
\edd
be a generator. The map $\alpha_\ast$ induces an isomorphism on all homology groups between two simply-connected CW complexes. It follows from Whitehead's Theorem that $\alpha$ is a homotopy equivalence. Using Lemma \ref{defretM-N}, we obtain our homotopy equivalence $H_1\comp \alpha:S^{d-k-1}\to \cutn$.\\
\hf If $k=0$, then by Lemma \ref{homcutn}, $H_{d-1}(M-N)\cong \mathbb{Z}$. Arguments similar to the $k>0$ case now applies to obtain a homotopy equivalence with $S^{d-1}$.
\end{proof}
The above result is foreshadowed by example \ref{join} where we showed that the cut locus of $N=\mathbb{S}^k_i$ inside $M=\mathbb{S}^d$ is $\mathbb{S}^{d-k-1}_l$. It also differs from Poincar\'{e}-Lefschetz duality in that we are able to detect the exact homotopy type of the cut locus. In fact, when $M$ and $N$ are real analytic and the embedding is also real analytic, then by Theorem \ref{Buchner} we infer that $\cutn$ is a simplicial complex of dimension at most $d-1$. Towards this direction, Theorem \ref{homsph} can be pushed further.
\begin{prpn}\label{homsph2}
Let $N$ be a real analytic homology $k$-sphere embedded in a real analytic homology $d$-sphere $M$. If $d\geq k+3$, then the cut locus $\cutn$ is a simplicial complex of dimension at most $(d-1)$, having the homology of $(d-k-1)$-sphere with fundamental group isomorphic to that of $M$.
\end{prpn}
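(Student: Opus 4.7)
The plan is to establish the three assertions---simpliciality, identification of the fundamental group, and the homology computation---by combining Theorem \ref{Buchner}, the map \eqref{isopi} together with Lemma \ref{defretM-N}, Lemma \ref{homcutn}, and the long exact sequence \eqref{lesThom}. Triangulability is immediate: since $N \hookrightarrow M$ is a real analytic embedding of real analytic manifolds of dimension $d$, Theorem \ref{Buchner} gives that $\cutn$ is a simplicial complex of dimension at most $d-1$.

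For the fundamental group, the codimension assumption $d - k \geq 3$ implies that $M - N$ is path-connected, and \eqref{isopi} combined with Lemma \ref{defretM-N} yields that $\iota_{*}\colon \pi_j(\cutn) \to \pi_j(M)$ is an isomorphism for $0 \leq j \leq d-k-2$. Since $d \geq k+3$ places $j = 1$ inside this range, we conclude $\pi_1(\cutn) \cong \pi_1(M)$, and also that $\cutn$ is connected (so $H_0(\cutn) = \mathbb{Z}$).

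For the homology, since $M$ and $N$ are both closed orientable manifolds---being homology spheres, they carry fundamental classes---the normal bundle $\nu$ is orientable of rank $d - k$, and the Thom isomorphism gives $\widetilde{H}_j(\mathrm{Th}(\nu)) \cong H_{j - (d-k)}(N)$. This vanishes except at $j = d-k$ (value $H_0(N)$) and $j = d$ (value $H_k(N) = \mathbb{Z}$ when $k \geq 1$). Plugging $H_{\ast}(M)$ and $\widetilde{H}_{\ast}(\mathrm{Th}(\nu))$ into \eqref{lesThom} forces $H_j(\cutn) = 0$ for all $j$ outside $\{0, d-k-1, d-1, d\}$. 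The simplicial dimension bound gives $H_d(\cutn) = 0$; Lemma \ref{homcutn} gives $H_{d-1}(\cutn) \cong \mathbb{Z}^{l-1}$, where $l$ is the number of components of $N$, which matches $H_{d-1}(S^{d-k-1})$ in both the $k \geq 1$ case (where $l = 1$ and both groups vanish) and the $k = 0$ case (where $l = 2$ and both equal $\mathbb{Z}$); and a direct reading of \eqref{lesThom} at $j = d-k-1$ extracts a short exact sequence
\[
0 \to \widetilde{H}_{d-k}(\mathrm{Th}(\nu)) \to H_{d-k-1}(\cutn) \to 0,
\]
yielding $H_{d-k-1}(\cutn) \cong \mathbb{Z}$. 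Together these match the homology of $S^{d-k-1}$.

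The main conceptual obstacle is not the homology computation itself but the fact that, in contrast with Theorem \ref{homsph}, one cannot upgrade this homology match to a homotopy equivalence: without simple-connectivity of $M$ (and hence of $\cutn$), Whitehead's theorem no longer lets us promote a map $S^{d-k-1} \to \cutn$ representing a generator of $H_{d-k-1}$ to a homotopy equivalence. This is precisely why the proposition settles for homology plus the identification of $\pi_1$ rather than asserting the full homotopy type.
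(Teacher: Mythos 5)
Your proof is correct and follows exactly the route the paper intends: the paper's own ``proof'' of Proposition \ref{homsph2} is the single remark that one combines the ingredients of the proof of Theorem \ref{homsph} (the sequence \eqref{lesThom}, Lemma \ref{homcutn}, the isomorphisms \eqref{isopi} via Lemma \ref{defretM-N}) with Theorem \ref{Buchner}, which is precisely what you have written out. The only nitpick is that your closing short exact sequence $0\to\widetilde{H}_{d-k}(\textup{Th}(\nu))\to H_{d-k-1}(\cutn)\to 0$ tacitly uses $H_{d-k}(M)=0$ and hence $k\geq 1$; for $k=0$ the term $H_d(M)\cong\mathbb{Z}$ intervenes and $\widetilde{H}_d(\textup{Th}(\nu))\cong\mathbb{Z}^2$, but since $d-k-1=d-1$ in that case, your Lemma \ref{homcutn} computation already gives the correct answer in that degree.
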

\noindent The proof of this is a combination of ideas used in the proof of Theorem \ref{homsph}. The homotopy type cannot be deduced here due to the presence of a non-trivial fundamental group. An intriguing example can be obtained by combining Proposition \ref{homsph2} and Poincar\'{e} homology sphere.
\begin{eg}[Cut locus of $0$-sphere in Poincar\'{e} sphere]\label{eg: Poincare}
Let $\tilde{I}$ be the binary icosahedral group. It is a double cover of $I$, the icosahedral group, and can be realized a subgroup of $SU(2)$. It is known that $H_1(\tilde{I};\mathbb{Z})=H_1(\tilde{I};\mathbb{Z})=0$, i.e., it is perfect and the second homology of the classifying space $B\tilde{I}$ is zero. A presentation of $\tilde{I}$ is given by
\bgd
\tilde{I}=\langle s,t\,|\,(st)^2=s^3=t^5\rangle.
\edd
In fact, if we construct a cell complex $X$ of dimension $2$ using the presentation above, then $X$ has one $0$-cell, two $1$-cells and two $2$-cells. The cellular chain complex, as computed from the presentation, is given by
\begin{equation*}
\xymatrix@R+1pc@C+2pc{
0\ar[r] & \mathbb{Z}^2\ar[r]^{\spmat{-1 & 2 \\ 3 & -5}} & \mathbb{Z}^2\ar[r]^-{0} & \mathbb{Z}\ar[r] & 0
}
\end{equation*}
Therefore, $H_1(X)=H_2(X)=0$ while $\pi_1(X)=\tilde{I}$. \\
\hf In contrast, consider the cut locus $C$ of the $0$-sphere in $SU(2)/\tilde{I}$, the Poincar\'{e} homology sphere. As $SU(2)$ is real analytic, so is the homology sphere. By Proposition \ref{homsph2}, $C$ is a finite, connected simplicial complex of dimension $2$ such that $\pi_1(C)\cong \tilde{I}$ and $H_\bullet(C;\mathbb{Z})\cong H_\bullet(S^2;\mathbb{Z})$. The existence of this space is interesting for the followng reason: although $X\vee S^2$ has the same topological invariants we are unable to determine whether $X\vee S^2$ is homotopy equivalent to $C$.
\end{eg}
\hf In the codimension two case, we have two results. 
\begin{thm}\label{cutlocus-surface}
Let $\Sigma$ be a closed, orientable, real analytic surface of genus $g$ and $N$ a non-empty, finite subset. Then $\cutn$ is a connected graph, homotopy equivalent to a wedge product of $|N|+2g-1$ circles.
\end{thm}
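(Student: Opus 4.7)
The plan is to assemble three ingredients already available in the paper. First, since $\Sigma$ is real analytic, a finite set $N$ of points is trivially a real analytic submanifold of dimension $0$, so Theorem~\ref{Buchner} applies to the pair $(\Sigma, N)$: the cut locus $\cutn$ is a simplicial complex of dimension at most $d-1 = 1$, hence a graph. Second, by Lemma~\ref{defretM-N}, $\cutn$ is a strong deformation retract of $\Sigma - N$, so proving the statement reduces to determining the homotopy type of the punctured surface $\Sigma - N$.

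For the third step, I will invoke the classical fact that $\Sigma - N$ is homotopy equivalent to a wedge of $2g+|N|-1$ circles. I would present this in two complementary ways so the reader can choose. One approach is explicit: start from the standard CW decomposition of $\Sigma$ with one $0$-cell, $2g$ $1$-cells, and a single $2$-cell; placing the points of $N$ in the interior of the $2$-cell, the punctured $2$-cell deformation retracts onto the $1$-skeleton together with $|N|-1$ extra loops (one new loop per additional puncture). Alternatively, since $\Sigma - N$ is an open connected surface, it is homotopy equivalent to a finite connected graph, and its Euler characteristic is
\begin{equation*}
\chi(\Sigma - N) \;=\; \chi(\Sigma) - |N| \;=\; 2 - 2g - |N|.
\end{equation*}
For a connected graph with first Betti number $\beta$ the Euler characteristic is $1-\beta$, which forces $\beta = |N|+2g-1$, and a connected graph of first Betti number $\beta$ is homotopy equivalent to $\bigvee_{\beta} S^1$.

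Connectedness of $\cutn$ follows along with the identification: removing finitely many points from a connected $2$-manifold leaves a path-connected space, hence $\cutn \simeq \Sigma - N$ is connected (equivalently, apply Lemma~\ref{defretM-N} to read off that $\cutn$ and $\Sigma - N$ have the same number of path components).

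There is no substantial obstacle here; the work is genuinely done by Theorem~\ref{Buchner} (which supplies the simplicial structure and dimension bound) together with Lemma~\ref{defretM-N} (which transfers the problem to the topologically elementary space $\Sigma - N$). The only subtlety worth flagging is the degenerate case $g=0$, $|N|=1$, where the formula predicts a wedge of $0$ circles; this is consistent since the cut locus of a point in the round sphere $S^2$ is the antipode, and $\Sigma - N \cong \R^2$ is contractible.
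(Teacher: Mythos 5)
Your proof is correct, and it reaches the conclusion by a genuinely different route than the paper. You and the paper share the same two structural ingredients: Theorem~\ref{Buchner} (which makes $\cutn$ a finite $1$-dimensional simplicial complex, hence a finite graph) and Lemma~\ref{defretM-N} (which the paper uses only to get connectedness of $\cutn$ from connectedness of $\Sigma-N$). Where you diverge is in computing the first Betti number: you exploit the full strength of Lemma~\ref{defretM-N}, namely that $\cutn$ is a deformation retract of $\Sigma-N$, and then quote the classical homotopy type of a finitely punctured closed surface (via the CW decomposition or the Euler characteristic count $\chi(\Sigma-N)=2-2g-|N|=1-\beta$). The paper instead runs the long exact sequence \eqref{lesThom} coming from the Thom-space homeomorphism of Theorem~\ref{Thomsp}, using $\mathrm{Th}(\nu)\cong\vee_{|N|}S^2$ from \eqref{Thomwedge} to extract $\dim_{\mathbb Z}H_1(\cutn)=2g+|N|-1$, and only then collapses a maximal tree. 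Your argument is more elementary and even yields the homotopy equivalence $\cutn\simeq\vee S^1$ directly from the retraction, without passing through homology; the paper's argument is chosen to showcase the Thom-space exact sequence, which is the tool that continues to work in higher dimensions where the homotopy type of $M-N$ is not classically known. One small point of care: in your Euler-characteristic variant, the fact that $\Sigma-N$ is homotopy equivalent to a \emph{finite} connected graph is most cleanly justified in this setting by the very combination Lemma~\ref{defretM-N} $+$ Theorem~\ref{Buchner} you already have on hand (or by your explicit CW argument), so there is no circularity. Your remark on the degenerate case $g=0$, $|N|=1$ is consistent with Myers' theorem that the cut locus of a point on an analytic $S^2$ is a contractible finite tree.
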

\begin{proof}
As $M-N$ is connected, Lemma \ref{defretM-N} implies that $\cutn$ is connected. It follows from Theorem \ref{Buchner} that $\cutn$ is a finite $1$-dimensional simplicial complex, i.e., a finite graph. In this case, $\textup{Th}(\nu)$ is a wedge product of $|N|$ copies of $S^2$ (cf. \eqref{Thomwedge}). We consider \eqref{lesThom} with $j=2$:
\bgd
0\stackrel{i_\ast}{\longrightarrow} \mathbb{Z}\stackrel{q}{\longrightarrow} \widetilde{H}_2(\vee_{|N|} S^2)\stackrel{\partial}{\longrightarrow} H_{1}(\cutn)\stackrel{i_\ast}{\longrightarrow} H_{1}(\Sigma)\to 0
\edd
Note that $H_{d-1}(M)$ is torsion-free, whence all the groups appearing in the long exact sequence are free abelian groups. This implies that 
\begin{equation*}\label{bettid-1}
\dim_\mathbb{Z} H_{1}(\cutn)=2g+|N|-1.
\end{equation*}
As $\cutn$ is connected finite graph, collapsing a maximal tree $T$ results in a quotient space $\cutn/T$ which is homotopic to $\cutn$ as well being a wedge product of $|N|+2g-1$ circles.
\end{proof}
\begin{rem}
Itoh and V\^{i}lcu \cite{ItVi15} proved that every finite, connected graph can be realized as the cut locus (of a point) of some surface. There remains the question of orientability of the surface. As noted in the proof of Theorem \ref{cutlocus-surface}, if the surface is orientable and $|N|=1$, then the graph has an even number of generating cycles. If $\Sigma$ is non-orientable, then $\Sigma\cong (\mathbb{RP}^2)^{\# k}$ has non-orientable genus $k$ and the oriented double cover of $\Sigma$ has genus $g=k-1$. Recall that $H_1(\Sigma)\cong \mathbb{Z}^{k-1}\oplus\mathbb{Z}_2$ and $H_2(\Sigma)=0$. Looking at \eqref{lesThom} with $j=2$ we obtain
\bgd
0\to \mathbb{Z}\to H_1(\cu)\to \mathbb{Z}^{k-1}\oplus\mathbb{Z}_2\to 0.
\edd
Thus, $H_1(\cu)\cong\mathbb{Z}^k$ as homology of graphs are free abelian. Let $B_\ep(\cu)$ denote the $\ep$-neighbourhood of $\cu$ in $\Sigma$. For $\ep$ sufficiently small, this is a surface such that $\overline{B_\ep(\cu)}$ has one boundary component. The compact surface $B_\ep(\cu)$ is reminiscent of ribbon graphs. The surface $\Sigma$ can be obtained as the connect sum of a disk centered at $p$ and the closure of $B_\ep(\cu)$. Therefore, non-orientability of $\Sigma$ is equivalent to non-orientability of $B_\ep(\cu)$. A similar observation appears in the unpublished \cite[Theorem 3.7]{ItVi11}.
\end{rem}
\begin{eg}[Homology spheres of codimension two]\label{codim2}
In continuation of Theorem \ref{homsph}, let $N\hookrightarrow S^{k+2}$ be a homology sphere of dimension $k\geq 1$. Since $N$ has codimension two, $S^{k+2}-N$ is path connected and so is $\cutn$. We are not assuming that the metric on $S^{k+2}$ is real analytic. Using \eqref{MNcutn2} and the long exact sequence in cohomology of $(M,N)$, we infer that $H_1(\cutn)\cong\mathbb{Z}$ and all higher homology groups vanish. However, the Hurewicz Theorem cannot be used here to establish that $\pi_1(\cutn)\cong\mathbb{Z}$. \\
\hf In particular cases, we may conclude that $\cutn$ is homotopic to a circle. It was proved by Plotnick \cite{Plo82} that certain homology $3$-spheres $N$, obtained by a Dehn surgery of type $\frac{1}{2a}$ on a knot, smoothly embed in $S^5$ with complement a homotopy circle. Since $M-N$ deforms to $\cutn$, it follows that there is a map $\alpha:S^1\to \cutn$ inducing isomorphisms on homotopy and homology groups.\\
\hf If $k=1$, then a homology $1$-sphere is just a knot $K$ in $S^3$. Since $S^3-K$ deforms to $\mathrm{Cu}(K)$, the fundamental group of the cut locus is the knot group. Moreover, in the case of real analytic knots in $\mathbb{S}^3$, the cut locus is a finite simplicial complex of dimension at most $2$ (cf. Theorem \ref{Buchner}). Except for the unknot, the knot group is never a free group while the fundamental group of a connected, finite graph is free. This observation establishes that $\mathrm{Cu}(K)$ is always a $2$-dimensional simplicial complex, whenever $K$ is a non-trivial (real analytic) knot in $\mathbb{S}^3$.
\end{eg}


\subsection{Morse-Bott function associated to distance function}\label{Sec: Morse-Bott}

\hf We first prove that the closure of $\sen$ is the cut locus, closely following the proof given in \cite{Wol79} for the case of a point.
\begin{thm}\label{thm: Theorem 2}
Let $\cutn$ be the cut locus of a compact submanifold $N$  of a  complete Riemannian manifold $M$. The subset $\sen$ of $\cutn$ is dense in $\cutn.$
\end{thm}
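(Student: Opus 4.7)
The statement asserts $\sen$ is dense in $\cutn$, i.e., $\cutn\subseteq\overline{\sen}$ (the reverse containment $\sen\subseteq\cutn$ was already noted in the discussion after Lemma~\ref{Lmm: singdsq}). The plan follows Wolter's argument for the point case \cite{Wol79}, with $N$-geodesics replacing geodesics from a point throughout.

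Fix $q\in\cutn$. If $q\in\sen$ there is nothing to prove, so assume $q\in\cutn\setminus\sen$, which forces $q$ to admit a unique distance-minimal $N$-geodesic $\gamma\colon[0,l]\to M$ with $l=d(q,N)$. Using completeness, extend $\gamma$ to $[0,l+\delta]$ for some $\delta>0$, and set $v_0:=(\gamma(0),l\gamma'(0))\in\nu$. For $\epsilon\in(0,\delta)$ put $q_\epsilon:=\gamma(l+\epsilon)$; by definition of $\cutn$ the extension of $\gamma$ past $q$ is not a distance-minimal $N$-geodesic, hence $L_\epsilon:=d(q_\epsilon,N)<l+\epsilon$, and there exists a distance-minimal $N$-geodesic $\sigma_\epsilon$ of length $L_\epsilon$ joining $N$ to $q_\epsilon$, distinct from $\gamma|_{[0,l+\epsilon]}$.

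The first main step is to show that $q$ is a focal point of $N$ along $\gamma$, i.e., $\exp_\nu$ is singular at $v_0$. Compactness of $N$ together with the uniform bound $L_\epsilon\le l+\epsilon$ lets one extract from any subsequence of $\sigma_\epsilon$ a further subsequence converging in $C^1$ to a distance-minimal $N$-geodesic to $q$ of length $l$; uniqueness of $\gamma$ then forces the whole family to satisfy $\sigma_\epsilon\to\gamma$, and in particular $(\sigma_\epsilon(0),L_\epsilon\sigma_\epsilon'(0))\to v_0$ in $\nu$. If $\exp_\nu$ were a local diffeomorphism at $v_0$, then on some injectivity neighbourhood $W\ni v_0$ the two sequences $(\gamma(0),(l+\epsilon)\gamma'(0))$ and $(\sigma_\epsilon(0),L_\epsilon\sigma_\epsilon'(0))$---distinct because their norms $l+\epsilon$ and $L_\epsilon$ differ---would both sit inside $W$ for small $\epsilon$ yet map to the same point $q_\epsilon$, contradicting injectivity. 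Hence $\exp_\nu$ is singular at $v_0$.

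The final step uses this singularity to manufacture a sequence $q_n\to q$ with $q_n\in\sen$. Non-injectivity of $\exp_\nu$ in every neighbourhood of $v_0$ supplies pairs $v_n\neq w_n\to v_0$ with $\exp_\nu(v_n)=\exp_\nu(w_n)=:q_n\to q$, yielding two $N$-geodesics from $N$ to $q_n$ of lengths $\|v_n\|,\|w_n\|\to l$. To land in $\sen$ one needs $\|v_n\|=\|w_n\|=d(q_n,N)$: minimality of any such equal-norm pair follows by the same reasoning as in the first step, since a strictly shorter competing $N$-geodesic to $q_n$ would itself converge to $\gamma$ and thereby produce a third preimage near $v_0$ incompatible with the local two-to-one structure forced by the fold; equality of the two norms on a subset accumulating at $q$ follows from an intermediate-value argument applied to the continuous function $r\mapsto\|v_1(r)\|-\|v_2(r)\|$ on the two-to-one region, using that both branches tend to $l$ at $q$. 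The principal obstacle is exactly this last equal-norm/continuity analysis, especially when the singularity at $v_0$ is more degenerate than a Whitney fold; once granted, the remainder of the argument is a transcription of Wolter's point-source proof, suitably enlarged to accommodate the submanifold and its normal exponential map in place of the single exponential $\exp_p$.
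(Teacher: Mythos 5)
Your first step (that $q\in\cutn\setminus\sen$ forces $\exp_\nu$ to be singular at $v_0=l\gamma'(0)$) is sound and mirrors the characterization in Lemma~\ref{chsnu}. The problem is the second step. You pass from ``$D\exp_\nu$ is singular at $v_0$'' to ``$\exp_\nu$ is non-injective in every neighbourhood of $v_0$.'' That implication is false for general smooth maps (consider $x\mapsto x^3$), and for exponential-type maps it is a genuinely hard theorem (Morse--Littauer in the analytic case, Warner's work under regularity hypotheses on the conjugate/focal locus); nothing in the paper's toolkit supplies it, and your proof does not. On top of that, even granting nearby pairs $v_n\neq w_n$ with equal images, you still need both segments to be distance-minimal with $\|v_n\|=\|w_n\|=d(q_n,N)$ in order to land in $\sen$; you flag this yourself as ``the principal obstacle'' and offer only a sketch that presupposes a fold-type local structure. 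So the proof as written has a genuine gap at its crux.

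The paper's argument is structured precisely to avoid both difficulties. It never invokes focal points. Instead it forms the rescaled cone $\mathrm{Co}^\star(x_0,\delta)=\{\mathpzc{s}(x/\|x\|)\,x : x\in\mathrm{Co}(x_0,\delta)\}$, using the continuity of $\mathpzc{s}$ (Proposition~\ref{snucts}), so that every ray in $\mathrm{Co}^\star$ parametrizes an $N$-geodesic by construction; hence any failure of injectivity of $\exp_\nu$ on $\mathrm{Co}^\star(x_0,\tfrac1n)$ \emph{automatically} produces two distinct distance-minimal $N$-geodesics to a common point $q_n\to q$, i.e.\ $q_n\in\sen$, with no equal-norm analysis needed. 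In the complementary case, $\exp_\nu|_{\mathrm{Co}^\star}$ is a homeomorphism onto an embedded $n$-ball with $q$ on its boundary, so every ball $B(q,\tfrac1n)$ meets the complement of the image; applying Busemann's limit-curve theorem to $N$-geodesics $\gamma_n$ reaching those exterior points yields a second $N$-geodesic to $q$ with initial direction outside $\mathrm{Co}$, contradicting $q\notin\sen$. To repair your proof you would either have to establish the Morse--Littauer-type non-injectivity statement for $\exp_\nu$ at focal points together with the equal-norm selection, or abandon the focal-point route in favour of the cone dichotomy.
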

\begin{proof}
Let $q\in \cutn $ but not in \sen. Choose an $N$-geodesic $\gamma$, joining $N$ to $q$, such that any extension of $\gamma$ is not an $N$-geodesic. This geodesic $\gamma$ is unique as $q\notin \sen$. We may write $\gamma(t) = \exp_\nu(tx)$, where $\gamma(0)=p\in N$ and $\gamma'(0)=x_0\in S(\nu_p)$. It follows from the definition of $\mathpzc{s}$ that $q=\exp_\nu\paran{\mathpzc{s}(x_0)x_0}$. We need to show that every neighborhood of $q$ in $\cutn$ must intersect $\sen$. Suppose it is false. Let $\delta>0$ and consider $\Ball$, the closed ball with center $x_0$ and radius $\delta$. Define the cone
\begin{equation*}
\co\! := \curlybracket{tx:0\le t\le 1,~ x\in \Ball\cap S(\nu)}.
\end{equation*}
Since $\Ball\cap S(\nu)$ is homeomorphic to a closed $(n-1)$ ball for sufficiently small $\delta$, the cone will be homeomorphic to a closed Euclidean $n$-ball.
\begin{figure}[H] 
	\centering 
	\includegraphics[angle=270, scale=0.8]{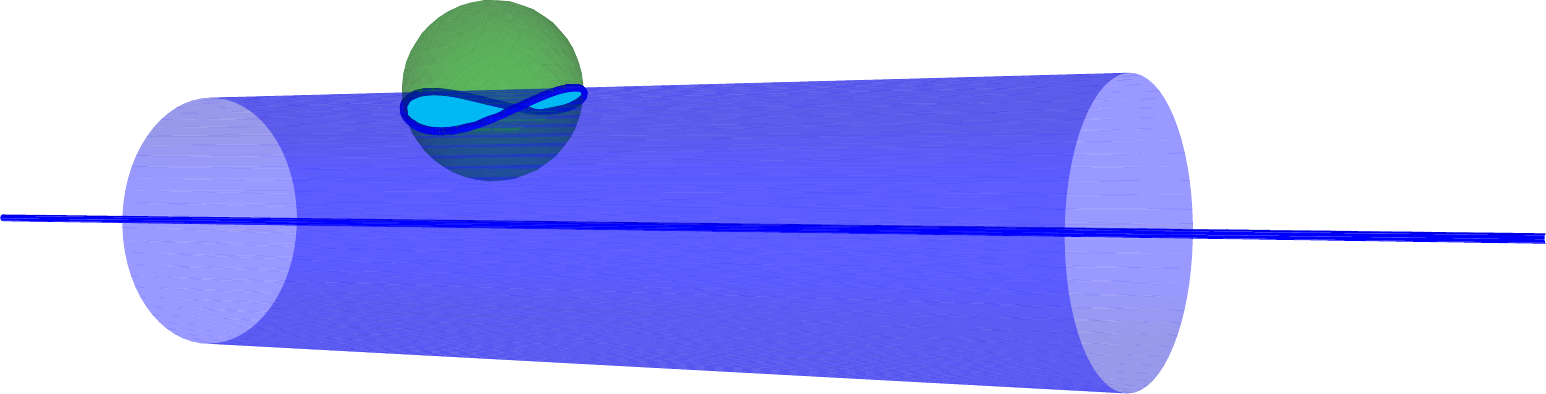} 
	\caption{\co} 
	\label{fig: cone} 
\end{figure}
Similarly, define another cone
\begin{equation*}
\costar\! := \big\{\mathpzc{s}\paran{ x/\norm{x}}x\,|\, x\in \co\!, ~x\neq 0\big\}\cup \{0\}.
\end{equation*} 
Note that $\mathpzc{s}(x_0)$ is finite. As $\mathpzc{s}$ is continuous, due to Proposition \ref{snucts}, for sufficiently small $\delta$ the term $\mathpzc{s}\paran{x/\norm{x}}$ is still finite, whence \costar is well defined. We claim that \costar is also homeomorphic to a closed Euclidean $(n-k)$-ball. Indeed,  a non-zero $x\in \co\!$ implies $x=\lambda \hat{x}$, for some $\lambda \in (0,1]$ and $\hat{x}\in \Ball\cap S(\nu)$. Since $\mathpzc{s}(\hat{x})x = \lambda \mathpzc{s}(\hat{x})\hat{x}$, it follows that \costar is the cone of the set
\bgd
\{\mathpzc{s}(\hat{x})\hat{x}\,|\,\hat{x}\in \Ball\cap S(\nu)\},
\edd
which is homeomorphic to $\Ball\cap S(\nu)$.
Now we have a dichotomy: \\
\hf (a) For a fixed small $\delta>0$, the restriction of $\exp_\nu$ to \costar is a homeomorphism to its image because it is injective, or\\
\hf (b) For any $\delta>0$, the restriction of $\exp_\nu$ to \costar is not injective. \\ 
If (b) holds, choose $v_n\neq w_n\in \mathrm{Co}^\star(x_0,\frac{1}{n})$ such that these map to $q_n$ under $\exp_\nu$. Thus,  $q_n\in \sen$ and compactness of $S(\nu)$ ensures that $q_n$ converges to $q$. If (a) holds, then let $B(q,\varepsilon)$ denote the open ball in $M$ centered at $q$ with radius $\varepsilon>0$. We claim that it intersects the complement of $\exp_\nu(\costar\!\!)$ in $M$. But it is true as $\mathpzc{s}(x_0)x_0$ lies on the boundary of \costar and hence it has a neighborhood in \costar which is homeomorphic to a closed $n$-dimensional Euclidean half plane. Since $\exp_{\nu}$ restricted to \costar was a homeomorphism, the open ball $B(q,\varepsilon)$ must intersect the points outside the image of $\exp_{\nu}(\costar\!\!)$. \\
\hf Now take $\varepsilon = \frac{1}{n}$. For each $n$, there exists $q_{n}\in B\paran{q,\frac{1}{n}}$ such that $q_{n}\notin \exp_{\nu}(\costar\!\!)$. Since $M$ is complete, for each point $q_{n}$ let $\gamma_{n}$ be a $N$-geodesics joining $p_n\in N$ to $q_{n}$. We may invoke the following result from Buseman's book \cite[Theorem 5.16, p. 24]{Bus55}. Let $\curlybracket{\gamma_{n}}$ be a sequence of rectifiable curves in a finitely compact set $X$ and the lengths $\ell(\gamma_n)$ are bounded. If the initial points $p_{n}$ of $\gamma_{n}$ forms a bounded set, then $\curlybracket{\gamma_n}$ contains a subsequence $\gamma_{n_k}$ which converges uniformly to a rectifiable curve $\tilde{\gamma}$ in $X$ and 
\begin{equation*}\label{Fact A eqn: 1} 
	\ell(\tilde{\gamma}) \le \lim\inf \ell\paran{\gamma_{n_k}}
\end{equation*}
Since $\{p_n\}$ lie in the compact set $N$, we obtain a rectifiable curve $\tilde{\gamma}$ such that 
\bgd
\ell(\tilde{\gamma})\leq  \lim\inf \ell\paran{\gamma_{n_k}}=\lim_k \ell(\gamma_{n_k})=\lim_k d(q_{n_k},N)=d(q,N).
\edd
Thus, $\tilde{\gamma}$ is actually an $N$-geodesic joining $p'=\lim_k p_{n_k}$ to $q$  and the unit tangent vectors $x_{n_k}=\gamma_{n_k}'(0)$ at $p_{n_k}$ converges to the unit tangent vector $\tilde{x}=\tilde{\gamma}'(0)$ at $p'$.  Since $x_{0}$ is an interior point of the set $\Ball\cap S(\nu)$, any sequence in $S(\nu)$ converging to $x_{0}$ must eventually lie in \co\!. According to our choice, $q_{n_k}\notin \exp_{\nu}(\costar\!\!)$ and $x_{n_k}$ all lie outside of \co\!. Hence $x_{0}\neq \tilde{x}$ and $\gamma \neq \tilde{\gamma}$. Thus, there are two distinct $N$-geodesics $\gamma$ and $\tilde{\gamma}$ joining $N$ to $q$, a contradiction to $q\notin \sen$. This completes the proof.
\end{proof}
We have seen (cf. Lemma \ref{Lmm: singdsq}) that $d^2$ is smooth away from the cut locus. It follows from Theorem \ref{thm: Theorem 2} that the cut locus is the closure of the singularity of $d^2$. The following example suggests that $d^2$ can be differentiable at points in $\cutn -\sen$ but not twice differentiable.
\begin{eg}[Cut locus of an ellipse]
We discuss the regularity of the distance squared function from an ellipse $x^2/a^2+y^2/b^2=1$ (with $a^>b>0$) in $\R^2$. For a discussion of the cut locus for ellipses inside $\mathbb{S}^2$ and ellipsoids, refer to \cite[pp. 90-91]{Heb95}. Let $(x_0,y_0)$ be a point inside the ellipse lying in the first qudarant. The point closest to $(x_0,y_0)$ and lying on the ellipse is given by 
\bgd
x=\frac{a^2x_0}{t+a^2},\,\,y=\frac{b^2y_0}{t+b^2},
\edd
where $t$ is the unique root of the quartic
\bgd
\left(\frac{ax_0}{t+a^2}\right)^2+\left(\frac{by_0}{t+b^2}\right)^2=1
\edd
in the interval $(-b^2,\infty)$. Given $(\alpha,\beta)$ with $\beta>0$, we set $P_\ep(\alpha,\beta)=(\frac{a^2-b^2}{a}+\ep\alpha,\ep\beta)$; this defines a straight line passing through $P_0(\alpha,\beta)$ in the direction of $(\alpha,\beta)$. For $\ep>0$, $P_\ep(\alpha,\beta)$ lies in the first quadrant and we denote by $t=t(\ep)$ be the unique relevant root of the quartic
\bgd
\left(\frac{a(\frac{a^2-b^2}{a}+\ep\alpha)}{t+a^2}\right)^2+\left(\frac{b\ep\beta}{t+b^2}\right)^2=1.
\edd
Simpifying this after dividing by $\ep$ and taking a limit $\ep\to 0^+$, we obtain
\bgd
\frac{2a\alpha}{a^2-b^2}=\lim_{\ep\to 0^+}\left(\Big(\frac{2}{a^2-b^2}\Big)\frac{t+b^2}{\ep}-b^2\beta^2 \frac{\ep}{(t+b^2)^2}\right).
\edd
On the other hand, the point $Q_\ep(\alpha,\beta)$ on the ellipse closest to $P_\ep(\alpha,\beta)$ is given by
\begin{figure}[H]
\centering
\includegraphics[scale=0.7]{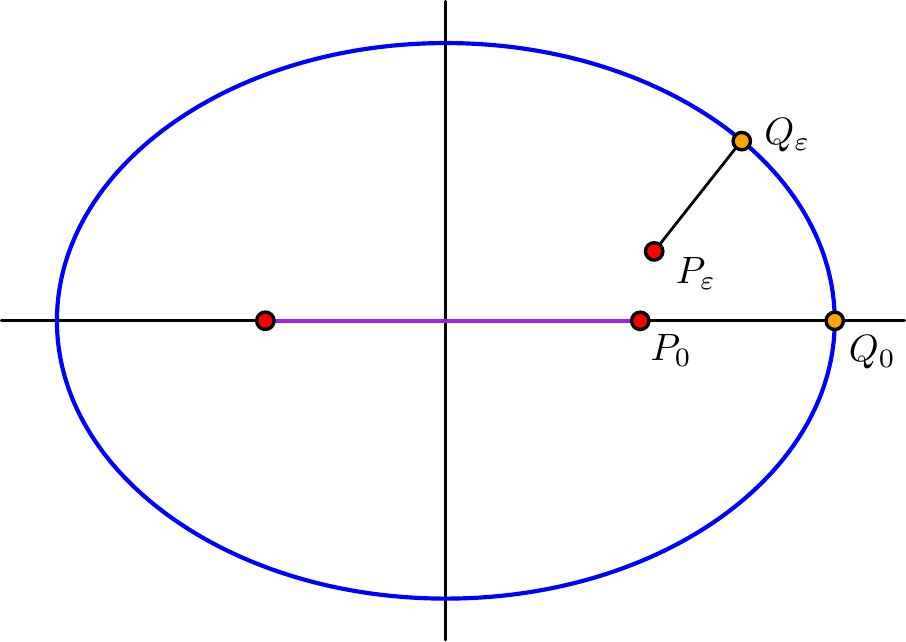}
\caption{Cut locus of an ellipse}
\end{figure}\vspace*{-0.3cm}
\bgd
x_\ep=\frac{a^2(\frac{a^2-b^2}{a}+\ep\alpha)}{t+a^2},\,\,y_\ep=\frac{b^2 \ep\beta}{t+b^2}.
\edd
It follows that
\begin{equation}
d_\ep^2(\alpha,\beta):=d^2(P_\ep,Q_\ep)=\frac{t^2}{a^2}\left(\frac{a^2-b^2+a\ep\alpha}{t+a^2}\right)^2+\frac{t^2}{b^2}\left(\frac{b\ep\beta}{t+b^2}\right)^2\label{dsqellip}
\end{equation}
Using $t(0)=-b^2$ and simplifications lead us to the following
\begin{align*}
\lim_{\ep\to 0^+}\frac{d_\ep^2-d_0^2}{\ep} & =\frac{2ab^4\alpha}{a^2(a^2-b^2)}-\lim_{\ep\to 0^+}\left(\frac{(t+b^2)(a^2b^2-a^2 t+2b^2t)}{\ep(t+a^2)^2}-\beta^2 \frac{t^2\ep}{(t+b^2)^2}\right)\\
& = \frac{2ab^4\alpha}{a^2(a^2-b^2)}-\frac{2b^2}{a^2-b^2}\lim_{\ep\to 0}\frac{t+b^2}{\ep}+\beta^2 b^4\lim_{\ep\to 0}\frac{\ep}{(t+b^2)^2}\\
& = \frac{2ab^4\alpha}{a^2(a^2-b^2)}-\frac{2ab^2\alpha}{a^2-b^2}=-\frac{2b^2\alpha}{a}.
\end{align*}
\hf On the other hand, for $\ep<0$, the point $P_\ep(\alpha,\beta)$ lies in the fourth quadrant. By symmetry, the distance between $P_\ep(\alpha,\beta)$ and $Q_\ep(\alpha,\beta)$ is the same as that between $P_{-\ep}(-\alpha,\beta)$ and $Q_{-\ep}(-\alpha,\beta)$. However, it is seen that 
\bgd
d^2(P_{-\ep}(-\alpha,\beta), Q_{-\ep}(-\alpha,\beta))=d^2_{-\ep}(-\alpha,\beta)
\edd
as defined in \eqref{dsqellip}. Therefore,
\begin{align*}
\lim_{\ep\to 0^-}\frac{d^2(P_{\ep}(\alpha,\beta), Q_{\ep}(\alpha,\beta))-d^2(P_{0}(\alpha,\beta), Q_{0}(\alpha,\beta))}{\ep} & =  \lim_{\ep\to 0^-}\frac{d^2 _{-\ep}(-\alpha,\beta)-d^2 _0(-\alpha,\beta)}{\ep}\\
& =  -\lim_{-\ep\to 0^+}\frac{d^2 _{-\ep}(-\alpha,\beta)-d^2 _0(-\alpha,\beta)}{-\ep}\\
& = -\frac{2b^2\alpha}{a},
\end{align*}
where the last equality follows from the right hand derivative of $d^2$, as computed previously. \\
\hf When $\beta=0$ we would like to compute $d^2_\ep(\alpha,0)$. If $\ep>0$, then 
\begin{equation}
\label{dsqP01}d^2_\ep(\alpha,0)=(b^2/a-\ep\alpha)^2=\frac{b^4}{a^2}-\frac{2b^2\alpha\ep}{a}+\alpha^2\ep^2.
\end{equation}
On the other hand, if $\ep<0$ is sufficiently small, then there are two points on the ellipse closest to $P_\ep(\alpha,0)=(\frac{a^2-b^2}{a}+\ep\alpha,0)$, with exactly one on the first quadrant, say $Q_\ep$. Since the segment $P_\ep Q_\ep$ must be orthogonal to the tangent to the ellipse at $Q_\ep$, we obtain the coordinates for $Q_\ep$:
\bgd
x_\ep=\frac{a^2(\frac{a^2-b^2}{a}+\ep\alpha)}{a^2-b^2},\,\,y^2_\ep=b^2\bigg(1-\frac{x_\ep^2}{a^2}\bigg),\,\,y_\ep>0.
\edd
We may compute the distance
\begin{equation}
\label{dsqP02}d^2_{\ep}(\alpha,0):=(d(P_\ep,Q_\ep))^2=\frac{b^4}{a^2}-\frac{2b^2\alpha\ep}{a}-\frac{b^2\alpha^2\ep^2}{a^2-b^2},
\end{equation}
where $\ep<0$. Combining \eqref{dsqP01} and \eqref{dsqP02} we conclude that $d^2$ is differentiable at $P_0=(\frac{a^2-b^2}{a},0)$, a point in $\cutn$ but not in $\sen$. However, comparing the quadratic part of $d^2$ in \eqref{dsqP01},\eqref{dsqP02} we conclude that $d^2$ is not twice differentiable at $P_0$. 
\end{eg}\vspace*{-0.2cm}
\begin{thm}\label{thm: Morse-Bott}
Let $N$ be a closed embedded submanifold of a complete Riemannian manifold $M$. Let $d:M\to\R$ be the distance function with respect to $N$. If $f=d^2$, then its restriction to $M-\cutn$ is a Morse-Bott function, with $N$ as the critical submanifold. Moreover, the gradient flow of $f$ deforms $M-\cutn$ to $N$.
\end{thm}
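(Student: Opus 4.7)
The plan is to verify three statements in turn: (i) $f$ is smooth on $M - \cutn$; (ii) $N$ is a non-degenerate critical submanifold of $f|_{M - \cutn}$; and (iii) the negative gradient flow of $f$ provides a deformation of $M - \cutn$ onto $N$. For (i), I will show that the normal exponential restricts to a diffeomorphism from the open set
\[
\mathcal{O}_N := \{(p, tv) \in \nu \,:\, v \in S(\nu_p),\; 0 \leq t < \mathpzc{s}(v)\}
\]
onto $M - \cutn$. Openness of $\mathcal{O}_N$ follows from the continuity of $\mathpzc{s}$ (Proposition \ref{snucts}). Surjectivity is immediate: if $q \in M - \cutn$, a minimizing $N$-geodesic $\gamma_v$ to $q$ must extend past $q$ as an $N$-geodesic (otherwise $q \in \cutn$), so $d(q,N) < \mathpzc{s}(v)$. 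For the containment $\exp_\nu(\mathcal{O}_N) \subseteq M - \cutn$ one argues: if $q = \gamma_v(t) \in \cutn$ with $t < \mathpzc{s}(v)$, any witnessing $N$-geodesic $\eta$ to $q$ is either $\gamma_v|_{[0,t]}$ (contradicting $t < \mathpzc{s}(v)$) or distinct from it, in which case comparing $\eta(0)$, $q$, and $\gamma_v(t+s)$ by the triangle inequality forces equality and hence forces $\eta$ to extend as an $N$-geodesic past $q$, contradicting the choice of $\eta$. Injectivity of $\exp_\nu|_{\mathcal{O}_N}$ uses that two distinct minimal $N$-geodesics to the same point would place that point in $\sen \subseteq \cutn$; the full-rank property of $d\exp_\nu$ uses that a focal point of $N$ along $\gamma_v$ at time $t < \mathpzc{s}(v)$ would destroy minimality past $t$. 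Under this diffeomorphism, $f$ pulls back to $t^2$, which is smooth.

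For (ii), at any $q \in M - (\cutn \cup N)$ the Gauss lemma for submanifolds gives $\grad f(q) = 2 d(q, N)\, \gamma_v'(d(q,N))$ along the unique $N$-geodesic $\gamma_v$ to $q$, and this is nonzero. Since $f \geq 0$ with $f^{-1}(0) = N$, the critical set of $f|_{M - \cutn}$ equals $N$. Non-degeneracy of $\hess(f)$ in directions normal to $N$ is exactly the content of Corollary \ref{dsq-MB}, extracted from the Fermi-coordinate expression $f = x_{k+1}^2 + \cdots + x_n^2$ (Proposition \ref{dsq-Fermi}).

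For (iii), along an integral curve of $-\grad f$ one has $\tfrac{d}{dt} f(x(t)) = -\|\grad f\|^2 = -4 f(x(t))$, so $f(x(t)) = f(x(0))\, e^{-4t}$. Writing $q = \gamma_v(D)$ with $D = d(q, N)$, the trajectory is $x(t) = \gamma_v(D e^{-2t})$, which traverses the unique $N$-geodesic back to $\gamma_v(0) \in N$ as $t \to \infty$; it never leaves $M - \cutn$ because every subsegment of a unique minimizing $N$-geodesic is again a unique minimizing $N$-geodesic to its endpoint. Reparametrizing the flow lines from $[0, \infty)$ to $[0, 1]$ then yields a finite-time strong deformation retraction. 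The main obstacle I anticipate lies in step (i): establishing cleanly that $\exp_\nu|_{\mathcal{O}_N}$ is a diffeomorphism onto $M - \cutn$, in particular showing the image does not overshoot into $\cutn$. Once this is secured, the Morse-Bott structure and the exponential decay of $f$ along trajectories immediately supply the remainder.
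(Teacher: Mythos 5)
Your proposal is correct and follows essentially the same route as the paper: identify $M-\cutn$ with the star-shaped region of $\nu$ below $\mathpzc{s}$ via $\exp_\nu$ (the paper extracts this from Lemma \ref{chsnu} and Proposition \ref{snucts}), get smoothness and non-degeneracy of the Hessian from the Fermi-coordinate description (Proposition \ref{dsq-Fermi}, Corollary \ref{dsq-MB}), and compute the gradient as $2d(q)\gamma'(d(q))$ so that the flow lines are $\gamma(d(q)e^{-2t})$. Your use of the Gauss lemma in place of the paper's $\|\nabla d\|=1$ plus orthogonal-decomposition computation, and your more explicit verification that $\exp_\nu(\mathcal{O}_N)=M-\cutn$, are only presentational differences.
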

\begin{proof}
It follows from Lemma \ref{chsnu} the map $\exp_\nu^{-1}:M - \paran{\cutn \cup N}\to \nu - \{0\}$ is an (into) diffeomorphism and $\dist(N,q) = \norm{\exp^{-1}_\nu(q)}$ and hence the distance function is of class $C^{\infty}$ at $q\in M - \paran{\cutn\cup N}$. Using Fermi coordinates (cf. Proposition \ref{dsq-Fermi}), we have seen that the distance squared function is smooth around $N$ and therefore it is smooth on $M - \cutn$. By Corollary \ref{dsq-MB}, the Hessian of this function at $N$ is non-degenerate in the normal direction. It is well-known \cite[Proposition 4.8]{Sak96} that $\|\nabla d(q)\|=1$ if $d$ is differentiable at $q\in M$. Thus, for $q\in M-(\cutn\cup N)$ we have
\begin{equation}\label{graddsq}
\|\nabla f(q)\|=2d(q)\|\nabla d(q)\|=2d(q).
\end{equation}
Let $\gamma$ be the unique unit speed $N$-geodesic that joins $N$ to $q$, i.e., 
\bgd
\gamma:[0,d(q)]\to M,\,\,\gamma(0)=p,\,\gamma(d(q))=q,\,\|\gamma'\|=1.
\edd
We may write $\nabla f(q)=\lambda \gamma'(d(q))+w$, where $w$ is orthogonal to $\gamma'(d(q))$. But 
\bgd
\left\langle \nabla f\big|_q, \gamma'(d(q))\right\rangle = \frac{d}{dt}f(\gamma(d(q)+t))\Big|_{t=0}=\frac{d}{dt}(d(q)^2+2d(q)t+t^2)\Big|_{t=0}=2d(q).
\edd
Thus, $\lambda=2d(q)$ and combined with \eqref{graddsq}, we conclude that $\nabla f(q)=2d(q)\gamma'(d(q))$. Therefore, the negative gradient flow line initialized at $q\in M-\cutn$ is given by
\bgd
\eta(t)=\gamma(d(q)e^{-2t}).
\edd
These flow lines define a flow which deform $M-\cutn$ to $N$ in infinite time. 
\end{proof}
The reader may choose to revisit the example of $GL(n,\R)$ discussed in \S \ref{Sec: Example} and treat it as a concrete illustration of the Theorem above.

	
\section{Applications to Lie Groups}\label{Sec: Lie}

\hf Due to classical results of Cartan, Iwasawa and others, we know that any connected Lie group $G$ is diffeomorphic to the product of a maximally compact subgroup $K$ and an Euclidean space. In particular, $G$ deforms to $K$. For semi-simple groups, this decomposition is stronger and is attributed to Iwasawa. The Killing form on the Lie algebra $\mathfrak{g}$ is non-degenerate and negative definite for compact semi-simple Lie algebras. For such a Lie group $G$, consider the Levi-Civita connection associated to the bi-invariant metric obtained from negative of the Killing form. This connection coincides with the Cartan connection. \\
\hf We consider two examples, both of which are non-compact and non semi-simple. We prove that these Lie groups $G$ deformation retract to maximally compact subgroups $K$ via gradient flows of appropriate Morse-Bott functions. This requires a choice of a left-invariant metric which is right-$K$-invariant and a careful analysis of the geodesics associated with the metric. In particular, we provide a possibly new proof of the surjectivity of the exponential map for $U(p,q)$.

\subsection{Invertible matrices with positive determinant}\label{Sec: GLnSOn}

\hf Let $g$ be a left-invariant metric on $GL(n,\rbb)$, set of invertible matrices. Recall that a left-invariant metric $g$ on a Lie group is determined by its restriction at the identity. For $A\in GL(n,\rbb)$, consider the left multiplication map $l_A:GL(n,\rbb) \to GL(n,\rbb),~ B\mapsto AB$. This extends to a linear isomorphism from $M(n,\rbb)$ to itself. Thus, the differential $(Dl_A)_I:T_IGL(n,\rbb)\to T_AGL(n,\rbb)$ is an isomorphism and given by $l_A$ itself. For $X,Y\in T_IGL(n,\rbb)$, 
	\begin{equation*} \label{eq: left-invariant iso}
		g_I(X,Y) = g_A((Dl_A)_IX,(Dl_A)_IY)=g_A(AX,AY).
	\end{equation*}
We choose the left-invariant metric on $GL(n,\rbb)$ generated by the the Euclidean metric at $I$. Therefore,
	\begin{displaymath}
		g_{A^{-1}}(X,Y) = \innerprod{AX}{AY}_I := \trace{(AX)^T\!AY} = \trace{X^T\!A^T\!AY}.
	\end{displaymath}
Note that this metric is right-$O(n,\rbb)$-invariant. We are interested in the distance between an invertible matrix $A$ (with $\det(A)>0$) and $SO(n,\rbb)$. Since $SO(n,\rbb)$ is compact, there exists $B\in SO(n,\rbb) $ such that $d(A,B) = \dist(A,SO(n,\rbb))$. 
\begin{lmm}\label{CartanGLn}
If $D$ is a diagonal matrix with positive diagonal entries $\lambda_1,\cdots,\lambda_n$, then 
\begin{displaymath}
\dist(D,SO(n,\rbb)) = d(D,I).
\end{displaymath}
Moreover, $I$ is the unique minimizer and the associated minimal geodesic is given by $\gamma(t)=e^{t\log D}$.
\end{lmm}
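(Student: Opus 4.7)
The plan is to exploit that the left-invariant metric $g$, being also right-$SO(n,\rbb)$-invariant, descends along a Riemannian submersion $\pi:GL^+(n,\rbb)\to P_n=GL^+(n,\rbb)/SO(n,\rbb)$, $A\mapsto (AA^T)^{1/2}$, whose base is the classical symmetric space of positive definite symmetric matrices. First I would verify that right-$SO(n,\rbb)$-invariance of $g$ follows at once from $B^T=B^{-1}$ for $B\in SO(n,\rbb)$, and that the fibres of $\pi$ are precisely the right $SO(n,\rbb)$-orbits $\{AQ:Q\in SO(n,\rbb)\}$. A short computation at $I$ identifies the quotient metric on $T_IP_n$ with $\innerprod{H}{K}=\trace{HK}$ on symmetric matrices, and by $GL^+$-invariance of the quotient metric this propagates to the standard $GL^+$-invariant symmetric space metric $\innerprod{V}{W}_S=\trace{S^{-1}VS^{-1}W}$ on $P_n$. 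In this metric, $P_n$ is complete and non-positively curved, its geodesics through $I$ with symmetric direction $X$ are $e^{tX}$, and $d_{P_n}(I,e^X)=\sqrt{\trace{X^2}}$.

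Next, I would show that $\gamma(t)=e^{t\log D}$ is a horizontal geodesic in $GL^+(n,\rbb)$ joining $I$ to $D$. The Euler-Arnold equation for the body velocity $V(t):=\gamma(t)^{-1}\gamma'(t)$ with respect to the trace inner product at $I$ reduces to $\dot V=[V,V^T]$, which is solved by the constant $V\equiv \log D$ since $\log D$ is symmetric; hence $\gamma$ is a geodesic. Horizontality at each $\gamma(t)$ follows from $\trace{(\log D)K}=0$ for skew-symmetric $K$. Equivalently, $\gamma$ is the horizontal lift through $I$ of the minimizing $P_n$-geodesic from $I$ to $D$, and its length is $\sqrt{\trace{(\log D)^2}}=\sqrt{\sum (\log\lambda_i)^2}$.

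The distance identity then follows from the Riemannian submersion inequality $L(\alpha)\geq L(\pi\circ\alpha)$. For any curve $\alpha$ from $D$ to $B\in SO(n,\rbb)$, the projection runs from $\pi(D)=D$ to $\pi(B)=I$, so $L(\alpha)\geq d_{P_n}(D,I)=\sqrt{\sum (\log\lambda_i)^2}$; taking the infimum gives $d(D,B)\geq d(D,I)$, with equality achieved at $B=I$ via $\gamma$. This yields $\dist(D,SO(n,\rbb))=d(D,I)$.

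For uniqueness and identification of the minimizing geodesic, suppose $d(D,B)=d(D,I)$. A minimizing curve $\alpha$ from $D$ to $B$ must attain equality in the submersion inequality, hence be horizontal, and $\pi\circ\alpha$ must coincide with the unique minimizing $P_n$-geodesic $\bar\gamma(t)=e^{(1-t)\log D}$ from $D$ to $I$ (uniqueness because $P_n$ is simply connected and non-positively curved). The horizontal lift of $\bar\gamma$ starting at $D$ is $\bar\gamma$ itself inside $GL^+(n,\rbb)$, whose endpoint is $I$; hence $B=I$ and the minimizing geodesic from $D$ to $I$ is $\gamma$. The hard part will be making this last step rigorous in the absence of an a priori completeness statement for $(GL^+(n,\rbb),g)$; this can be handled either by checking that this specific left-invariant metric is complete, or through a minimizing-sequence argument that exploits the compactness of $SO(n,\rbb)$ to extract a horizontal limit minimizer.
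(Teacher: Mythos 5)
Your argument is correct, but it follows a genuinely different route from the paper's. The paper works pointwise: compactness of $SO(n,\rbb)$ yields a nearest point $B$, completeness of the left-invariant metric together with Hopf--Rinow yields a minimal geodesic from $B$ to $D$, the first variation formula forces that geodesic to leave $SO(n,\rbb)$ orthogonally, the Martin--Neff description of such geodesics gives it the form $Be^{tW}$ with $W$ symmetric, and the punchline is uniqueness of the polar decomposition: comparing $D=Be^{W}$ with $D=I\cdot D$ forces $B=I$ and $W=\log D$. You instead pass to the quotient: the Riemannian submersion onto the symmetric space of positive definite matrices gives the lower bound $d(D,B)\ge d_{P_n}(D,I)$ for \emph{all} $B\in SO(n,\rbb)$ simultaneously, the horizontal geodesic $e^{t\log D}$ realizes it, and uniqueness of both the minimizer and the minimal geodesic follows from Cartan--Hadamard uniqueness of geodesics in the nonpositively curved base together with uniqueness of horizontal lifts. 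The paper's route is shorter and its key step (polar decomposition) is purely algebraic; your route buys a conceptual lower bound that does not require first producing a minimizer, and it handles arbitrary $A$ directly rather than reducing to the diagonal case by SVD, which is how the paper proceeds after the lemma. Two caveats, neither fatal: the quotient metric in your chart $A\mapsto (AA^T)^{1/2}$ is only homothetic to (and, off commuting directions, not literally equal to) $\trace{S^{-1}VS^{-1}W}$, but all you actually use is that the base is a complete Cartan--Hadamard manifold whose geodesics through $I$ are $t\mapsto e^{tX}$, which is standard; and the completeness you defer to the end is not actually hard --- a left-invariant metric on a Lie group is homogeneous, hence complete, and this is exactly the fact the paper invokes.
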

\begin{proof}
Let $B\in SO(n,\rbb)$ satisfying $d(A,B) = \dist(A,SO(n,\rbb))$. Since with respect to the left-invariant metric $GL^+(n,\rbb)$ is complete, there exists a minimal geodesic $\gamma:[0,1]\to GL^+(n,\rbb)$ joining $B$ to $D$, i.e., 
\begin{displaymath}
\gamma(0) = B,~~\gamma(1) = D, ~~\text{ and } ~~ l(\gamma) = d(D,B).
\end{displaymath}
The first variational principle implies that $\gamma'(0)$ is orthogonal to $T_BSO(n,\rbb)$. It follows from \cite[\S 2.1]{MaNe16} that $\eta(t)=e^{tW}$ is a geodesic if $W$ is a symmetric matrix. Moreover, $\eta'(0)=W$ is orthogonal to $T_I SO(n,\rbb)$. As left translation is an isometry and isometry preserves geodesic, it follows that $\gamma(t) = Be^{tW}$ is a geodesic with $\gamma'(0)$ orthogonal to $T_BSO(n,\rbb)$. By the defining properties of $\gamma$,  $D=\gamma(1)=Be^W$. Since $e^W$ is symmetric positive definite, we obtain two polar decompositions of $D$, i.e., $D = ID$ and $ D = Be^W$. By the uniqueness of the polar decomposition for invertible matrices, $B=I$ and $D=e^W$.\\
\hf In order to compute $d(I,D)$, note that 
\begin{displaymath}
e^W = D = e^{\log D},
\end{displaymath}
where $\log D$ denotes the diagonal matrix with entries $\log \lambda_1,\cdots,\log \lambda_n$. As $W$ and $\log D$ are symmetric, and matrix exponential is injective on the space of symmetric matrices, we conclude that $W = \log D$. The geodesic is given by $\gamma(t)=e^{t\log D}$ and 
\begin{equation}\label{eq: distance left invariant}
\dist(D,SO(n,\rbb)) = \norm{\gamma'(0)}_I = \norm{\log D}_I = \left(\sum_{i=1}^n (\log \lambda_i)^2\right)^{\frac{1}{2}}.
\end{equation}
Thus, the distance squared function will be given by $\sum_{i=1}^n (\log \lambda_i)^2$. 
\end{proof}
\hf Now for any $A\in GL^+(n,\rbb)$ we can apply the SVD decomposition, i.e., $A = UDV^T$ with $\sqrt{A^TA} = VDV^T$ and $\log\sqrt{A^TA} = V(\log D) V^T$. Note that $U,V\in SO(n,\rbb)$ and $D$ is a diagonal matrix with positive entries. The left-invariant metric is right-invariant with respect to orthogonal matrices. Thus,
\begin{displaymath}
\dist(A,SO(n,\rbb)) = \dist(D,SO(n,\rbb)) = \norm{\log D}_I,
\end{displaymath}
where the last equality follows from the lemma (see \eqref{eq: distance left invariant}). As 
\bgd
\norm{\log D}_I = \norm{V(\log D) V^T}_I = \norm{\log\sqrt{A^TA}}_I,
\edd
It follows from the arguments of the lemma and the metric being bi-$O(n,\rbb)$-invariant that
\bgd
\gamma(t)=Ue^{t \log D}V^T
\edd
is a minimal geodesic joining $UV^T$ to $A$, realizing $\dist(A,SO(n,\rbb))$. As the minimzer $UV^T$ is unique, $\mathrm{Se}(SO(n,\rbb))$ is empty, implying that $\mathrm{Cu}(SO(n,\rbb))$ is empty as well. In fact, $UV^T=A\sqrt{A^T A}^{-1}$ and 
\begin{equation}\label{GLdefOver2}
\gamma(t)=Ue^{t \log D}V^T=UV^T Ve^{t \log D}V^T=A\sqrt{A^T A}^{-1}e^{t\log \sqrt{A^T A}}.
\end{equation}
If we compare \eqref{GLdefOver1}, the deformation of $GL(n,\rbb)$ to $O(n,\rbb)$ inside $M(n,\rbb)$, with \eqref{GLdefOver2}, then in both cases, an invertible matrix $A$ deforms to $A\sqrt{A^T A}^{-1}$. Finally, observe that the normal bundle of $SO(n,\rbb)$ is diffeomorphic to $GL^+(n,\rbb)$.

\subsection{Indefinite unitary groups}\label{Sec: Upq}
	
\hf Let $n$ be a positive integer with $n=p+q$. Consider the inner product on $\C^n$ given by
\bgd
\lan (w_1,\ldots,w_n),(z_1,\ldots,z_n)\ran = z_1\overline{w_1}+\cdots+z_p\overline{w_p}-z_{p+1}\overline{w_{p+1}}-\cdots- z_n\overline{w_n}.
\edd
This is given by the matrix $I_{p,q}$ in the following way:
\bgd
\lan \mathbf{w},\mathbf{z}\ran=\overline{\mathbf{w}}^t I_{p,q} \mathbf{z}=\left(\begin{array}{ccc}
\overline{w}_1 & \cdots & \overline{w}_n
\end{array}\right)\left(\begin{array}{cc}
I_p & 0 \\
0 & -I_q
\end{array}\right)\left(\begin{array}{c}
z_1\\
\vdots\\
z_n
\end{array}\right)
\edd
Let $U(p,q)$ denote the subgroup of $GL(n,\C)$ preserving this indefinite form, i.e., $\mathcal{A}\in \upq$ if and only if $\mathcal{A}^\ast I_{p,q}\mathcal{A}=I_{p,q}$. In particular, $\det \mathcal{A}$ is a complex number of unit length. By convention, $I_{n,0}=I_n$ and $ I_{0,n}=-I_n$, both of  which corresponds to $U(n,0)=U(n)=U(0,n)$, the unitary group. In all other cases, the inner product is indefinite.\\
\hf The group $U(1,1)$ is given by matrices of the form
\bgd
\mathcal{A}=\left(\begin{array}{cc}
\alpha & \beta\\
\lambda \overline{\beta} & \lambda \overline{\alpha}
\end{array}\right),\,\,\lambda\in S^1,\,\,|\alpha|^2-|\beta|^2=1.
\edd
More generally, we shall use 
\bgd
\mathcal{A}=\left(\begin{array}{cc}
A & B\\
C & D
\end{array}\right)
\edd
to denote an element of $\upq$. It follows from definition that $\mathcal{A}\in \upq$ if and only if
\begin{eqnarray*}
A^\ast A-C^\ast C& = & I_p\\
A^\ast B-C^\ast D& = & 0_{p\times q}\\
B^\ast B-D^\ast D& = & -I_q.
\end{eqnarray*}
Observe that if $Av=0$, then
\bgd
0=A^\ast Av=C^\ast Cv+v,
\edd
which implies that $C^\ast C$, a positive semi-definite matrix, has $-1$ as an eigenvalue unless $v=0$. Therefore, $A$ is invertible and the same argument works for $D$.
\begin{lmm}
The intersection of $U(p+q)$ with $\upq$ is $U(p)\times U(q)$. Moreover, if $\mathcal{A}\in \upq$, then $\mathcal{A}^\ast,\sqrt{\mathcal{A}^\ast\mathcal{A}}\in \upq$.
\end{lmm}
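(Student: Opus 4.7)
The plan is to split the lemma into its two assertions and handle each by exploiting the defining equation $\mathcal{A}^\ast I_{p,q}\mathcal{A}=I_{p,q}$, together with the fact that $I_{p,q}^\ast=I_{p,q}$ and $I_{p,q}^2=I_n$.

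For the first claim, I would combine the unitary condition $\mathcal{A}^\ast\mathcal{A}=I_n$ with the indefinite unitary condition. Together these force $\mathcal{A}^{-1}=\mathcal{A}^\ast$ and $\mathcal{A}^\ast I_{p,q}\mathcal{A}=I_{p,q}$, which yields $I_{p,q}\mathcal{A}=\mathcal{A} I_{p,q}$. Writing $\mathcal{A}=\bigl(\begin{smallmatrix}A & B\\ C & D\end{smallmatrix}\bigr)$ in block form and comparing the two sides of this commutation relation forces $B=0$ and $C=0$. The diagonal blocks $A,D$ must then be unitary by the original unitary relation, giving $\mathcal{A}\in U(p)\times U(q)$. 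The reverse inclusion is immediate since any block-diagonal unitary commutes with $I_{p,q}$.

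For the second claim, my plan is to introduce the involution
\[
\sigma:GL(n,\mathbb{C})\to GL(n,\mathbb{C}),\qquad \sigma(M):=I_{p,q}\,M^{-\ast}\,I_{p,q},
\]
and observe three things: (a) $\sigma$ is a group automorphism of $GL(n,\mathbb{C})$, (b) the fixed point set of $\sigma$ is exactly $U(p,q)$ (a direct rearrangement of the defining identity), and (c) $\sigma$ commutes with the adjoint, in the sense that $\sigma(M)^\ast=\sigma(M^\ast)$. Given $\mathcal{A}\in U(p,q)$, applying (c) to $\sigma(\mathcal{A})=\mathcal{A}$ immediately yields $\sigma(\mathcal{A}^\ast)=\mathcal{A}^\ast$, i.e.\ $\mathcal{A}^\ast\in U(p,q)$. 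Since $U(p,q)$ is a group, it follows that $\mathcal{A}^\ast\mathcal{A}\in U(p,q)$.

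The only delicate step is showing $Q:=\sqrt{\mathcal{A}^\ast\mathcal{A}}\in U(p,q)$, which I expect is the main point of the lemma. Setting $P:=\mathcal{A}^\ast\mathcal{A}$, the matrix $P$ is positive definite Hermitian and lies in $U(p,q)$, so $\sigma(P)=P$. Now I would compute
\[
\sigma(Q)^{2}\;=\;\sigma(Q^{2})\;=\;\sigma(P)\;=\;P
\]
using that $\sigma$ is multiplicative. Next I would verify that $\sigma(Q)$ is itself positive definite Hermitian: since $Q$ is positive definite Hermitian, so is $Q^{-1}$, and $\sigma(Q)=I_{p,q}Q^{-1}I_{p,q}$ is a congruence of a positive definite Hermitian matrix by the self-adjoint invertible matrix $I_{p,q}$, which preserves both Hermiticity and positive definiteness. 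Thus $\sigma(Q)$ and $Q$ are two positive definite Hermitian square roots of $P$; by uniqueness of such a square root, $\sigma(Q)=Q$, i.e.\ $Q\in U(p,q)$. The uniqueness of the positive Hermitian square root is the essential ingredient that makes the argument work, and it is precisely what would fail if one tried an analogous statement for an arbitrary Hermitian square root.
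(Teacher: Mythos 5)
Your proof is correct, but it follows a different route from the paper's, most notably for the key claim about $\sqrt{\mathcal{A}^\ast\mathcal{A}}$. For the intersection $U(p+q)\cap U(p,q)$, the paper adds the block relations $A^\ast A-C^\ast C=I_p$, $B^\ast B-D^\ast D=-I_q$ coming from $U(p,q)$ to the corresponding relations $A^\ast A+C^\ast C=I_p$, $B^\ast B+D^\ast D=I_q$ coming from unitarity, forcing $B^\ast B=C^\ast C=0$; your derivation of the commutation relation $I_{p,q}\mathcal{A}=\mathcal{A}I_{p,q}$ reaches the same conclusion just as cleanly. For the second assertion, the paper verifies $(\mathcal{A}^\ast\mathcal{A})^\ast I_{p,q}(\mathcal{A}^\ast\mathcal{A})=I_{p,q}$ by direct substitution using $\mathcal{A}^\ast=I_{p,q}\mathcal{A}^{-1}I_{p,q}$, and then proves $\sqrt{\mathcal{A}^\ast\mathcal{A}}\in U(p,q)$ by a hands-on spectral argument: it shows $I_{p,q}$ carries the $\lambda$-eigenspace of $\mathcal{A}^\ast\mathcal{A}$ to the $\lambda^{-1}$-eigenspace and then checks the defining identity $\sqrt{\mathcal{A}^\ast\mathcal{A}}\,I_{p,q}\sqrt{\mathcal{A}^\ast\mathcal{A}}=I_{p,q}$ on an eigenbasis. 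Your argument packages the defining condition as the fixed-point set of the Cartan-type involution $\sigma(M)=I_{p,q}M^{-\ast}I_{p,q}$ and concludes by uniqueness of the positive definite Hermitian square root; this is the standard Lie-theoretic mechanism by which a Cartan involution respects the polar decomposition, and it is arguably cleaner and generalizes immediately to other groups defined by such involutions. The paper's eigenbasis computation, by contrast, makes the action of $I_{p,q}$ on the spectrum explicit, which it reuses implicitly when computing $\log(\mathcal{A}^\ast\mathcal{A})$ later in the section. Both proofs are complete; all the auxiliary facts you invoke (multiplicativity of $\sigma$, compatibility with $M\mapsto M^\ast$, and preservation of positive definiteness under congruence by $I_{p,q}$) check out.
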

\begin{proof}
If $\mathcal{A}\in U(p)\times U(q)$, then 
\begin{eqnarray*}
A^\ast A+C^\ast C& = & I_p\\
B^\ast B+D^\ast D& = & I_q.
\end{eqnarray*}
This implies that both $B$ and $C$ are zero matrices. If $\mathcal{A}\in \upq$, then $\mathcal{A}^\ast=I_{p,q}\mathcal{A}^{-1}I_{p,q}$ and
\bgd
(\mathcal{A}^\ast\mathcal{A})^\ast I_{p,q}(\mathcal{A}^\ast\mathcal{A})=(\mathcal{A}^\ast\mathcal{A}) I_{p,q}(\mathcal{A}^\ast\mathcal{A})=I_{p,q}\mathcal{A}^{-1}I_{p,q}\mathcal{A}I_{p,q}              I_{p,q}\mathcal{A}^{-1}I_{p,q}\mathcal{A}=I_{p,q}=\mathcal{A}^\ast I_{p,q}\mathcal{A}.
\edd
This also implies that $\mathcal{A}  I_{p,q}\mathcal{A}^\ast=I_{p,q}$. \\
\hf All the eigenvalues of $\mathcal{A}^\ast\mathcal{A}$ are positive. Moreover, if $\lambda$ is an eigenvalue of $\mathcal{A}^\ast\mathcal{A}$ with eigenvector $\mathbf{v}=(v_1,\ldots,v_p,v_{p+1},\ldots, v_n)$, then 
\bgd
I_{p,q}\mathbf{v}=\mathcal{A}^\ast\mathcal{A}\,I_{p,q}\mathcal{A}^\ast\mathcal{A}\mathbf{v}=\lambda(\mathcal{A}^\ast\mathcal{A}\,I_{p,q}\mathbf{v}),
\edd
which implies that $\lambda^{-1}$ is also an eigenvalue with eigenvector $\mathbf{v}'=(v_1,\ldots,v_p,-v_{p+1},\ldots, -v_n)$. If $\{\mathbf{v}_1, \ldots,\mathbf{v}_n\}$ is an eigenbasis of $\mathcal{A}^\ast\mathcal{A}$ with (possibly repeated) eigenvalues $\lambda_1,\ldots,\lambda_n$, then 
\bgd
\sqrt{\mathcal{A}^\ast\mathcal{A}}\,I_{p,q}\sqrt{\mathcal{A}^\ast\mathcal{A}}\mathbf{v}_j=\sqrt{\mathcal{A}^\ast\mathcal{A}}\,I_{p,q}\sqrt{\lambda_j}\mathbf{v}_j=\sqrt{\lambda_j}\sqrt{\mathcal{A}^\ast\mathcal{A}}\mathbf{v}_j'=\mathbf{v}_j'=I_{p,q}\mathbf{v}_j.
\edd
Thus, $\sqrt{\mathcal{A}^\ast\mathcal{A}}$ satisfies the defining relation for a matrix to be in $\upq$. \end{proof}
We may use the polar decomposition (for matrices in $GL(n,\C)$) to write
\bgd
\mathcal{A}=U |\mathcal{A}|,\,\,\textup{where}\,\,U=\mathcal{A}\left(\sqrt{\mathcal{A}^\ast\mathcal{A}}\right)^{-1}, |\mathcal{A}|=\sqrt{\mathcal{A}^\ast\mathcal{A}},
\edd
where $U,|\mathcal{A}|\in \upq$. For $\textup{U}(1,1)$ this decomposition takes the form
\bgd
\left(\begin{array}{cc}
\alpha & \beta\\
\lambda \overline{\beta} & \lambda \overline{\alpha}
\end{array}\right)=\left(\begin{array}{cc}
\frac{\alpha}{|\alpha|} & 0\\
0  & \lambda \frac{\overline{\alpha}}{|\alpha|}
\end{array}\right)\left(\begin{array}{cc}
|\alpha| & \frac{|\alpha|\beta}{\alpha}\\
\frac{|\alpha| \overline{\beta}}{\overline{\alpha}} & |\alpha|
\end{array}\right)
\edd
\hf The Lie algebra $\mathfrak{u}_{p,q}$ is given by matrices $X\in M_n(\C)$ such that
\bgd
X^\ast I_{p,q}+I_{p,q}X=0.
\edd
This is real Lie subalgebra of $M_{p+q}(\C)$. It contains the subalgebras $\mathfrak{u}_p, \mathfrak{u}_q$ as Lie algebras of the subgroups $U(p)\times I_q$ and $I_p \times U(q)$. Consider the inner product 
\bgd
\lan \cdot,\cdot\ran:\mathfrak{u}_{p,q}\times \mathfrak{u}_{p,q}\to\R,\,\,\,\lan X,Y\ran:=\textup{trace}(X^\ast Y).
\edd
\begin{lmm}
The inner product is symmetric and positive-definite. 
\end{lmm}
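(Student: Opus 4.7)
The plan is to verify the three defining properties of a real inner product on $\mathfrak{u}_{p,q}$, viewed as a real vector space: that $\langle X, Y\rangle$ is real-valued, symmetric, and positive-definite. $\mathbb{R}$-bilinearity is automatic from the linearity of the trace.

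For positive-definiteness, this is essentially free: for any complex matrix $X$, $\mathrm{trace}(X^\ast X) = \sum_{i,j}|X_{ij}|^2$ is the squared Frobenius norm, which is non-negative and vanishes only when $X = 0$. So the first thing I would note is that $\langle X,X\rangle \geq 0$ with equality iff $X = 0$, regardless of whether $X$ lies in $\mathfrak{u}_{p,q}$.

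The only subtle point is showing that $\langle X, Y\rangle$ is actually \emph{real} when $X, Y \in \mathfrak{u}_{p,q}$, which will simultaneously yield symmetry. Since $I_{p,q}^2 = I_n$, the defining relation $X^\ast I_{p,q} + I_{p,q} X = 0$ can be rewritten as $X^\ast = -I_{p,q} X I_{p,q}$, and similarly for $Y$. Using the cyclic property of trace I would compute
\begin{equation*}
\mathrm{trace}(X^\ast Y) = \mathrm{trace}(-I_{p,q} X I_{p,q} Y) = \mathrm{trace}(-Y I_{p,q} X I_{p,q}) = \mathrm{trace}(Y^\ast X),
\end{equation*}
where the middle equality cycles $-I_{p,q} X I_{p,q}$ past $Y$, and the final equality identifies $-I_{p,q} Y I_{p,q} = Y^\ast$. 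On the other hand, for any complex matrices one has the general identity $\overline{\mathrm{trace}(X^\ast Y)} = \mathrm{trace}((X^\ast Y)^\ast) = \mathrm{trace}(Y^\ast X)$, so the computation above gives $\overline{\langle X, Y\rangle} = \langle X, Y\rangle$, proving that the pairing takes real values.

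Combining, symmetry follows immediately: $\langle Y, X\rangle = \mathrm{trace}(Y^\ast X) = \mathrm{trace}(X^\ast Y) = \langle X, Y\rangle$. There is no real obstacle here; the only nontrivial maneuver is the cyclic shuffle in the trace identity, which depends crucially on $I_{p,q}$ being its own inverse and on the defining relation of $\mathfrak{u}_{p,q}$. Without the $I_{p,q}$ twist, the same trace pairing is the standard Hermitian form and would not be real-valued on a complex subspace; the miracle is that the indefinite signature on $\mathbb{C}^n$ is exactly what makes the Frobenius form restrict to a genuine real inner product on $\mathfrak{u}_{p,q}$.
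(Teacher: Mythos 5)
Your proof is correct and follows essentially the same route as the paper: both use the identity $X^\ast=-I_{p,q}XI_{p,q}$ together with cyclicity of the trace to get $\operatorname{trace}(X^\ast Y)=\operatorname{trace}(Y^\ast X)$, combine this with $\overline{\operatorname{trace}(X^\ast Y)}=\operatorname{trace}(Y^\ast X)$ to conclude realness and symmetry, and observe that $\operatorname{trace}(X^\ast X)$ is the squared Frobenius norm for positive-definiteness. No gaps.
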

\begin{proof}
Note that 
\bgd
\lan X,Y\ran=\textup{trace}(-I_{p,q}XI_{p,q}Y)=\textup{trace}(-I_{p,q}YI_{p,q}X)=\lan Y,X\ran.
\edd
Since $\overline{\lan X,Y\ran}=\lan Y,X\ran$ due to the invariance of trace under transpose, we conclude that the inner product is real and symmetric. It is positive-definite as $\lan X,X\ran=\textup{trace}(X^\ast X)\geq 0$ and equality holds if and only if $X$ is the zero matrix. 
\end{proof}
The Riemannian metric obtained by left translations of $\lan\cdot,\cdot\ran$ will also be denoted by $\lan \cdot,\cdot\ran$. We shall analyze the geodesics for this metric. The Lie algebra $\mathfrak{u}_p\oplus \mathfrak{u}_q$ of $U(p)\times U(q)$ consists of 
\bgd
\left(\begin{array}{cc}
A & 0 \\
0 & D
\end{array}\right),\,\,A+A^\ast=0,\,D+D^\ast =0.
\edd
Let $\mathfrak{n}$ denote the orthogonal complement of $\mathfrak{u}_p\oplus \mathfrak{u}_q$ inside $\mathfrak{u}_{p,q}$. As $\mathfrak{n}$ is of (complex) dimension $pq$, and 
\bgd
\left\{\left(\begin{array}{cc}
0 & B\\
B^\ast & 0
\end{array}\right)\,\Big|\,B\in M_{p,q}(\C)\right\}
\edd
is contained in $\mathfrak{n}$, this is all of it. We may verify that
\begin{eqnarray*}
\left[ \left(\begin{array}{cc}
A & 0\\
0 & D
\end{array}\right),\left(\begin{array}{cc}
0 & B\\
B^\ast & 0
\end{array}\right)\right] & = & \left(\begin{array}{cc}
0 & AB-BD\\
DB^\ast-B^\ast A & 0
\end{array}\right)\in\mathfrak{n}\\
\left[ \left(\begin{array}{cc}
0 & B\\
B^\ast & 0
\end{array}\right),\left(\begin{array}{cc}
0 & C\\
C^\ast & 0
\end{array}\right)\right] & = & \left(\begin{array}{cc}
BC^\ast-CB^\ast & 0\\
0 A & B^\ast C-C^\ast B
\end{array}\right)\in\mathfrak{u}_p\oplus \mathfrak{u}_q.
\end{eqnarray*}
\begin{lmm}
Let $\gamma$ be the integral curve, initialized at $e$, for a left-invariant vector field $Y$. This curve is a geodesic if $Y(e)$ either belongs to $\mathfrak{n}$ or to $\mathfrak{u}_p\oplus\mathfrak{u}_q$.
\end{lmm}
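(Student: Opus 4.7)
The plan is to invoke the standard criterion that, for a left-invariant metric on a Lie group $G$, the one-parameter subgroup $t\mapsto \exp(tX)$ is a geodesic if and only if
\[
\lan [X, Z], X\ran = 0 \quad \textup{for every } Z \in \mathfrak{g}.
\]
Since the inner product of two left-invariant vector fields is a constant function on $G$, the Koszul formula at $e$ collapses to
\[
2\lan \nabla_X X, Z\ran = -2\lan [X, Z], X\ran,
\]
so the geodesic equation $\nabla_{\gamma'}\gamma'=0$ for the integral curve $\gamma$ of the left-invariant field $Y$ with $Y(e)=X$ is equivalent to the displayed orthogonality. Thus the task reduces to checking this identity for every $X$ lying in $\mathfrak{n}$, and for every $X$ lying in $\mathfrak{u}_p\oplus \mathfrak{u}_q$.

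To verify it, I would decompose an arbitrary $Z \in \mathfrak{u}_{p,q}$ as $Z=Z_1+Z_2$ with $Z_1 \in \mathfrak{u}_p\oplus\mathfrak{u}_q$ and $Z_2\in \mathfrak{n}$, and use the two bracket relations established immediately before the lemma,
\[
[\mathfrak{u}_p\oplus\mathfrak{u}_q,\mathfrak{n}]\subseteq\mathfrak{n}, \qquad [\mathfrak{n},\mathfrak{n}]\subseteq \mathfrak{u}_p\oplus\mathfrak{u}_q,
\]
together with the orthogonality of $\mathfrak{n}$ and $\mathfrak{u}_p\oplus\mathfrak{u}_q$ that is built into the definition of $\mathfrak{n}$. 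If $X \in \mathfrak{u}_p\oplus\mathfrak{u}_q$, then $[X,Z_2]\in\mathfrak{n}$ is orthogonal to $X$, so only $\lan [X,Z_1], X\ran$ survives; if instead $X \in \mathfrak{n}$, then $[X,Z_2]\in \mathfrak{u}_p\oplus \mathfrak{u}_q$ is again orthogonal to $X$, and once more only $\lan [X,Z_1], X\ran$ needs attention.

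The remaining trace is handled by a direct computation using the explicit block structure: elements of $\mathfrak{u}_p\oplus\mathfrak{u}_q$ are block-diagonal skew-Hermitian matrices, while elements of $\mathfrak{n}$ are off-diagonal Hermitian matrices. Whichever subspace $X$ lies in, the adjoint $[X,Z_1]^\ast$ equals $\pm[X,Z_1]$ after applying $X^\ast=\pm X$ and $Z_1^\ast=-Z_1$, and expanding
\[
\trace{[X,Z_1]\, X}= \trace{X Z_1 X} - \trace{Z_1 X^2}
\]
shows that these two terms cancel by cyclicity of the trace. Hence $\lan [X, Z_1], X\ran = 0$ in both cases, and the criterion is verified.

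The main obstacle here is not mathematical but notational, namely keeping the Hermitian versus skew-Hermitian sign conventions straight when computing $[X, Z_1]^\ast$. Conceptually there is very little depth: the case $X\in \mathfrak{u}_p\oplus\mathfrak{u}_q$ just expresses that the restriction of $\lan\cdot,\cdot\ran$ to the compact subalgebra is $\textup{ad}$-invariant, and so extends to a bi-invariant metric on $U(p)\times U(q)$ (for which all one-parameter subgroups are automatically geodesics), while the case $X\in\mathfrak{n}$ reflects the standard symmetry of the reductive decomposition $\mathfrak{u}_{p,q}=(\mathfrak{u}_p\oplus\mathfrak{u}_q)\oplus \mathfrak{n}$ underlying the Cartan decomposition of $U(p,q)$.
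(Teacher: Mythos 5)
Your proposal is correct and follows essentially the same route as the paper: reduce via the Koszul formula for left-invariant fields to the condition $\lan [X,Z],X\ran=0$, kill the cross terms using $[\mathfrak{u}_p\oplus\mathfrak{u}_q,\mathfrak{n}]\subseteq\mathfrak{n}$, $[\mathfrak{n},\mathfrak{n}]\subseteq\mathfrak{u}_p\oplus\mathfrak{u}_q$ and orthogonality, and dispose of the remaining term by cyclicity of the trace. The only cosmetic difference is that you handle the surviving trace term uniformly via the sign of $[X,Z_1]^\ast$, whereas the paper writes out the block matrices explicitly; both verifications are equivalent.
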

\begin{proof}
The Levi-Civita connection $\nabla$ is given by the Koszul formula
\bgd
2\lan X,\nabla_Z Y\ran = Z\lan X,Y\ran+Y\lan X,Z\ran -X\lan Y,Z\ran +\lan Z,[X,Y]\ran+\lan Y,[X,Z]\ran - \lan X,[Y,Z]\ran. 
\edd
Putting $Z=Y$ and $X$, two left-invariant vector fields, in the above, we obtain
\bgd
\lan X, \nabla_Y Y\ran = \lan Y, [X,Y]\ran.
\edd
To prove out claim, it suffices to show that $\nabla_Y Y=0$, i.e., $\lan Y, [X,Y]\ran=0$ for any $X$. Let us assume that $Y(e)\in\mathfrak{n}$. If $X(e)\in\mathfrak{n}$, then $[X(e),Y(e)]\in \mathfrak{u}_p\oplus\mathfrak{u}_q$ which implies that $\lan Y(e), [X(e),Y(e)]\ran=0$. If $X(e)\in \mathfrak{u}_p\oplus\mathfrak{u}_q$, then
\begin{eqnarray*}
\lan Y, [X,Y]\ran & = &  \lan\left(\begin{array}{cc}
0 & B\\
B^\ast & 0
\end{array}\right), \left(\begin{array}{cc}
0 & AB-BD\\
DB^\ast-B^\ast A & 0
\end{array}\right)\ran\\
& = & \textup{trace}\left(\begin{array}{cc}
B(DB^\ast-B^\ast A) & 0\\
0 & B^\ast(AB-BD)
\end{array}\right)\\
& = & \textup{trace}(BDB^\ast-BB^\ast A)+\textup{trace}(B^\ast AB-B^\ast BD)\\
& = & 0
\end{eqnarray*}
by the cyclic property of trace. Thus, $\nabla_Y Y=0$ if $Y(e)\in\mathfrak{n}$; similar proof works if $Y(e)\in \mathfrak{u}_p\oplus\mathfrak{u}_q$. 
\end{proof}
\begin{rem}
An integral curve of a left-invariant vector field (also called $1$-parameter subgroups) need not be a geodesic in $\upq$. For instance, if $X+Y$ is a left-invariant vector field given by $X(e)\in\mathfrak{u}_p\oplus\mathfrak{u}_q$ and $Y(e)\in \mathfrak{n}$, then $\nabla_{X+Y}(X+Y)=0$ if and only if $\nabla_X Y=\frac{1}{2}[X,Y]$ and $\nabla_Y X=\frac{1}{2}[Y,X]$. This happens if and only if the metric is bi-invariant, i.e.,
\bgd
\lan [X,Z],Y\ran=\lan X, [Z,Y]\ran.
\edd 
This is not true; for instance, with $X(e)\in \mathfrak{u}_p\oplus\mathfrak{u}_q$ and linearly independent $Y(e),Z(e)\in \mathfrak{n}$, we get $\lan [X,Z],Y\ran-\lan X, [Z,Y]\ran\neq 0$.
\end{rem}
\hf Consider the matrix
\bgd
Y=\left(\begin{array}{cc}
0 & B\\
B^\ast & 0
\end{array}\right)\in\mathfrak{n}.
\edd
Let $B=U \sqrt{B^\ast B}$ and $B^\ast =\sqrt{B^\ast B}\,U^\ast$ be polar decompositions, where $U$ and $U^\ast$ are partial isometries. It follows from direct computation that
\begin{eqnarray*}
e^Y & = & \left(\begin{array}{cc}
I_p+\frac{BB^\ast}{2!} + \frac{(BB^\ast)^2}{4!}+\cdots & \frac{B}{1!}+\frac{B (B^\ast  B)}{3!} + \frac{B (B^\ast B)^2 }{5!}+\cdots \\
\frac{B^\ast}{1!}+\frac{(B^\ast B)B^\ast}{3!} + \frac{(B^\ast B)^2 B^\ast}{5!}+\cdots  & I_q+\frac{B^\ast B}{2!} + \frac{(B^\ast B)^2}{4!}+\cdots 
\end{array}\right)\\
& = & \left(\begin{array}{cc}
\cosh(\sqrt{BB^\ast}) & U\sinh(\sqrt{B^\ast B})\\
\sinh(\sqrt{B^\ast B})U^\ast & \cosh(\sqrt{B^\ast B})
\end{array}\right).
\end{eqnarray*}
It can be checked that 
\bgd
e^\mathfrak{n}\cap \left(U(p)\times U(q)\right)=\{I_n\}.
\edd
It is known that the non-zero eigenvalues of $Y$ are the non-zero eigenvalues of $\sqrt{BB^\ast}$ and their negatives. 
\begin{thm}\label{mainthm}
For any element $\mathcal{A}\in \upq$, the associated matrix $\sqrt{\mathcal{A}^\ast\mathcal{A}}$ can be expressed uniquely as $e^Y$ for $Y\in \mathfrak{n}$. Moreover, there is a unique way to express $\mathcal{A}$ as a product of a unitary matrix and an element of $e^\mathfrak{n}$, and it is given by the polar decomposition.
\end{thm}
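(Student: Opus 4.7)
The plan is to combine the classical polar decomposition in $GL_n(\mathbb{C})$ with uniqueness of the Hermitian logarithm, and then use the description of $\mathfrak{n}$ as the Hermitian part of $\mathfrak{u}_{p,q}$ that is already established in the surrounding discussion. Given $\mathcal{A}\in U(p,q)$, write $\mathcal{A}=U|\mathcal{A}|$ with $U$ unitary and $|\mathcal{A}|=\sqrt{\mathcal{A}^*\mathcal{A}}$ positive definite Hermitian. By the preceding lemma $|\mathcal{A}|\in U(p,q)$, so $U=\mathcal{A}|\mathcal{A}|^{-1}\in U(p,q)$, and therefore $U\in U(p,q)\cap U(n)=U(p)\times U(q)$. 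The entire problem thus reduces to producing a unique $Y\in\mathfrak{n}$ with $|\mathcal{A}|=e^Y$.

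To construct $Y$, set $Y:=\log|\mathcal{A}|$, the unique Hermitian logarithm of the positive definite Hermitian matrix $|\mathcal{A}|$ (which exists by the spectral theorem). The condition $|\mathcal{A}|\in U(p,q)$ together with $|\mathcal{A}|^*=|\mathcal{A}|$ reads
\[
e^Y I_{p,q} e^Y = I_{p,q},
\]
so $e^Y = I_{p,q}e^{-Y}I_{p,q} = e^{-I_{p,q}YI_{p,q}}$, where in the last equality we expand $\exp$ as a power series and use $I_{p,q}^2=I_n$. Since $-I_{p,q}YI_{p,q}$ is again Hermitian, uniqueness of the Hermitian logarithm forces $Y=-I_{p,q}YI_{p,q}$, i.e.\ $I_{p,q}Y+YI_{p,q}=0$; combined with $Y^*=Y$ this is precisely the defining relation for $Y\in\mathfrak{u}_{p,q}$. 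A direct $2\times 2$ block computation (identical to the one used earlier to identify $\mathfrak{n}$) now shows that any Hermitian element of $\mathfrak{u}_{p,q}$ has vanishing diagonal blocks, hence lies in $\mathfrak{n}$. Uniqueness of $Y$ with $e^Y=|\mathcal{A}|$ is inherited from uniqueness of the Hermitian logarithm.

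For the second assertion, suppose $\mathcal{A}=Ke^Y$ with $K$ unitary and $Y\in\mathfrak{n}$. Since elements of $\mathfrak{n}$ are Hermitian, $e^Y$ is positive definite Hermitian, so
\[
\mathcal{A}^*\mathcal{A}=e^Y K^*K e^Y=e^{2Y}.
\]
Uniqueness of the positive definite square root gives $e^Y=\sqrt{\mathcal{A}^*\mathcal{A}}=|\mathcal{A}|$, whence $Y$ is uniquely determined and $K=\mathcal{A}e^{-Y}$ is forced; this is exactly the polar decomposition restricted to $U(p,q)$ (in particular $K\in U(p)\times U(q)$).

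The main subtlety I anticipate is the passage from $e^Y\in U(p,q)$ back to $Y\in\mathfrak{u}_{p,q}$, as the exponential map $\exp:\mathfrak{u}_{p,q}\to U(p,q)$ is not globally injective. The argument is rescued by restricting to Hermitian $Y$, where the Hermitian logarithm is globally well defined and unique, so the identity $e^Y=e^{-I_{p,q}YI_{p,q}}$ can be descended to the Lie-algebra level. Everything else is straightforward linear algebra, and the result gives another path to the well-known surjectivity of $\exp$ for $U(p,q)$ used in Corollary~\ref{expsurj}.
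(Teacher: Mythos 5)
Your proof is correct, but it takes a genuinely different route from the paper's. The paper proves $\tfrac{1}{2}\log(\mathcal{A}^\ast\mathcal{A})\in\mathfrak{n}$ by brute force: it first computes $\left[\mathcal{A}^\ast\mathcal{A}+I_n\right]^{-1}$ explicitly (Lemma \ref{inv}) and then expands the Gregory series for the principal logarithm, finding that every term is an odd power of the off-diagonal block matrix $\spmat{0 & A^{-1}B\\ B^\ast(A^\ast)^{-1} & 0}$, which visibly lies in $\mathfrak{n}$. You bypass that computation entirely with a structural argument: since $|\mathcal{A}|=\sqrt{\mathcal{A}^\ast\mathcal{A}}$ is Hermitian positive definite and lies in $U(p,q)$ (by the lemma preceding the theorem), its unique Hermitian logarithm $Y$ satisfies $e^{I_{p,q}YI_{p,q}}=e^{-Y}$, and injectivity of $\exp$ on Hermitian matrices descends this to $YI_{p,q}+I_{p,q}Y=0$; a block computation then identifies the Hermitian part of $\mathfrak{u}_{p,q}$ with $\mathfrak{n}$. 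Your approach is cleaner and makes transparent \emph{why} the logarithm lands in $\mathfrak{n}$ (it is the $(-1)$-eigenspace of $X\mapsto -I_{p,q}X^\ast I_{p,q}$, i.e.\ the Cartan decomposition), and it renders Lemma \ref{inv} unnecessary for the theorem; what the paper's computation buys in exchange is a closed-form expression for $Y$ in terms of the blocks of $\mathcal{A}$, namely as a series in $A^{-1}B$. The uniqueness arguments in the two proofs are essentially the same (reduce to $e^{2Y_1}=e^{2Y_2}$ and invoke injectivity of $\exp$ on Hermitian matrices), with your version slightly more general in allowing the left factor to be any unitary matrix a priori and deducing $K\in U(p)\times U(q)$ a posteriori.
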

In order to prove the result, we discuss some preliminaries on logarithm of complex matrices. In general, there is no unique logarithm. However, the Gregory series
\bgd
\log A=-\sum_{m=0}^{\infty}\frac{2}{2m+1}\left[(I-A)(I+A)^{-1}\right]^{2m+1}
\edd
converges if all the eigenvalues of $A\in M_n(\C)$ have positive real part \cite[\S 11.3, p. 273]{Hig08}. In particular, $\log A$ is well-defined for Hermitian positive-definite matrix. This is often called the \textit{principal logarithm} of $A$. This logarithm satisfies $e^{\log A}=A$. There is an integral form of logarithm that applies to matrices withour real or zero eigenvalues; it is given by
\bgd
\log A =(A-I)\int_0^1 \left[s(A-I)+I\right]^{-1}ds.
\edd
\begin{lmm}\label{inv}
The inverse of $\mathcal{A}^\ast\mathcal{A}+I_n$ for $\mathcal{A}\in \upq$ is given by
\bgd
\left[\mathcal{A}^\ast\mathcal{A}+I_n\right]^{-1}=\frac{1}{2}\left(\begin{array}{cc}
I_p & -A^{-1}B\\
-B^\ast (A^\ast)^{-1} & I_q
\end{array}\right).
\edd
\end{lmm}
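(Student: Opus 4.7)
The plan is to verify the claimed formula by direct multiplication, after first simplifying $\mathcal{A}^\ast\mathcal{A}+I_n$ using the defining relations of $U(p,q)$. Writing $\mathcal{A}=\spmat{A & B\\ C & D}$, the block-wise expansion of $\mathcal{A}^\ast\mathcal{A}$ produces the four blocks $A^\ast A+C^\ast C$, $A^\ast B+C^\ast D$, $B^\ast A+D^\ast C$, $B^\ast B+D^\ast D$. Applying the three $U(p,q)$-identities $A^\ast A-C^\ast C=I_p$, $A^\ast B=C^\ast D$, $B^\ast B-D^\ast D=-I_q$, each off-diagonal summand may be absorbed into its partner, and after adding $I_n$ I expect the clean expression
\[
\mathcal{A}^\ast\mathcal{A}+I_n \;=\; 2\begin{pmatrix} A^\ast A & A^\ast B \\ B^\ast A & D^\ast D \end{pmatrix}.
\]
Since $A^\ast A=I_p+C^\ast C$ is positive definite, $A$ is invertible, so the entries $A^{-1}B$ and $B^\ast (A^\ast)^{-1}$ appearing in the proposed inverse are well-defined.

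For the next step, I will need the "dual" relations obtained from the equivalent condition $\mathcal{A}\,I_{p,q}\mathcal{A}^\ast=I_{p,q}$ (which follows since $\mathcal{A}^{-1}=I_{p,q}\mathcal{A}^\ast I_{p,q}$ gives $\mathcal{A}^\ast=I_{p,q}\mathcal{A}^{-1}I_{p,q}$). The relevant consequence is $AA^\ast-BB^\ast=I_p$, from which the key identity
\[
BB^\ast(A^\ast)^{-1} = AA^\ast(A^\ast)^{-1}-(A^\ast)^{-1} = A - (A^\ast)^{-1}
\]
follows at once.

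The main step is then to multiply the simplified $\mathcal{A}^\ast\mathcal{A}+I_n$ by the candidate inverse and check each block. The top-right block collapses immediately via $A^\ast A \cdot A^{-1}B=A^\ast B$; the bottom-right uses only $D^\ast D-B^\ast B=I_q$; the top-left reduces to $I_p$ using the key identity above; and the bottom-left, which looks most delicate, is handled by first writing $D^\ast D=B^\ast B+I_q$ and then applying the key identity once more to see $D^\ast D\,B^\ast(A^\ast)^{-1}=B^\ast A$. All four blocks will match the identity matrix $I_n$, completing the verification.

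I do not anticipate any serious obstacle here, as the whole argument is purely algebraic once the correct simplification of $\mathcal{A}^\ast\mathcal{A}+I_n$ is in place; the mild subtlety is remembering to use both forms of the $U(p,q)$-defining relations (those coming from $\mathcal{A}^\ast I_{p,q}\mathcal{A}=I_{p,q}$ and those from $\mathcal{A}\,I_{p,q}\mathcal{A}^\ast=I_{p,q}$), since the formula for the inverse is written in terms of $A$ and $B$ only and so the relation $AA^\ast-BB^\ast=I_p$ is indispensable.
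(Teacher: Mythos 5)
Your proposal is correct and follows essentially the same route as the paper: both reduce $\mathcal{A}^\ast\mathcal{A}+I_n$ to $2\left(\begin{smallmatrix} A^\ast A & A^\ast B\\ B^\ast A & D^\ast D\end{smallmatrix}\right)$ via the defining relations and then verify the candidate inverse using the dual relation $AA^\ast-BB^\ast=I_p$ coming from $\mathcal{A}^\ast\in U(p,q)$. The only cosmetic difference is that the paper solves for unknown blocks $E,F,G$ of the inverse while you multiply the candidate directly (and avoid the identities involving $C$ and $D^{-1}$ by substituting $D^\ast D=B^\ast B+I_q$), which is a slight streamlining of the same verification.
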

\begin{proof}
Since $\mathcal{A}^\ast\mathcal{A}$ has only positive eigenvalues, $\mathcal{A}^\ast\mathcal{A}+I_n$ has no kernel. We note that 
\bgd
\mathcal{A}^\ast\mathcal{A}+I_n=\left(\begin{array}{cc}
2C^\ast C+2I_p & 2A^\ast B\\
2B^\ast A & 2B^\ast B+2I_q
\end{array}\right)=\left(\begin{array}{cc}
2A^\ast A & 2A^\ast B\\
2B^\ast A & 2D^\ast D
\end{array}\right).
\edd
The inverse matrix satisfies 
\bgd
\left(\begin{array}{cc}
2A^\ast A & 2A^\ast B\\
2B^\ast A & 2D^\ast D
\end{array}\right)\left(\begin{array}{cc}
E & F\\
F^\ast & G
\end{array}\right)=\left(\begin{array}{cc}
I_p & 0\\
0 & I_q
\end{array}\right).
\edd 
As the matrices are Hermitian, the three constraints that $E,F,G$ must satisfy (and are uniquely determined by) are
\begin{eqnarray*}
E & = & \textstyle{\frac{1}{2}}(A^\ast A)^{-1}-A^{-1}BF^\ast\\
G & = & \textstyle{\frac{1}{2}}(D^\ast D)^{-1}-D^{-1}CF\\
F & = & -A^{-1}BG.
\end{eqnarray*}
We note that $E=\frac{1}{2}I_p$, $G=\frac{1}{2}I_q$ and $F=-\frac{1}{2}A^{-1}B$ satisfy the above equations. For instance, 
\bgd
\textstyle{\frac{1}{2}}(A^\ast A)^{-1}-A^{-1}BF^\ast=\textstyle{\frac{1}{2}}(A^\ast A)^{-1}+\frac{1}{2}A^{-1}BB^\ast (A^{\ast})^{-1}=\textstyle{\frac{1}{2}}(A^\ast A)^{-1}+\frac{1}{2}A^{-1}(AA^\ast-I_p)(A^{\ast})^{-1}=\frac{1}{2}I_p,
\edd
where $BB^\ast=AA^\ast-I_p$ is a consequence of $\mathcal{A}^\ast\in \upq$. Yet another consequence is $AC^\ast=BD^\ast$, which is equivalent to
\bgd
A^{-1}B=(D^{-1}C)^\ast.
\edd
In a similar vein,
\bgd
\textstyle{\frac{1}{2}}(D^\ast D)^{-1}-D^{-1}CF=\textstyle{\frac{1}{2}}(D^\ast D)^{-1}+\frac{1}{2}D^{-1}CC^\ast (D^{\ast})^{-1}=\textstyle{\frac{1}{2}}(D^\ast D)^{-1}+\frac{1}{2}D^{-1}(DD^\ast-I_q)(D^{\ast})^{-1}=\frac{1}{2}I_q,
\edd
where $CC^\ast=DD^\ast-I_q$ is due to $\mathcal{A}^\ast\in \upq$.
\end{proof}
\begin{proof}[Proof of Theorem \ref{mainthm}]
We use Gregory series expansion for computing the principal logarithm of $\mathcal{A}^\ast\mathcal{A}$ along with Lemma \ref{inv}:
\begin{eqnarray*}
\log (\mathcal{A}^\ast\mathcal{A}) & = & \sum_{m=0}^\infty\textstyle{\frac{2}{2m+1}}\left[2\left(\begin{array}{cc}
A^\ast A-I_p & A^\ast B\\
B^\ast A & D^\ast D-I_q
\end{array}\right)
\frac{1}{2}\left(\begin{array}{cc}
I_p & -A^{-1}B\\
-B^\ast (A^\ast)^{-1} & I_q
\end{array}\right)\right]^{2m+1}\\
& = & \sum_{m=0}^\infty\textstyle{\frac{2}{2m+1}}\left(\begin{array}{cc}
0 & A^{-1}B\\
B^\ast (A^\ast)^{-1} & 0
 \end{array}\right)^{2m+1}.
\end{eqnarray*}
We set $Y=\frac{1}{2}\log (\mathcal{A}^\ast\mathcal{A})$. It is clear that $Y\in\mathfrak{n}$ and $e^Y=\sqrt{\mathcal{A}^\ast\mathcal{A}}$. It is known that the exponential map is injective on Hermitian matrices. This implies the uniqueness of $Y$. \\
\hf If $U_1e^{Y_1}=U_2 e^{Y_2}$ are two decompositions of $\mathcal{A}\in\upq$ with $U_i\in \textup{U}(p)\times\textup{U}(q)$ and $Y_i\in\mathfrak{n}$, then 
\bgd
e^{2Y_1}=e^{Y_1}U_1^\ast U_1 e^{Y_1}=e^{Y_2}U_2^\ast U_2 e^{Y_2}=e^{2Y_2}.
\edd
By the injectivity of the exponential map (on Hermitian matrices), we obtain $Y_1=Y_2$, which implies that $U_1=U_2$.
\end{proof}
We infer the following (see \cite{YaSt75} Lemma 1, p. 211 for a different proof) result.
\begin{cor}\label{expsurj}
The exponential map $\textup{exp}:\mathfrak{u}_{p,q}\to U(p,q)$ is surjective.
\end{cor}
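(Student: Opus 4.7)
The plan is to combine Theorem \ref{mainthm} with the surjectivity of the exponential on the compact factor $K:=U(p)\times U(q)$ and then identify a single logarithm of $\mathcal{A}$ inside $\mathfrak{u}_{p,q}$. Given $\mathcal{A}\in U(p,q)$, Theorem \ref{mainthm} yields the polar decomposition $\mathcal{A}=U\cdot e^{Y}$ with $U\in K$ and $Y\in\mathfrak{n}\subset\mathfrak{u}_{p,q}$. Since $K$ is compact and connected, its exponential map is surjective, so $U=e^{X}$ for some $X\in\mathfrak{u}_{p}\oplus\mathfrak{u}_{q}\subset\mathfrak{u}_{p,q}$. This already exhibits $\mathcal{A}=e^{X}\cdot e^{Y}$ as a product of two exponentials of elements of $\mathfrak{u}_{p,q}$; the remaining task is to collapse this product into a single exponential.

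For the generic case in which $\mathcal{A}$ has no eigenvalue on $(-\infty,0]$, I would take $Z:=\log\mathcal{A}$, the principal matrix logarithm defined via the holomorphic functional calculus (equivalently, via the Gregory or Dunford integral formula already used in the proof of Theorem \ref{mainthm}), so that $e^{Z}=\mathcal{A}$. The principal logarithm commutes with conjugation by any invertible matrix and satisfies $(\log B)^{\ast}=\log(B^{\ast})$ whenever both sides are defined. Combined with the defining identity $\mathcal{A}^{\ast}=I_{p,q}\mathcal{A}^{-1}I_{p,q}$ and the property $\log(B^{-1})=-\log B$ (valid under our spectral hypothesis), this gives
\[
Z^{\ast}=\log(\mathcal{A}^{\ast})=\log\bigl(I_{p,q}\mathcal{A}^{-1}I_{p,q}\bigr)=I_{p,q}\log(\mathcal{A}^{-1})I_{p,q}=-I_{p,q}Z\,I_{p,q},
\]
which is precisely the defining relation for $Z\in\mathfrak{u}_{p,q}$.

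The main obstacle lies in the boundary case where $\mathcal{A}$ has an eigenvalue on $(-\infty,0]$, in which the principal logarithm is no longer available. Here I would invoke the multiplicative Jordan decomposition $\mathcal{A}=S\cdot N$ with $S$ semisimple, $N$ unipotent, $SN=NS$ and both $S,N\in U(p,q)$ (by algebraicity of the group). For the unipotent factor, the series $\log N=\sum_{k\geq 1}\frac{(-1)^{k-1}}{k}(N-I)^{k}$ terminates, is a polynomial in $N$, and lies in $\mathfrak{u}_{p,q}$ by the same involutive computation as above. For the semisimple factor, the spectral pairing $(\lambda,1/\overline{\lambda})$ forced by $S\in U(p,q)$ lets one choose branches of $\log$ on each eigenpair compatibly with $I_{p,q}$; the genuinely delicate point is the self-paired eigenvalue $\lambda=-1$, whose generalised eigenspace carries a non-degenerate Hermitian form induced by $I_{p,q}$. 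The choice of $\log(-1)=\pm i\pi$ must be made block-by-block in a decomposition of this eigenspace orthogonal for the induced form, so that the resulting $Z_{S}$ again satisfies $Z_{S}^{\ast}=-I_{p,q}Z_{S}I_{p,q}$. Once this is arranged, $Z:=Z_{S}+\log N\in\mathfrak{u}_{p,q}$ satisfies $e^{Z}=SN=\mathcal{A}$, concluding the proof.
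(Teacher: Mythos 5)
Your proposal is correct in substance but takes a genuinely different route from the paper at the decisive step. The paper, like you, starts from the polar decomposition $\mathcal{A}=\mathcal{A}\big(\sqrt{\mathcal{A}^\ast\mathcal{A}}\big)^{-1}e^{Y}$ of Theorem \ref{mainthm} and the surjectivity of $\exp$ on the compact factor, obtaining $\mathcal{A}=e^{Z}e^{Y}$; it then simply asserts that by the Baker--Campbell--Hausdorff formula one may write $e^{Z}e^{Y}$ as a single exponential of an element of $\mathfrak{u}_{p,q}$. That appeal is the weakest link of the published argument, since BCH is only a local statement and does not by itself collapse a product of two large, non-commuting exponentials into one. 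You replace exactly this step by a direct construction of a logarithm of $\mathcal{A}$ inside $\mathfrak{u}_{p,q}$: in the generic case your computation $Z^{\ast}=\log(\mathcal{A}^{\ast})=-I_{p,q}Z I_{p,q}$ is correct and complete (and, notably, is self-contained --- it does not actually use Theorem \ref{mainthm} or the factorization $e^{X}e^{Y}$ set up in your first paragraph), while the exceptional spectrum is handled via the multiplicative Jordan decomposition. The only part left as a sketch is the eigenvalue $-1$ of the semisimple factor $S$, and there your worry about a block-by-block branch choice is unnecessary: the $(-1)$-eigenspace $V_{-1}$ of $S$ is nondegenerate for the form induced by $I_{p,q}$, and the uniform choice $Z_{S}|_{V_{-1}}=i\pi\,\mathrm{id}$ satisfies $X^{\ast}J+JX=0$ for \emph{any} Hermitian $J$, keeps $Z_{S}$ a function (indeed a polynomial) of $S$, and hence commutes with $\log N$. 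With that observation your argument closes; it is longer than the paper's one-line BCH appeal, but it is rigorous precisely where the paper is informal, and it recovers the classical statement (the paper's cited alternative being \cite{YaSt75}).
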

\begin{proof}
Using the polar decomposition and Theorem \ref{mainthm}, 
\bgd
\mathcal{A}=\mathcal{A}\big(\sqrt{\mathcal{A}^\ast\mathcal{A}}\big)^{-1}\sqrt{\mathcal{A}^\ast\mathcal{A}}=\mathcal{A}\big(\sqrt{\mathcal{A}^\ast\mathcal{A}}\big)^{-1}e^Y.
\edd
Since the matrix exponential is surjective for $U(p)\times U(q)$, choose $Z\in \mathfrak{u}_p\oplus\mathfrak{u}_q$ such that $e^Z=\mathcal{A}(\sqrt{\mathcal{A}^\ast\mathcal{A}})^{-1}$. By Baker-Campbell-Hausdorff formula, we may express $e^Z e^Y$ as exponential of an element in $\mathfrak{u}_{p,q}$. 
\end{proof}
\hf The distance from any matrix $\mathcal{A}\in \upq$ to $U(p)\times U(q)$ is given by the length of the curve
\bgd
\gamma(t)=\mathcal{A}\big(\sqrt{\mathcal{A}^\ast\mathcal{A}}\big)^{-1}e^{tY},
\edd
which can be computed (and simplified via left-invariance) as follows
\bgd
\ell(\gamma)=\int_0^1 \|\gamma'(t)\|_{\gamma(t)}\,dt=\int_0^1 \|Y\|\,dt=\|Y\|.
\edd
Note that 
\bgd
\|Y\|^2=\textup{trace}(Y^\ast Y)=\textup{trace}\left[\textstyle{\frac{1}{4}}(\log (\mathcal{A}^\ast\mathcal{A}))^2\right].
\edd
Thus, the distance squared function is given by 
\bgd
d^2:\upq\to\R,\,\,\mathcal{A}\mapsto \textstyle{\frac{1}{4}}\textup{trace}\left[\left(\log (\mathcal{A}^\ast\mathcal{A})\right)^2\right].
\edd


\appendix

\section{The continuity of the map $\mathpzc{s}$} \label{Sec: cts-s}

Recall the statement of Proposition \ref{snucts}.
\begin{prpn}
The map $\mathpzc{s}:S(\nu)\to [0,\infty)$, as defined in \eqref{snu}, is continuous.
\end{prpn}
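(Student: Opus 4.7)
The plan is to establish continuity by proving upper and lower semi-continuity separately. Throughout, since $M$ is compact, $\mathpzc{s}$ takes values in a bounded interval, and since $N$ is compact, the unit normal bundle $S(\nu)$ is compact. The key tool is a characterization of $\mathpzc{s}(v)$ analogous to the classical criterion for $s(v)$ (this will be Lemma~\ref{chsnu}): $\mathpzc{s}(v) = T$ iff $\gamma_v\big|_{[0,T]}$ is an $N$-geodesic and at least one of (i) $\gamma_v(T)$ is the first focal point of $N$ along $\gamma_v$, or (ii) there exists $u \in S(\nu)$ with $u \neq v$ such that $\gamma_u\big|_{[0,T]}$ is an $N$-geodesic with $\gamma_u(T) = \gamma_v(T)$, holds.

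For \emph{upper semi-continuity}, given $v_n \to v$, I would pass to a subsequence $v_{n_k}$ with $\mathpzc{s}(v_{n_k}) \to T^* = \limsup_n \mathpzc{s}(v_n)$. The identity $d\bigl(\exp_\nu(\mathpzc{s}(v_{n_k})\, v_{n_k}), N\bigr) = \mathpzc{s}(v_{n_k})$ passes to the limit by continuity of $\exp_\nu$ and of the distance function, giving $d(\exp_\nu(T^* v), N) = T^*$. Hence $\gamma_v\big|_{[0,T^*]}$ is an $N$-geodesic, so $\mathpzc{s}(v) \ge T^*$.

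For \emph{lower semi-continuity}, suppose for contradiction that $\mathpzc{s}(v_{n_k}) \to T^* < \mathpzc{s}(v)$ along some subsequence. Apply Lemma~\ref{chsnu} at each $v_{n_k}$ and, after extracting a further subsequence, assume either (i) holds for all $k$ or (ii) holds for all $k$. In case (i), $d(\exp_\nu)_{\mathpzc{s}(v_{n_k})\, v_{n_k}}$ is singular; continuity of this derivative shows $d(\exp_\nu)_{T^* v}$ is singular, so $\gamma_v(T^*)$ is a focal point of $N$, which forces $\mathpzc{s}(v) \le T^*$, a contradiction. In case (ii), compactness of $S(\nu)$ yields a limit $u_{n_k} \to u \in S(\nu)$. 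If $u \ne v$, then $\gamma_u\big|_{[0,T^*]}$ and $\gamma_v\big|_{[0,T^*]}$ are distinct $N$-geodesics meeting at $\gamma_v(T^*)$, so $\gamma_v(T^*) \in \sen \subseteq \mathrm{Cu}(N)$ and again $\mathpzc{s}(v) \le T^*$, contradiction. If $u = v$, then $\exp_\nu$ sends two sequences of distinct points in $\nu$ converging to the common point $T^* v$ to the same image, which forces $d(\exp_\nu)_{T^* v}$ to be singular, reducing this subcase to (i).

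The main obstacle is the subcase $u = v$ in case (ii): one must invoke a submanifold version of the classical focal-point criterion (cf.~\cite[Exercise 23, p.~241]{BiCr64}), namely that if two sequences of distinct unit normal vectors converge to the same limit $v$ but their $\exp_\nu$-images coincide along a convergent parameter, then the limit endpoint is a focal point. Establishing this Jacobi-field argument in Lemma~\ref{chsnu} is the heart of the matter; once it is in hand, the rest of the argument mirrors the point case of Sakai \cite[Proposition~4.1]{Sak96} via routine compactness and continuity considerations.
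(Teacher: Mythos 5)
Your proposal is correct and follows essentially the same route as the paper: upper semi-continuity via passing the $N$-geodesic property to the limit (the paper's Observation B), and lower semi-continuity via the characterization lemma (the paper's Lemma \ref{chsnu}) with the same three-way case analysis — focal points, a distinct limiting normal vector $u\neq v$, and the degenerate subcase $u=v$ handled by local injectivity of the normal exponential (the paper uses the map $\Phi(p,u)=(p,\exp_\nu(p,u))$ for this). You also correctly locate the real work in the proof of the characterization lemma itself, which the paper likewise establishes separately before the proposition.
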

\noindent The proof relies on a characterization of $\mathpzc{s}(v)$.
\begin{lmm}\label{chsnu}
Let $u\in S_p(\nu)$. A positive real number $T$ is $\mathpzc{s}(u)$ if and only if $\gamma_u:[0,T]$ is an $N$-geodesic and at least one of the following holds:\\
	\hf (i) $\gamma_u(T)$ is the first focal point of $N$ along $\gamma_u$,\\
	\hf (ii) there exists $v\in S(\nu)$ with $v\neq u$ such that $\gamma_v(T)=\gamma_u(T)$.
\end{lmm}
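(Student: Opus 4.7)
The plan is to mimic the classical proof characterizing $s(v)$ for a single point (cf.\ Proposition~\ref{scts}), replacing conjugate points by focal points of $N$ to accommodate the submanifold setting.

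First I would handle the ``if'' direction. Trivially $\mathpzc{s}(u)\ge T$ whenever $\gamma_u|_{[0,T]}$ is an $N$-geodesic, so it suffices to show $\mathpzc{s}(u)\le T$ under either (i) or (ii). If (ii) holds, then $\gamma_v|_{[0,T]}$ also realizes $d(\gamma_u(T),N)=T$ (both have length $T$ and both terminate at $\gamma_u(T)$), so $\gamma_v|_{[0,T]}$ is an $N$-geodesic as well. For each $\varepsilon>0$, concatenating $\gamma_v|_{[0,T]}$ with $\gamma_u|_{[T,T+\varepsilon]}$ produces a broken curve from $N$ to $\gamma_u(T+\varepsilon)$ of length $T+\varepsilon$ with a genuine corner at $\gamma_u(T)$ (since $v\neq u$ forces the one-sided velocities to disagree). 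A standard corner-cutting perturbation strictly decreases length, so $d(\gamma_u(T+\varepsilon),N)<T+\varepsilon$ and hence $\mathpzc{s}(u)\le T$. If (i) holds, I would invoke the submanifold analog of the classical Morse index/Jacobi argument: beyond its first focal point, a normal geodesic from $N$ cannot minimize distance to $N$. Concretely, a non-trivial $N$-Jacobi field along $\gamma_u$ vanishing at $T$ yields a proper variation through curves starting on $N$ whose length is strictly less than $T+\varepsilon$ at second order, giving again $\mathpzc{s}(u)\le T$.

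For the ``only if'' direction, suppose $T=\mathpzc{s}(u)$ and that neither (i) nor (ii) holds; I would derive a contradiction. First note that $\gamma_u|_{[0,T]}$ is an $N$-geodesic: each restriction $\gamma_u|_{[0,T-\delta]}$ is minimizing, and by continuity of $d(\cdot,N)$ and completeness, the limit curve realizes $d(\gamma_u(T),N)=T$. Choose $t_n\downarrow 0$. By definition of $\mathpzc{s}(u)$, for every $n$ the curve $\gamma_u|_{[0,T+t_n]}$ fails to be an $N$-geodesic, so there exists an $N$-geodesic $\sigma_n$ from some $p_n\in N$ to $\gamma_u(T+t_n)$ with initial unit normal $v_n\in S(\nu)$ and length $T_n=d(\gamma_u(T+t_n),N)<T+t_n$. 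Since $M$ is closed, $N$ is compact and so is $S(\nu)$, so a subsequence $v_{n_k}\to v\in S(\nu)$. Passing to the limit in $\exp_\nu(T_{n_k}v_{n_k})=\gamma_u(T+t_{n_k})$ and using $T_{n_k}\to T$ gives $\gamma_v(T)=\gamma_u(T)$. If $v\neq u$, we are in case (ii), a contradiction. If $v=u$ (equal also as based vectors in $\nu$), then in a neighborhood of $(p,Tu)\in \nu$ the map $\exp_\nu$ is a local diffeomorphism because (i) fails; for large $k$ both $(p_{n_k},T_{n_k}v_{n_k})$ and $(p,(T+t_{n_k})u)$ lie in this neighborhood and have the same image, so they must coincide, forcing $\sigma_{n_k}=\gamma_u|_{[0,T+t_{n_k}]}$ and contradicting the fact that $\sigma_{n_k}$ is an $N$-geodesic while $\gamma_u|_{[0,T+t_{n_k}]}$ is not.

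The principal obstacle is the focal-point shortening step in case (i): unlike the point-cut-locus setting, the second-variation variation must keep its initial endpoint on $N$ (not fixed at a single point), so one must combine an $N$-Jacobi field vanishing at $T$ with an appropriate tangential extension along $N$ to produce an admissible variation through $N$-based curves of strictly smaller length. This is the submanifold analogue of the classical conjugate-point argument and, once set up carefully using Fermi coordinates and the second variation formula for the $N$-based energy, follows the standard Morse-theoretic template. The remaining steps are routine compactness and local-diffeomorphism arguments, already outlined above.
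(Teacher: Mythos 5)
Your proposal is correct and follows essentially the same route as the paper: the corner-cutting/triangle-inequality argument for case (ii), the limiting-sequence-plus-local-diffeomorphism argument for the converse, and a reduction of case (i) to the fact that a normal geodesic cannot minimize past its first focal point. The only difference is that the paper simply cites this last fact as a known result (Sakai, Lemma 2.11), whereas you flag it as the main technical point and sketch the second-variation argument behind it.
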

\noindent Note that $\gamma_u(T)$ being a focal point of $N$ along $\gamma_u$ means that $(D\exp_\nu)(uT)$ is not of full rank, where $\exp_\nu$ is the normal exponential as defined in \eqref{norexp}. When $N$ is a point, then this notion of focal points reduces to that of conjugate points.\\
\noindent In order to prove the Lemma, we need the following observations.\\[0.2cm]
\textbf{Observation A}\,\textup{\cite[Lemma 2.11, p. 96]{Sak96}}\,\,\textit{Let $N$ be a submanifold of $M$ and $\gamma:[a,\infty)\to M$ a geodesic emanating perpendicularly from $N$. If $\gamma(b)$ is the first focal point of $N$ along $\gamma$, then for $t>b$, $\gamma|_{[a,t]}$ cannot be an $N$-geodesic, i.e., $L\paran{\gamma|_{[a,t]}}>d(N,\gamma(t))$.}\\[0.2cm]
\noindent Recall that a sequence $\{\gamma_n\}$ of geodesics, defined on closed intervals, is said to converge to a geodesic $\gamma$ if $\gamma_{n}(0)\to \gamma(0)$ and $\gamma_n'(0)\to \gamma'(0)$. It follows from the continuity of the exponential map that if $t_n\to t$, then $\gamma_n(t_n)\to \gamma(t)$.\\[0.2cm]
\textbf{Observation B}\,\,\textit{Let $\gamma_n$ be unit speed $N$-geodesics joining $p_n=\gamma_n(0)$ to $q_n=\gamma_n(t_n)$. If $\gamma_n$ converges to a geodesic $\gamma$ and $t_n\to l$, then $\gamma$ is a unit speed $N$-geodesic joining $p=\lim_n p_n$ to $q \defeq \gamma(l)=\lim_n \gamma_n(t_n)$.}
\begin{proof}
The unit normal bundle $S(\nu)$ is closed. Since $\gamma_n'(0)\to \gamma'(0)$, it follows that $\gamma'(0)\in S(\nu)$. Note that
	\begin{displaymath}
	d(N,q) = \lim_{n\to \infty} d(N,q_n) = \lim_{n\to \infty} d(p_n,q_n)= \lim_{n\to \infty} t_n = l = L\paran{\gamma|_{[0,l]}}
	\end{displaymath}
implies that $\gamma$ is a $N$-geodesic.
\end{proof}
\begin{proof}[Proof of the Lemma A.2]
If $\gamma_u(t)$ is the first focal point of $N$ along $\gamma_u$, then Observation A implies that $\gamma_u$ cannot be minimal beyond this value. If (ii) holds, then we need to show that for sufficiently small $\varepsilon>0~$ $\gamma_u|_{[0,T+\varepsilon]}$ is not minimal. Suppose, on the contrary, that $\gamma_u$ is minimal beyond $T$. Take a minimal geodesic $\beta$ joining $\gamma_{v}(T-\varepsilon)$ to $\gamma_{u}(T+\varepsilon)$. 
Observe that,
	\begin{align*}
	2\varepsilon & = d\paran{\gamma_{u}(T+\varepsilon),\gamma_u(T)} + d\paran{\gamma_v(T),\gamma_v(T-\varepsilon)}> d\paran{\gamma_u(T+\varepsilon),\gamma_v(T-\varepsilon)}.
	\end{align*}
If $p,q,r\in M$ such that $d(p,q)+d(q,r) = d(p,r)$ and there exist shortest normal geodesic $\gamma_{1}$ and $\gamma_{2}$ joining $p$ to $q$ and $q$ to $r$, respectively, then $\gamma_{1}\cup \gamma_2$ is smooth at $q$ and defines a shortest normal geodesic joining $p$ to $r$. Therefore, we have
	\begin{displaymath}
			L(\gamma_v|_{[0,T-\varepsilon]}\cup \beta) = T-\varepsilon + d(\gamma_v(T-\varepsilon),\gamma_u(T+\varepsilon)) < T + \varepsilon = L(\gamma_u|_{[0,T+\varepsilon]}).
	\end{displaymath}
This contradiction establishes that $\gamma_u|_{[0,T+\varepsilon]}$ is not minimal.\\
\noindent \hf For the converse, set $T=\mathpzc{s}(u)$ and observe that $\gamma_u|_{[0,T]}$ is an $N$-geodesic. Assuming that $q\defeq \gamma_{u}(T)$ is not the first focal point of $N$ along $\gamma_{u}$, we will prove that (ii) holds. Let $p=\gamma_u(0)$ and choose a neighbourhood $\tilde{U}$ of $Tu$ in $\nu$ such that $\exp_{\nu}|_{\tilde{U}}$ is a diffeomorphism. For sufficiently large $n$, $q_{n}\defeq \gamma_{u}\paran{T+1/n}\in \exp_{\nu}(\tilde{U})$. Take $N$-geodesics $\gamma_{n}$ parametrized by arc-length joining $p_n$ to $q_{n}$ and set $u_{n}\defeq \dot{\gamma}_{n}(0)\in S((T_{p_n}N)^\perp)$. Since $S((T_{p_n}N)^\perp)$ is compact, by passing to a subsequence, we may assume that $u_{n}$ converges to $v\in S(N_{p})$. By Observation B, 
	\begin{displaymath}
	\gamma_v(T) = \lim_{n\to \infty} \gamma_{u_n}\paran{T+\textstyle{\frac{1}{n}}} = \gamma_u(T).
	\end{displaymath}
If $v = u$, then for sufficiently large $n$, $d(p,q_n)u_{n} \in \tilde{U}$, whence 
	\begin{displaymath}
	\paran{T+\textstyle{\frac{1}{n}}}u = d(p,q_n)u_n.
	\end{displaymath}
Taking absolute values on both sides imply $T+1/n>d(p,q_n)$. This contradiction implies $v\neq u$.
\end{proof}
\begin{proof}[Proof of the Proposition A.1]
We will prove that $\mathpzc{s}(u_n)\to \mathpzc{s}(u)$ whenever $(p_n,u_n)\to (p,u)$ in the unit normal bundle $S(\nu)$. Let $T$ be any accumulation point of the sequence $\{\mathpzc{s}(u_n)\}$ including $\infty$. By Observation B, $\gamma_{u}|_{[0,T]}$ is an $N$-geodesic and hence $T\le \mathpzc{s}(u)$. If $T=+\infty$ we are done. So let us assume that $T<+\infty$. From Lemma A.2, at least one of the following holds for infinitely many $n$:\\
\hf (i) $\mathpzc{s}(u_n)$ is the first focal point to $N$ along $\gamma_{u_n}$\\
\hf (ii) there exist $v_{n}\in S(N_{p_n})$,  $v_{n}\neq u_n$ with $\gamma_{u_n}\paran{\mathpzc{s}(u_n)} = \gamma_{v_n}\paran{\mathpzc{s}(u_n)}$.\\
\hf If (i) is true for infinitely many $n$, then choose infinitely many unit vectors $\{w_n\}$ which belong to the kernel $\ker \paran{D\exp_{\nu}(\mathpzc{s}(u_n)u_n)}$ and are contained in a compact subset of $S(\nu)$. Choose a convergent subsequence whose limit $w$ is contained in $\ker \paran{D\exp_{\nu}{(Tu)}}$. Since $w\neq 0$, the rank of $D\exp_{\nu}{(Tu)}$ is less than $\dim M$. Thus, $\gamma_{u}(T)$ is the first focal point of $N$ along $\gamma_u$ and $T=\mathpzc{s}(u)$.\\
\hf If (ii) is true for infinitely many $n$, then we may assume that $v_{n}\to v\in S(\nu)$. If $v\neq u$, then Lemma A.2 (ii) holds for $T$, whence $T=\mathpzc{s}(u)$. If $v=u$, we claim that $\gamma_{u}(T)$ is the first focal point of $N$ along $\gamma_{u}$. If not, then the map $\exp_{\nu}$ is regular at $Tu\in \nu$ and hence the map 
\bgd
\Phi:\nu\to M\times M,~(p,u)\mapsto (p,\exp_\nu(p,u))
\edd
is regular at $Tu$. Therefore, $\Phi$ is a diffeomorphism if restricted to an open neighbourhood $\tilde{U}$ of $Tu$ in $\nu$. Since $v=u$, which implies for sufficiently large $n$,~ $(p_n,\mathpzc{s}(u_n)u_n)$ and $(p_n,\mathpzc{s}(u_n)v_n)$ belong to $\tilde{U}$ and are different. On the other hand, by assumption $\Phi(\mathpzc{s}(u_n)u_n) = \Phi(\mathpzc{s}(u_n)v_n) $, which is a contradiction. Therefore, $\gamma_u(T)$ is the first focal point and $T=\mathpzc{s}(u)$. 
\end{proof}

\section{Derivative of the square root map}\label{Sec: der-app}

\begin{lmm}\label{lemma: A.1}
Let $A$ be a positive definite matrix and $\psi:A\mapsto \sqrt{A}$. Then 
    \begin{equation*}\label{eq: Appendix-sqrtderivative}
        D\psi_A(H) = \int_0^\infty e^{-t\sqrt{A}}He^{-t\sqrt{A}}~dt,
    \end{equation*}
for any symmetric matrix $H$.
\end{lmm}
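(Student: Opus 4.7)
The idea is to differentiate the defining identity of the square root implicitly, obtaining a Sylvester equation for $D\psi_A(H)$, and then verify that the proposed integral formula is the (unique) solution.

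First, I would argue that $\psi$ is smooth on the open cone of positive definite matrices. One way is to invoke the holomorphic functional calculus: since the spectrum of $A$ is contained in a bounded region of $(0,\infty)$, and $\sqrt{z}$ is holomorphic on a neighbourhood of this spectrum, the map $\psi$ is smooth nearby. Concretely, for $s$ small, $A+sH$ remains positive definite, so $X(s):=\sqrt{A+sH}$ is well defined and smooth in $s$, and satisfies the identity $X(s)^2 = A+sH$.

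Next, I would differentiate this identity at $s=0$. Writing $B:=\sqrt{A}$ and $X'(0)=D\psi_A(H)$, Leibniz gives
\begin{equation*}
X'(0)\,B + B\,X'(0) \;=\; H.
\end{equation*}
This is a Sylvester equation of the form $BY + YB = H$. Because $B$ is positive definite (in particular $\mathrm{spec}(B)\subset (0,\infty)$), the operator $Y\mapsto BY+YB$ on $M(n,\mathbb{R})$ has all eigenvalues of the form $\lambda_i+\lambda_j>0$, hence is invertible; so the Sylvester equation has a unique solution, and it must be $D\psi_A(H)$.

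The remaining task is to verify that $Y:=\int_0^\infty e^{-tB}He^{-tB}\,dt$ solves the equation. Since $B$ commutes with $e^{-tB}$, I compute
\begin{equation*}
\frac{d}{dt}\bigl(e^{-tB}He^{-tB}\bigr) \;=\; -B\,e^{-tB}He^{-tB} \;-\; e^{-tB}He^{-tB}\,B.
\end{equation*}
Integrating from $0$ to $\infty$ and using that $e^{-tB}\to 0$ as $t\to\infty$ (again because $B$ is positive definite, so the integral converges absolutely), the fundamental theorem of calculus yields
\begin{equation*}
-H \;=\; \bigl[e^{-tB}He^{-tB}\bigr]_0^\infty \;=\; -\bigl(BY + YB\bigr),
\end{equation*}
that is $BY+YB=H$, as required. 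By uniqueness, $D\psi_A(H)=Y$.

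The main obstacle is essentially bookkeeping: one must (a) justify the smoothness of $\psi$ and the legitimacy of differentiating $X(s)^2=A+sH$, and (b) justify the convergence of the improper integral and the interchange of $\frac{d}{dt}$ with $\int$, which both follow at once from the operator norm estimate $\|e^{-tB}He^{-tB}\|\le \|H\|e^{-2\lambda_{\min}(B)t}$. Everything else is the short algebraic identity above.
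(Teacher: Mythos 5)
Your proof is correct and follows essentially the same route as the paper's: differentiate $\psi(A)^2=A$ to obtain the Sylvester equation $B\,D\psi_A(H)+D\psi_A(H)\,B=H$ with $B=\sqrt{A}$, verify that the integral solves it via the fundamental theorem of calculus, and conclude by uniqueness. If anything, your version is slightly tighter, since you make explicit why the Sylvester operator $Y\mapsto BY+YB$ is invertible (its eigenvalues are $\lambda_i+\lambda_j>0$), a point the paper leaves implicit in its appeal to ``uniqueness of the derivative.''
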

\begin{proof}
As $\psi(A)\cdot \psi(A) = A$, differentiating at $A$ we obtain
    \begin{equation}\label{eq: sylvester}
         D\psi_A(H)\psi(A) + \psi(A)D\psi_A(H) = H.
    \end{equation}
We will show the following:
    \begin{enumerate}
        \item [(i)] For any positive definite matrix $X$ and for any symmetric matrix $Y$ the integral 
        \begin{equation}\label{eq: matsqrtderivative}
            \int_0^\infty e^{-tX}Ye^{-tX}~\mathrm{d}t
        \end{equation}
converges. Since $X$ is positive definite, we can diagonalize it. Write $X = UDU^T$, where $U$ is an orthogonal matrix and $D$ is a diagonal matrix with the (positive) eigenvalues of $X$ in the diagonal. If $\lambda_1,\lambda_2,\cdots,\lambda_n$ are the (possibly repeated) eigenvalues of $X$, then
        \begin{align*}
            e^{-tX} & = Ue^{-tD}U 
                    \\
                    & = U \begin{pmatrix}
                        e^{-t\lambda_1} & & & \\
                                        & e^{-t\lambda_2} & & \\ 
                                        & & \ddots &  \\
                                        & & & e^{-t\lambda_n}
                    \end{pmatrix} U^T.
        \end{align*}
        Now for any $n\in \bb{N}$, 
        \begin{align*}
            \int_0^ne^{-tX}Ye^{-tX}~\mathrm{d}t & = \int_0^n \paran{Ue^{-tD}U^T} Y  \paran{Ue^{-tD}U^T}~\mathrm{d}t
            \\
            & = U\int_0^ne^{-tD}U^TYUe^{-tD}U^T~\mathrm{d}t
            \\
            & = (UU^T)Y U\int_0^ne^{-tD}e^{-tD}U^T~\mathrm{d}t
            \\
            & = YU\bigg(\int_0^ne^{-2tD}~\mathrm{d}t\bigg)U^T.
        \end{align*}
Since $D$ is a diagonal matrix with positive eigenvalues, the above integral converges and hence the integral (\ref{eq: matsqrtderivative}) converges.
        \item [(ii)] $D\psi_A(H)$ satisfies \eqref{eq: sylvester}. Observe that 
        \begin{align*}
              & \paran{\int_0^\infty e^{-t\sqrt{A}}\cdot H\cdot e^{-t\sqrt{A}} \mathrm{d}t } \sqrt{A} + \sqrt{A} \paran{\int_0^\infty e^{-t\sqrt{A}}\cdot H\cdot e^{-t\sqrt{A}}~\mathrm{d}t} 
            \\
            = & \int_0^\infty\paran{e^{-t\sqrt{A}}\cdot H\cdot e^{-t\sqrt{A}}  \sqrt{A} + \sqrt{A} e^{-t\sqrt{A}}\cdot H\cdot e^{-t\sqrt{A}}}~\mathrm{d}t
            \\
            = & \int_0^\infty\paran{e^{-t\sqrt{A}} H e^{-t\sqrt{A}}}'~\mathrm{d}t = H
        \end{align*}
    \end{enumerate}
    From (i), (ii) and the uniqueness of the derivative, the lemma is proved. 
\end{proof}

\begin{lmm}\label{lemma: A.2}
The map $g : M(n,\rbb)\to \rbb, ~A\mapsto \trace{\sqrt{A^TA}}$ is differentiable if and only if $A$ is invertible. 
\end{lmm}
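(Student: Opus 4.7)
The plan is to establish the two implications separately. For the ``if'' direction, the strategy is to realize $g$ as a composition $g = \operatorname{tr}\circ \psi \circ \phi$, where $\phi: M(n,\R)\to \mathrm{Sym}(n,\R)$ sends $A\mapsto A^T A$ and $\psi$ is the matrix square root. The map $\phi$ is everywhere smooth (polynomial entries), and by Lemma \ref{lemma: A.1} the map $\psi$ is differentiable on the open set of positive definite matrices. Since $A\in GL(n,\R)$ iff $A^T A$ is positive definite, the chain rule gives differentiability of $g$ at every invertible $A$.

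For the ``only if'' direction, suppose $A\in M(n,\R)$ is singular with $\operatorname{rank}(A)=k<n$. The plan is to exhibit an explicit direction in which the directional derivative of $g$ at $A$ fails to exist. Using the singular value decomposition, write $A=UDV^T$ with $U,V\in O(n,\R)$ and $D=\operatorname{diag}(\sigma_1,\ldots,\sigma_k,0,\ldots,0)$ where $\sigma_1\ge\cdots\ge \sigma_k>0$. Let $E_{nn}$ denote the matrix with a $1$ in the $(n,n)$ entry and zeros elsewhere, and set $H:=UE_{nn}V^T$. Then, for any $t\in\R$,
\begin{equation*}
(A+tH)^T(A+tH) = V\bigl(D+tE_{nn}\bigr)^T\!\bigl(D+tE_{nn}\bigr)V^T = V\operatorname{diag}(\sigma_1^2,\ldots,\sigma_k^2,0,\ldots,0,t^2)V^T,
\end{equation*}
so that
\begin{equation*}
g(A+tH) = \trace{\sqrt{(A+tH)^T(A+tH)}} = \sigma_1+\cdots+\sigma_k+|t| \;=\; g(A)+|t|.
\end{equation*}
Since $t\mapsto |t|$ is not differentiable at $0$, the restriction of $g$ to the affine line $A+tH$ is not differentiable at $t=0$, and hence $g$ itself is not differentiable at $A$.

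I expect the forward direction to be essentially immediate once Lemma \ref{lemma: A.1} is in hand. The minor subtlety in the reverse direction is just the use of SVD when $A$ is singular, but this is standard and the computation of $g(A+tH)$ above is completely elementary because the perturbation $tH$ activates exactly one of the previously vanishing singular values, so that the singular values depend on $t$ through $|t|$ rather than through a smooth function. This non-differentiability of $x\mapsto \sqrt{x}$ at $x=0$ is the underlying obstruction, and the role of $H$ is merely to isolate that one-dimensional obstruction inside the matrix setting.
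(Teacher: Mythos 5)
Your proof is correct and follows essentially the same route as the paper: the ``if'' direction is the chain rule applied to $A\mapsto A^TA\mapsto \sqrt{A^TA}\mapsto \mathrm{tr}$, and the ``only if'' direction uses the singular value decomposition to exhibit a direction along which $g$ behaves like $g(A)+|t|$. The only (cosmetic) differences are that the paper establishes differentiability of the square root on positive definite matrices via the inverse function theorem (showing $X\mapsto X^2$ is a diffeomorphism) rather than by citing Lemma \ref{lemma: A.1}, and it perturbs by a rank-$k$ matrix giving $k|t|$ where your rank-one perturbation $UE_{nn}V^T$ gives $|t|$; your version is, if anything, slightly cleaner.
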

\begin{proof}
Let $A$ be an invertible matrix. We will prove that the function $g$ is differentiable at $A$. Let $\cali{P}$  be the set of all positive definite matrices which is an open subset of the set of all symmetric matrices $\cali{S}$. We will prove that the map 
	\begin{displaymath}
		r : \cali{P}\to \cali{P}, ~ A\mapsto \sqrt{A}
	\end{displaymath}
	is differentiable. Define a function 
	\begin{displaymath}
		s : \cali{P}\to \cali{P},~ A\mapsto A^2.
	\end{displaymath}
	We will show that $s$ is a diffeomorphism and from the inverse function theorem $r$ will be differentiable. In order to show that $s$ is a diffeomorphism, we claim that for $A\in \cali{P},~Ds_{A}:T_{A}\cali{P}\to T_{A^2}\cali{P}$ is injective. Note that $\cali{P}$ is an open subset of a vector space $\cali{S}$ and therefore, $T_{A}\cali{P}\isom \cali{S} \isom T_{A^2}\cali{P}.$ So, take $B\in \cali{S}$ such that $Ds_{A}(B) = 0$. We will show that $B = 0.$ Recall that $Ds_{A}(B) = AB +BA.$ Now choose an orthonormal basis $\{\vbf_1,\vbf_2,\cdots,\vbf_n\}$ of eigenspace of $A$ and $A\vbf_{i} = \lambda_{i} \vbf_i$ ($\lambda_i>0$). Then,
	\begin{displaymath}
	 	A(B\vbf_i) = -BA\vbf_i = -B\lambda_i\vbf_i = -\lambda i(B\vbf_i)
	 \end{displaymath} 
	 which implies $Bv_{i}$ is also an eigenvector of $A$ with eigenvalue $-\lambda_{i}<0$. Hence, $Bv_{i} = 0$ which implies $B=0$.\\
\hf For the converse, we will show that if $A$ is a singular matrix, then the map $g$ is not directional differentiable. Let $A$ be a singular matrix. Using the singular value decomposition, we write 
     \begin{displaymath}
         A = U \begin{pmatrix}
             D & 0 \\ 
             0   & 0_k
         \end{pmatrix} V^T,    
     \end{displaymath}
where $D$ is a $(n-k)\times (n-k)$ diagonal matrix with positive entries. If 
     \begin{displaymath}
         B = U \begin{pmatrix}
             0_{n-k} & 0 \\
             0 & I_k
         \end{pmatrix}    
     \end{displaymath}
then we claim that $g$ is not differentiable in the direction of $B$. Since 
     \begin{displaymath}
         \sqrt{(A+tB)^T(A+tB)} = V \begin{pmatrix}
             D & 0 \\
             0 & I_k|t|
         \end{pmatrix} V^T
     \end{displaymath}
the limit 
     \begin{align*}
         \lim_{t\to 0} \dfrac{g(A+tB) - g(A)}{t} & = \lim_{t\to 0} \dfrac{\trace{V\begin{pmatrix}
             D & 0 \\
             0 & I_k|t|
         \end{pmatrix} V^T} - \trace{V \begin{pmatrix}
             D & 0 \\
             0 & 0_k
         \end{pmatrix} V^T}}{t}\\
         & = k \lim_{t\to 0} \dfrac{|t|}{t} 
     \end{align*}
does not exist and hence the function $g$ is not differentiable.
\end{proof}


\end{document}